\documentclass [a4paper,11pt]{amsart}
\usepackage{amsmath,amssymb,amscd,mathrsfs}
\usepackage{enumerate}
\usepackage{epsfig}
\usepackage{scalerel,stackengine}
\usepackage{xcolor}
\usepackage{a4wide}
\allowdisplaybreaks
 \stackMath
\newcommand\reallywidehat[1]{%
\savestack{\tmpbox}{\stretchto{%
  \scaleto{%
    \scalerel*[\widthof{\ensuremath{#1}}]{\kern-.6pt\bigwedge\kern-.6pt}%
    {\rule[-\textheight/2]{1ex}{\textheight}}
  }{\textheight}%
}{0.5ex}}%
\stackon[1pt]{#1}{\tmpbox}%
}

\newcommand\reallywidecheck[1]{%
\savestack{\tmpbox}{\stretchto{%
  \scaleto{%
    \scalerel*[\widthof{\ensuremath{#1}}]{\kern-.6pt\bigwedge\kern-.6pt}%
    {\rule[-\textheight/2]{1ex}{\textheight}}
  }{\textheight}%
}{0.5ex}}%
\stackon[1pt]{#1}{\scalebox{-1}{\tmpbox}}%
}

\DeclareFontFamily{U}{mathx}{}
\DeclareFontShape{U}{mathx}{m}{n}{<-> mathx10}{}
\DeclareSymbolFont{mathx}{U}{mathx}{m}{n}
\DeclareMathAccent{\widecheck}{0}{mathx}{"71}

\DeclareMathOperator*{\bigast}{\scalebox{2.5}{\raisebox{-0.33ex}{$\ast$}}}

\newcommand{\myfrac}[2]{\frac{\raisebox{-2pt}{$#1$}}
	{\raisebox{0.5pt}{$#2$}}}
\newcommand{\ts}{\hspace{0.5pt}}
\newcommand{\exend}{\hfill$\Diamond$}

\numberwithin{equation}{section}

\renewcommand{\leq}{\leqslant}
\renewcommand{\geq}{\geqslant}

\newcommand{\supp}{\mbox{\rm supp}}

\newcommand{\vol}{\mathrm{vol}}
\newcommand{\RR}{{\mathbb R}}
\newcommand{\QQ}{{\mathbb Q}}
\newcommand{\ZZ}{{\mathbb Z}}
\newcommand{\CC}{{\mathbb C}}

\newcommand{\NN}{\mathbb N}
\newcommand{\cL}{{\mathcal L}}

\newcommand{\cF}{{\mathcal F}}
\newcommand{\cM}{{\mathcal M}}
\newcommand{\cS}{{\mathcal S}}

\newcommand{\eps}{\varepsilon}
\newcommand{\vL}{\varLambda}
\newcommand{\ii}{\mathrm{i}}
\newcommand{\ee}{\mathrm{e}}

\newcommand{\dd}{\mbox{\rm d}}

\newcommand{\Cu}{C^{}_{\mathsf{u}}}
\newcommand{\Cc}{C^{}_{\mathsf{c}}}

\newcommand{\Ccinf}{C^{\infty}_{\mathsf{c}}}

\newcommand{\SAP}{\mathcal{SAP}}

\newtheorem{theorem}{Theorem}[section]
\newtheorem{lemma}[theorem]{Lemma}
\newtheorem{proposition}[theorem]{Proposition}
\newtheorem{cor}[theorem]{Corollary}

\newtheorem{definition}[theorem]{Definition} 
\newtheorem{example}[theorem]{Example}
\newtheorem{remark}[theorem]{Remark}
\newtheorem*{thm*}{Theorem}
\newtheorem*{conj*}{Conjecture}

\title{On almost periodicity in crystalline measures}

\author{Jan Maz\'{a}\v{c}}
\author{Christoph Richard}
\address{Department f\"{u}r Mathematik, Friedrich-Alexander-Universit\"{a}t Erlangen-N\"{u}rnberg,
Cauerstrasse 11, 91058 Erlangen, Germany}
\email{jan.mazac@fau.de, christoph.richard@fau.de}
\urladdr{https://sites.google.com/view/mazac-math}

\author{Nicolae Strungaru}
\address{Department of Mathematics and Statistics, MacEwan University, \newline
\hspace*{\parindent}  Edmonton, Alberta, Canada, 
and 
\newline \hspace*{\parindent} 
Institute of Mathematics ``Simon Stoilow'', 
Bucharest, Romania}
\email{strungarun@macewan.ca}
\urladdr{https://sites.google.com/macewan.ca/nicolae-strungaru/}

\begin{document}

\begin{abstract}
Meyer defined crystalline measures as tempered distributions $\mu$ such that both $  \mu  $ and its Fourier transform $\widehat\mu$ are pure-point Radon measures of locally finite support. He conjectured that every crystalline measure is almost periodic as a tempered distribution. Favorov constructed a counterexample and asked whether crystalline measures are at least almost periodic as general distributions.
To resolve Favorov's question, we first show that the almost periodicity of a crystalline measure is characterised in terms of its translation boundedness, in any class of Radon measures, tempered distributions, or general distributions.
We then construct a crystalline Fourier eigenmeasure that fails to be translation bounded even as a distribution. 
We finally construct a crystalline measure that fails to be a~Fourier quasicrystal (in particular, it fails to be slowly increasing), but it is an almost periodic tempered distribution whose Fourier transform is even a norm almost periodic measure.
Our examples fully resolve the questions of Meyer and Favorov and sharply delineate the class boundary of translation boundedness. They also demonstrate the unusual behaviour of crystalline measures beyond the class of Fourier quasicrystals.
\end{abstract}

\maketitle

\section{Introduction}

Aperiodic order has attracted substantial attention over the last few decades. In 2000, Lagarias gave an influential overview of the field \cite{L00}, in which he identified important questions and aspects deserving further study.  To describe crystalline measures within that broader context, we give a brief tour from model sets via Fourier quasicrystals to crystalline measures. This also serves to motivate basic notions and to highlight important connections and recent developments in the field. We will restrict our focus to crystalline measures from Section~\ref{sec:cmintro} onwards, and we present the main results of this paper in Section~\ref{sec:results}.

\subsection{Aperiodic order of finite local complexity and almost periodicity}

For a transparent setting, let us start with the class of point sets $\varLambda$ in Euclidean space such that both~$\varLambda$ and its set of point differences $\varLambda-\varLambda$ are uniformly discrete. It contains all lattices and their shifts. Point sets in this class are locally rigid in the sense that they can only have finitely many local configurations up to shifts, like lattices. The above class is stable under shifting and under taking subsets, which might be regarded as a source of disorder. It came as a surprise that there exist ordered examples in that class beyond finite sums of shifted lattices.  They were found by Meyer in 1972 and are certain projections of subsets of a higher-dimensional lattice \cite{Mey72, Mey12, LRS25}. Meyer's \emph{regular model sets} are, if restricted to be inversion symmetric, the Euclidean examples of \textit{approximate lattices} \cite{BH18}. The above class does not contain any other members than shifted subsets of approximate lattices. 

\smallskip

The order might be described via almost periodicity. Initially being studied for functions \cite{Bohr, B55, E49, L53}, the notion of Bohr almost periodicity was transferred in the 1950's to distributions by Schwartz \cite{Schwartz}. 
Let us call a point set $\varLambda$ \emph{almost periodic} if the function $f^{}_\varphi(x)=\sum_{\lambda\in\varLambda} \varphi(x-\lambda)$ is Bohr almost periodic for every continuous compactly supported function $\varphi$. Note that, for uniformly discrete point sets, such $\varLambda$ is \emph{translation bounded}, that is, $f^{}_\varphi$~is bounded for every $\varphi$. Moreover, almost periodicity enforces relative denseness and positive finite Banach density for any non-empty point set. In the above class, almost periodicity characterises point sets $\varLambda=\varGamma+F$, with $\varGamma$ a lattice and $F$ a finite set \cite[Cor.~5.3.8]{Str17}. Such point sets are sometimes called \emph{ideal crystals}, if they are non-empty. Now, any regular model set can be approximated from above and from below by almost periodic weighted model sets, up to deviations of arbitrarily small density. A~corresponding notion of \emph{generalised almost periodicity} was defined in 2012 by Meyer \cite{Mey12}.  In fact, this type of almost periodicity characterises regular model sets in the above class \cite{LRS25}. On the level of functions, it has been called \emph{Riemann almost periodicity} \cite{H64}. (Such functions arise from Riemann integrable functions on the associated group compactification, in contrast to continuous functions in Bohr almost periodicity.)

\subsection{Pure-point diffraction and quasicrystals}

Another notion of order arises from optics. It was formalised in 1930 by Wiener for the signal analysis of functions \cite{W30} and has been adapted in 1995 to the class of translation-bounded measures with positive lower density by Hof \cite{Hof}. For point sets, one associates to $\varLambda$ an \emph{autocorrelation measure}, which captures information about all frequencies of points in $\varLambda-\varLambda$. Its Fourier transform is called the \emph{diffraction measure}. Then, the pure-pointedness of the diffraction measure is the footprint of order. Note that adding points of zero density to a given point set does not change its diffraction spectrum. In this sense, the above notion of order is weaker than almost periodicity. Regular model sets are pure point diffractive. This central result in diffraction theory may be regarded as a~consequence of the Poisson summation formula for the higher-dimensional lattice (whose importance was already stressed by Meyer, see also \cite{RS17, FCSS} for more recent elaborations). 

\smallskip

In fact, Wiener also studied the connection between pure-point diffractivity and various types of almost periodicity \cite{W30, WW41}. In the context of translation-bounded measures with positive lower density, this question has been independently addressed by Gou\'{e}r\'{e} \cite{Gouere}. There is a~decent understanding of that connection by now \cite{LSS-long}, which is sketched in Remark~\ref{rem:DT} below. We note that almost periodicity implies a pure-point diffraction measure~\cite{LenStr16}. We also remark that pure-point diffraction can be characterised dynamically by the pure-point spectrum of an associated dynamical system. This dynamical approach, which also dates back to Wiener's pioneering works, has proved itself very useful in diffraction theory \cite{LMS02, BL04, BLM07}. We refer to the monograph \cite{TAO} for a detailed account on diffraction and aperiodic order. 

\smallskip

The renewed interest in Wiener's generalised harmonic analysis was triggered by the discovery of certain highly ordered but aperiodic metallic alloys \cite{SBGC84} in diffraction experiments in 1982. They were called \emph{quasicrystals}, and a number of mathematical descriptions were proposed, which mimic constructions of the Penrose tiling \cite{LS84}, see Kramer's review article~\cite{K17}. The corresponding point sets were called cut-and-project sets. In 1986, de Bruijn studied Fourier properties of these point sets \cite{dB86}, and their generalised Poisson summation formula led de Bruijn to coin a notion of \emph{Poisson comb} \cite{dB87}. In 1995, these point sets were identified as regular model sets \cite{M95}. 
  
\subsection{Aperiodic order beyond finite local complexity}

The picture broadens drastically if the above finite local complexity constraint is dropped. The class of uniformly discrete point sets has many almost periodic members beyond ideal crystals, and each of them is pure point diffractive. Simple examples arise from \emph{modulated lattices} with Bohr almost periodic modulation functions, such as $\varLambda=\{3n+\sin(\sqrt{3}n): n\in \ZZ\}$ on the line. In fact, \emph{every} such point set on the line is a modulated lattice, as has recently been shown by Favorov \cite[Thm.~1]{Fav24d}. It was observed already in the 1970's that modulated lattices describe the diffraction of certain ordered solids. They typically have a countable dense set of non-zero Fourier--Bohr coefficients, as in aperiodic regular model sets. (On the other hand, the coefficients are locally absolutely summable, in contrast to those from aperiodic regular model sets.) In 1987, de~Bruijn realised that both modulated crystals and weighted model sets can be described within a common framework \cite{dB87}, which was later called \emph{deformed weighted model sets} \cite{BD00}. (A recent review, which exploits their associated group compactification, is given in \cite{LLRSS}.) As both their (weighted) Dirac comb and its Fourier transform are pure point, they give rise to unusual Poisson summation formulae. Thus, they belong to the class of de Bruijn's Poisson combs \cite{dB87}.

\subsection{Fourier quasicrystals}

After the discovery of modulated crystals and quasicrystals, it was natural to ask whether aperiodic Poisson combs can have a locally finite set of non-zero Fourier--Bohr coefficients. An answer to that question came from a different direction. Almost periodicity was studied for analytic functions already by Bohr (compare \cite{B55}) and was discovered in the 1950's in (locally finite) zero sets of certain entire almost periodic functions by Krein and Levin \cite{L80}. In 1998, Favorov, Rashkovskii, and Ronkin showed that every almost periodic multiset on the line is the zero set of some entire almost periodic function~\cite{FRR98}. In 2020, it came as another surprise that the latter class indeed contains aperiodic uniformly discrete point sets with locally finite Fourier--Bohr spectrum. They were found by Kurasov and Sarnak \cite{KS20, K24} and were named \emph{Fourier quasicrystals}\footnote{This name first appears in Lagarias' overview article \cite{L00} as the title of a section about Fourier properties of uniformly discrete and relatively dense point sets, without further specification. Also note that, apparently, physical relevance of Fourier quasicrystals has not been investigated yet.}. Their Fourier transform is a~measure that fails to be translation bounded but is slowly increasing.

\smallskip

Olevskii and Ulanovski \cite{OU20, OU20b} then showed that, on the line, such point sets are in one-to-one correspondence with zero sets of finite trigonometric polynomials having simple real roots. 
The underlying connection with Lee--Yang polynomials has recently been extended to Dirac combs with integer weights \cite{AV24} and to higher dimensions \cite{AMKV24}. Meyer showed that the above examples also arise from a~generalised cut-and-project construction, which led him to introduce a notion of \emph{curved model set} \cite{Mey23b}. A~similar approach, which uses the group compactification of such point sets together with techniques from algebraic geometry, has been taken up by Lawton and Tsikh to yield examples in general Euclidean space \cite{Law22, LT25}, which overlap with higher-dimensional examples due to Meyer \cite{Mey23, Mey23c}.

\subsection{Crystalline measures}\label{sec:cmintro}

Already in 2016, the question raised in the above section was placed into a broader context. At that time, it had been shown by Lev and Olevskii \cite{LO15} that for a tempered measure~$\mu$,  which is either positive or one-dimensional,
uniformly discrete support of both $\mu$ and $\widehat \mu$ results in a~periodic structure. Meyer defined the class of \emph{crystalline measures}, i.e., tempered distributions $\mu$ such that both $\mu$ and its Fourier transform $\widehat \mu$ are pure-point measures of locally finite support \cite{Mey16}. This class contains the Dirac combs of ideal crystals, whereas aperiodic regular model sets are excluded. Aperiodic examples based on a model set construction (which appear under the name enriched or projective model sets) had been constructed by Lev and Olevskii just before~\cite{LO16}. Meyer gave additional aperiodic examples, based on constructions by Guinand \cite{Guinand}, Kolountzakis \cite{K16}, and by himself. All of these fail to be positive or to have uniformly discrete support. The point sets by Kurasov and Sarnak provided the first examples that are positive \emph{and} do have uniformly discrete support.  

\smallskip

It is an open question whether there exists an aperiodic crystalline measure supported within a model set. For substantial subclasses, the existence of such a measure can be ruled out \cite{LO17, BST20}. Another problem concerns crystalline measures that are Fourier eigenmeasures, such as lattices of density one. In the periodic case on the line, these have been classified~\cite{BSS23}. 
Recently, Fourier eigenmeasures that generalise Guinand's measure to higher dimensions have been found \cite{AKM25} via a~surprising connection to modular forms (which also provides a classification tool). 
Fourier quasicrystals are nowadays defined to be crystalline measures $\mu$ such that both $\mu$ and $\widehat \mu$ are slowly increasing measures. The above examples of crystalline measures are all Fourier quasicrystals. 
Properties of general crystalline measures have been studied notably by Favorov, see e.g.~\cite{Fav24, Fav24b,Fav24c, Fav25}.

\subsection{Almost periodicity in crystalline measures}

Major questions concerning the almost periodicity of general crystalline measures have remained unresolved. If pure-point spectrum and almost periodicity are considered to be Fourier dual notions, crystalline measures should be almost periodic, in an appropriate sense. Initially, Meyer argued that \emph{all} crystalline measures should be almost periodic tempered distributions \cite{Mey16}. He then realised a gap in his argument, made his statement a conjecture~\cite[Conj.~2.1]{Mey17}, and proved it for a~particular subclass of crystalline measures. Favorov proved Meyer's conjecture for all Fourier quasicrystals \cite[Cor.~2]{Fav24b}, and he essentially proved it for all positive crystalline measures \cite[Thm.~1]{Fav24}, as we explain in Section~\ref{sec:WMC} below.
Based on Kolountzakis' constructions of crystalline measures \cite{K16}, Favorov recently gave an intriguing example of a crystalline measure that is not a~Fourier quasicrystal. He showed that it is translation unbounded as a tempered distribution \cite{Fav24c}, and thus it cannot be an almost periodic tempered distribution. This led him to interpret Meyer's conjecture as a statement about almost periodicity as a distribution. 
\smallskip

\subsection{Overview of the article}
\label{sec:results}
In this article, we study almost periodicity in general crystalline measures. We adapt methods from almost periodicity in translation-bounded measures \cite{ARMA, ST16} to the setting of tempered distributions, following \cite{Schwartz, Str17}. We speak of \emph{strong almost periodicity} instead of almost periodicity, in order to distinguish this notion from weak almost periodicity based on Eberlein's weakly almost periodic functions, which generalise Bohr almost periodic functions \cite{E49}.
We show that, for crystalline measures, strong almost periodicity is equivalent to translation boundedness, within the classes of measures, tempered distributions, and general distributions (Theorem~\ref{thm:sappois}). This reduces the question of strong almost periodicity to the question of translation boundedness. Moreover, due to the duality between pure-point spectrum and strong almost periodicity, it suggests crystalline measures $\mu$ such that both $\mu$ and $\widehat \mu$ are translation-bounded tempered distributions as a natural subclass of crystalline measures, which contains all Fourier quasicrystals.

\smallskip

We then re-analyse Favorov's construction. Modified versions using periodic Fourier eigenmeasures from \cite{BSS23} yield two new explicit examples of crystalline measures outside the class of Fourier quasicrystals. Our first example is a Fourier eigenmeasure that is translation unbounded as a distribution (Theorem~\ref{tnm:ex1}). It is therefore \emph{not} strongly almost periodic in any of the above classes. Our second example is a translation-bounded measure whose Fourier transform is a translation-bounded tempered distribution (Theorem~\ref{thm3.41}). In particular, it is strongly almost periodic as a measure (actually, even more, it is a norm almost periodic measure), and its Fourier transform is strongly almost periodic as a tempered distribution. Whereas is does not belong to the class of Fourier quasicrystals, it does not differ that much from them.

\smallskip

Our examples may be considered as extreme cases of crystalline measures outside the class of Fourier quasicrystals, in the sense that translation boundedness is maximally resp.~minimally violated. This leaves the intermediate classes open. Is there a Fourier eigenmeasure that is translation unbounded as a tempered distribution but translation bounded as a distribution? To which class does Favorov's measure belong? 

\medskip

Our paper is structured as follows. In Section 2, we review the relevant notions, with focus on Fourier quasicrystals. In Section~\ref{sec:WMC}, we clarify the relation between strong almost periodicity and translation boundedness.  Section~\ref{sec:FCE} summarises the construction of Favorov's measure. This serves as a preparation for Section~\ref{sec:sapdistr}, where we construct crystalline Fourier measures that are not translation bounded as a distribution. In Section~\ref{sec:NFQC2}, we construct a~crystalline measure that fails to be a Fourier quasicrystal. 
It is translation bounded as a~measure (and norm almost periodic), and its Fourier transform is translation bounded as a~tempered distribution, but fails to be strongly tempered.

\section{Preliminaries}

\subsection{Fourier transformable measures}

We are interested in harmonic analysis and, for that purpose, we use the Schwartz space $\cS(\RR^d)$ as the space of test functions. Our objects are thus considered to be tempered distributions $T \in \cS'(\RR^d)$, the topological dual of the Schwartz space. We also impose the constraint that $T$ can be identified with a complex Radon measure. This excludes examples such as derivatives of delta distributions. Moreover, it imposes a severe local integrability condition, which we discuss further below. We thus assume that there exists a~complex Radon measure $\mu$ on $\RR^d$ such that $\mu(\varphi) = T(\varphi)$ holds for all $\varphi\in\Ccinf(\RR^d)$.

\smallskip

On the other hand, any measure $\mu$ for which there exists a tempered distribution $T^{}_{\mu}$ such that $\mu(\varphi) = T^{}_{\mu}(\varphi)$ holds for all $\varphi\in\Ccinf(\RR^d)$ is called a \emph{tempered measure}.  We simply write $T$ instead of $T_\mu$ since it is always clear from the context, and we usually do not distinguish between $\mu$ and $T$. However, in certain situations, we may do so for clarity of exposition.
When we speak of a measure in the following, we always mean a complex Radon measure, and we denote by $\cM(\RR^d)$ the space of measures on $\RR^d$.

\smallskip

Let us discuss important subclasses of tempered measures. We adopt a geometric viewpoint by specifying the growth rate of their mass on closed balls $B^{}_r(x)$ of radius $r$ about~$x$.
Following~\cite{BS23}, we say that a tempered measure $\mu$ is \emph{strongly tempered} if its total variation measure~$|\mu|$ is a tempered measure. This notion can be characterised in various ways, see~\cite{BS23, Fav24b}, which we summarise below.

\begin{lemma}[strongly tempered measure]\label{lem:st}
   Let $\mu$ be a measure on $\RR^d$. Then the following are equivalent. 
   \begin{itemize}
       \item[(i)] $\mu$ is a tempered measure that is strongly tempered. 
       \item[(ii)] The total variation measure $|\mu|$ is a tempered measure. 
       \item[(iii)] There is some finite number $\alpha$ such that
    \[ |\mu|\bigl(B^{}_{r}(0) \bigr) \,=\, \mathcal{O}(r^\alpha) \qquad  \mbox{as } r\to \infty \ . \]
    \item[(iv)] $\mu$ is a slowly increasing measure, i.e., there exists a polynomial $P$ such that 
    \[ \int^{}_{\RR^d} \myfrac{\dd |\mu|(x)}{1 + |P(x)|} \, < \, \infty \, .\]
    \item[(v)]  For every Schwartz function $\varphi \in \cS(\RR^d)$, we have $\varphi \in L^1\bigl( |\mu|\bigr)$, and 
    \[\varphi \, \longmapsto \, \int^{}_{\RR^d} \varphi(x) \,\dd \mu(x) \] defines a tempered distribution.  \qed
     \end{itemize} 
\end{lemma}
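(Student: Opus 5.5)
I would prove the cycle of implications (iii)$\Rightarrow$(iv)$\Rightarrow$(v)$\Rightarrow$(i)$\Rightarrow$(ii)$\Rightarrow$(iii). The implication (i)$\Rightarrow$(ii) is the definition of strongly tempered.

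For (iii)$\Rightarrow$(iv), take $P(x)=1+|x|^{2k}$ with $2k>\alpha$. Writing $m(r)=|\mu|(B_r(0))$ and using a layer-cake (Stieltjes) integration,
\[ \int_{\RR^d} \myfrac{\dd|\mu|(x)}{1+|x|^{2k}} \,\leq\, \sum_{n\geq 0} \myfrac{m(n+1)}{1+n^{2k}} \,<\, \infty . \]

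For (iv)$\Rightarrow$(v), every $\varphi\in\cS(\RR^d)$ satisfies $|\varphi|\leq C_\varphi/(1+|P|)$, where $C_\varphi$ is a finite Schwartz seminorm of $\varphi$ (a combination of $\sup|x^\beta\varphi|$ for $|\beta|\leq \deg P$). Hence $\varphi\in L^1(|\mu|)$, and
\[ \Bigl|\int\varphi\,\dd\mu\Bigr| \,\leq\, C_\varphi \int \myfrac{\dd|\mu|}{1+|P|}, \]
which gives continuity on $\cS$, so the map is a tempered distribution.

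For (v)$\Rightarrow$(i), the distribution from (v) agrees with $\mu$ on $\Ccinf$, so $\mu$ is tempered. To see that $|\mu|$ is tempered, use the Radon--Nikodym polar decomposition $\dd\mu=h\,\dd|\mu|$ with $|h|=1$. For $\varphi\in\cS$ we have $|\varphi|\in L^1(|\mu|)$, so $\psi\mapsto\int\psi\,\dd|\mu|$ is defined on $\cS$. Continuity is the one point where (v) must be used in a nontrivial way. I would argue via the uniform boundedness principle: the set $\{\varphi\,\overline{h}\}$ does not consist of test functions, so instead consider the functionals $\varphi\mapsto\int\varphi g\,\dd\mu$ for bounded Borel $g$ with $|g|\leq1$. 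Each is a linear functional finite on $\cS$. It is continuous by the closed graph theorem on the Fréchet space $\cS$: if $\varphi_n\to\varphi$ in $\cS$, then $\varphi_n\to\varphi$ locally uniformly, so limits agree.

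The cleaner route, and the one I would actually take, is direct. The map $\varphi\mapsto\int|\varphi|\,\dd|\mu|$ is a finite seminorm on $\cS$. It is lower semicontinuous as a supremum of the continuous seminorms $\varphi\mapsto\int_{B_R}|\varphi|\,\dd|\mu|$. Since $\cS$ is Fréchet, hence barrelled, a lower semicontinuous finite seminorm is continuous. This gives (ii), and with temperedness of $\mu$, (i).

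For (ii)$\Rightarrow$(iii), temperedness of $|\mu|$ yields $N,C$ with
\[ \Bigl|\int\varphi\,\dd|\mu|\Bigr| \,\leq\, C\sup_{|\beta|,|\gamma|\leq N}\|x^\beta\partial^\gamma\varphi\|_\infty \]
for $\varphi\in\Ccinf$. Apply this to $\varphi_r(x)=\chi(x/r)$ with $\chi\in\Ccinf$, $0\leq\chi\leq1$, and $\chi=1$ on $B_1(0)$. By positivity, $|\mu|(B_r(0))\leq\int\varphi_r\,\dd|\mu|$, and the seminorms of $\varphi_r$ are $\mathcal O(r^N)$ for $r\geq1$. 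Hence (iii) holds with $\alpha=N$.

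The main obstacle is (v)$\Rightarrow$(ii), namely obtaining continuity of $|\mu|$ on $\cS$ from mere integrability together with temperedness of $\mu$. Here I rely on the barrelled-space argument. If that argument stalls, the fallback is a contradiction argument. Assume (iii) fails, choose radii $r_n$ with $|\mu|(B_{r_n}\setminus B_{r_{n-1}})\geq r_n^{n}$, and build a Schwartz function $\varphi=\sum_n r_n^{-n/2}\,\overline{h}_\varepsilon\chi_n$, using smooth approximations of $\overline h$ on annuli. This $\varphi$ would make $\int|\varphi|\,\dd|\mu|=\infty$, contradicting (v).
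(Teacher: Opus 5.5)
Your argument is correct. It cannot follow the paper's proof, because the paper gives none for this lemma: it collects the equivalences from \cite{BS23, Fav24b} and closes with \qed, so what you have written is a self-contained substitute. The implications (iii)$\Rightarrow$(iv)$\Rightarrow$(v) and (ii)$\Rightarrow$(iii) are routine estimates. The one step with real content, (v)$\Rightarrow$(ii), is handled cleanly by your barrelledness argument. The seminorm $p(\varphi)=\int|\varphi|\,\dd|\mu|$ is finite by (v). It is the monotone limit of the seminorms $p_R$, each bounded by $|\mu|(B_R(0))\,\|\varphi\|_\infty$. Hence it is lower semicontinuous, and therefore continuous on the Fr\'echet space $\cS$. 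Equivalently, you could apply Banach--Steinhaus to the truncations $\varphi\mapsto\int_{B_R}\varphi\,\dd|\mu|$, which converge pointwise on $\cS$ by dominated convergence. A by-product worth recording is that integrability of every Schwartz function against $|\mu|$ already forces polynomial growth, so the tempered-distribution clause in (v) is redundant.

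Three small repairs are needed.
\begin{enumerate}
\item In (iii)$\Rightarrow$(iv), the terms of your unit-shell sum are $\mathcal O(n^{\alpha-2k})$, so you need $2k>\alpha+1$. Alternatively, integrate by parts and get by with $2k>\alpha$.
\item Discard the closed-graph musing. For a scalar functional, a closed graph is equivalent to continuity, so the closed graph theorem gives nothing new. Moreover, local uniform convergence of $\varphi_n$ says nothing about $\int\varphi_n g\,\dd\mu$ over all of $\RR^d$.
\item In the fallback, drop the factor $\overline{h}_\eps$. To contradict $\varphi\in L^1(|\mu|)$, it suffices to use non-negative bumps equal to $1$ on the shells $B_{r_n}\setminus B_{r_{n-1}}$, with uniformly bounded derivatives; with these, $\sum_n r_n^{-n/2}\chi_n$ is indeed Schwartz. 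Smooth approximants of a merely Borel $\overline h$ could have uncontrolled derivatives, destroying the Schwartz property. They could also make $|\varphi|$ small exactly where $|\mu|$ carries its mass.
\end{enumerate}
With that last change, the fallback becomes a more elementary, Baire-free proof of (v)$\Rightarrow$(iii).
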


Measures $\mu$ on $\RR^d$ that satisfy property (iii) with $\alpha=d$ constitute a relevant subclass of strongly tempered measures \cite{BST20}, which we call \emph{finite upper density measures}. This subclass contains the translation-bounded measures, which are central objects in diffraction theory~\cite{MoSt}. A~measure $\mu\in \cM(\RR^d)$ is called \emph{translation bounded} if $\mu*\varphi$ is a bounded function for every $\varphi\in \Cc(\RR^d)$. The following characterisation of the class $\cM^\infty(\RR^d)$ of translation-bounded measures on $\RR^d$ combines \cite[Thm.~1.1]{ARMA1}, \cite[Prop.~4.9.21]{MoSt} and \cite[Lem.~3.8]{PRS22}. 
In particular, condition (iii) shows that any translation-bounded measure is indeed a strongly tempered measure.

\begin{lemma}[translation-bounded measure]
\label{lem:tbchar}
   Let $\mu$ be a measure on $\RR^d$. Then the following are equivalent. 
   \begin{itemize}
       \item[(i)] $\mu$ is a translation-bounded measure. 
       \item[(ii)] $|\mu|$ is a translation-bounded measure.
       \item[(iii)] $|\mu|$ has finite upper Banach density, i.e., 
       \[
       \limsup_{r\to\infty} \sup_{x\in \RR^d} \myfrac{1}{\vol(B^{}_r(x))} \ts |\mu|\bigl(B^{}_r(x)\bigr) \,<\, \infty \ .
       \]
        \item[(iv)] For every bounded set $B\subset \RR^d$ having non-empty interior, we have for the $B$-norm 
        \pushQED{\qed}
       \[
        \|\mu\|^{}_{B} \, := \, \sup_{x\in \RR^d}|\mu|(x+B)<\infty \ . \qedhere
       \]
       \popQED
     \end{itemize} 
\end{lemma}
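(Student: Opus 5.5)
We prove the equivalences by a cycle of implications, with (ii)$\Rightarrow$(iii)$\Rightarrow$(iv)$\Rightarrow$(ii) as the core. The link to (i) is settled at the end.

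\emph{(i)$\Leftrightarrow$(ii).} For $\varphi\in\Cc(\RR^d)$ we have the pointwise estimate $|(\mu*\varphi)(x)|\leq (|\mu|*|\varphi|)(x)$. Hence translation boundedness of $|\mu|$ implies that of $\mu$.

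For the converse we use the polar decomposition $\dd\mu = h\,\dd|\mu|$ with $|h|=1$. Fix a compact $K$ with non-empty interior. The quantity $|\mu|(x+K)$ is the supremum of $|\mu(\psi)|$ over continuous $\psi$ supported in a slightly larger compact neighbourhood $K'$ of $x+K$ with $\|\psi\|_\infty\leq 1$. Bounding this uniformly in $x$ is the main obstacle, because $\psi$ depends on $x$ and the hypothesis only controls convolutions with fixed test functions. To handle it, view $x\mapsto\delta_x*\mu$ as a family of functionals on $C(K')$. Translation boundedness of $\mu$ says that for each fixed $\psi\in C(K')$ the numbers $\mu(\psi(\cdot-x))$ are bounded in $x$. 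The uniform boundedness principle on the Banach space $C(K')$ then gives a uniform bound on the functional norms. These norms are exactly $|\mu|(x+K')$, so $\sup_x|\mu|(x+K')<\infty$. This yields (iv) directly for compact sets, and hence (ii) via (iv)$\Rightarrow$(ii) below.

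\emph{(ii)$\Rightarrow$(iv).} Take a bounded $B$. Choose $\varphi\in\Cc(\RR^d)$ with $\varphi\geq 0$ and $\varphi\geq 1$ on $-\overline{B}$. Then $|\mu|(x+B)\leq (|\mu|*\varphi)(x)$, whose right-hand side is bounded in $x$ by (ii).

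\emph{(iv)$\Rightarrow$(ii).} Cover $\supp\varphi$ by finitely many translates $y_j+B$; this uses that $B$ has non-empty interior. Then
\[
|(|\mu|*\varphi)(x)|\,\leq\,\|\varphi\|_\infty\sum_j \|\mu\|^{}_B ,
\]
which is bounded in $x$.

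\emph{(iv)$\Leftrightarrow$(iii).} Take $B$ to be the unit cube. A ball $B_r(x)$ is covered by at most $C(r+1)^d$ unit cubes, so
\[
|\mu|(B_r(x))\leq C(r+1)^d\|\mu\|^{}_B ,
\]
and dividing by $\vol(B_r(x))\sim r^d$ gives a finite $\limsup$. Conversely, if the $\limsup$ is finite, there is some $r_0$ such that $\sup_x |\mu|(B_{r_0}(x))<\infty$. Then $B_{r_0}(0)$ is a bounded set with non-empty interior whose norm is finite. Any other bounded $B$ is covered by finitely many translates of it, which gives (iv) for all $B$.

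Combining these steps closes the cycle. The only non-elementary ingredient is the Banach--Steinhaus argument in (i)$\Rightarrow$(ii).
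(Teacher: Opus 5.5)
Your proof is correct. The paper does not prove this lemma itself; it assembles it from \cite[Thm.~1.1]{ARMA1}, \cite[Prop.~4.9.21]{MoSt} and \cite[Lem.~3.8]{PRS22}. Your cycle is a self-contained Euclidean version of the standard argument: a uniform boundedness argument for the only non-elementary step (i)$\Rightarrow$(ii), and covering estimates for everything else. The cited sources work in more general groups and with more general averaging sequences. Your version gains self-containment and uses only the volume growth of Euclidean balls for (iii).

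There are two small points of precision.

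In (i)$\Rightarrow$(ii), take the Banach space to be $\{\psi\in C(\RR^d) : \supp\psi\subset K'\}$ with the supremum norm, not $C(K')$. For a general $\psi\in C(K')$, the function $\psi(\cdot-x)$ extended by zero is not in $\Cc(\RR^d)$, so translation boundedness of $\mu$ gives no pointwise bound for it. On the correct space, the functional $\psi\mapsto\mu\bigl(\psi(\cdot-x)\bigr)$ has norm $|\mu|\bigl(x+\mathrm{int}(K')\bigr)$, not $|\mu|(x+K')$. Likewise, $|\mu|(x+K)$ is bounded by, not equal to, the supremum you describe. Since $K\subset\mathrm{int}(K')$, this still bounds $|\mu|(x+K)$ uniformly in $x$, so your conclusion stands.

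In (iv)$\Rightarrow$(ii), note that $(|\mu|*\varphi)(x)=\int\varphi(x-y)\,\dd|\mu|(y)$ only involves $y\in x-\supp\varphi$. So you need to cover $-\supp\varphi$, not $\supp\varphi$, by finitely many translates of $B$. The estimate is then $\|\varphi\|_\infty\,|\mu|(x-\supp\varphi)\leq N\,\|\varphi\|_\infty\,\|\mu\|_B$.
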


Note that strong temperedness does not imply translation boundedness. A simple counterexample is the measure $\mu = \sum_{k\in \ZZ} k\ts \delta^{}_k$. By Lemma~\ref{lem:st} (iii), the measure $\mu$ is strongly tempered with $\alpha=2$. But $\mu$ clearly has infinite upper density. Thus, by Lemma~\ref{lem:tbchar} (iii), it cannot be translation bounded.

\smallskip

For an arbitrary tempered distribution $T$, the convolution $T*\varphi$ with any Schwartz function~$\varphi$ is a smooth function that grows polynomially \cite[Thm.~2.3.20]{G14}. 
A tempered distribution $T$ is called \emph{translation bounded} if $T*\varphi$ is bounded for every Schwartz function $\varphi$. A tempered measure that is translation bounded as a tempered distribution may fail to be translation bounded as a measure. As noted by Meyer \cite{Mey17}, an example is Guinand's measure, which we will review in Section~\ref{sec:examples}. However, the converse statement holds true.

\begin{lemma}\label{lem:TBMimpTBTD}
Assume that $\mu$ is a translation-bounded measure. Then $\mu$ is translation bounded as a tempered distribution.
\end{lemma}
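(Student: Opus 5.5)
We have to show two things: first, that $\mu$ defines a tempered distribution at all; second, that $\mu*\varphi$ is a bounded function for every $\varphi\in\cS(\RR^d)$.

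\textbf{Step 1: $\mu$ is a tempered distribution.} By Lemma~\ref{lem:tbchar}~(iii), $|\mu|$ has finite upper Banach density. Hence $|\mu|(B_r(0))=\mathcal O(r^d)$, and Lemma~\ref{lem:st} shows that $\mu$ is strongly tempered. By Lemma~\ref{lem:st}~(v), every Schwartz function is $|\mu|$-integrable, and $\varphi\mapsto\int\varphi\,\dd\mu$ is a tempered distribution extending $\mu$ on $\Ccinf(\RR^d)$.

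\textbf{Step 2: the convolution is given by an integral.} For $\varphi\in\cS(\RR^d)$, the distributional convolution $T*\varphi$ evaluated at $x$ equals $T(\varphi(x-\cdot))$. Since $T$ is given by integration against $\mu$ on all of $\cS(\RR^d)$, this yields
\[
(\mu*\varphi)(x) \,=\, \int_{\RR^d}\varphi(x-y)\,\dd\mu(y).
\]

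\textbf{Step 3: uniform bound.} Fix the unit cube $C=[0,1)^d$, so that $\RR^d=\dot\bigcup_{k\in\ZZ^d}(k+C)$. By Lemma~\ref{lem:tbchar}~(iv), $\|\mu\|_C=\sup_z|\mu|(z+C)<\infty$. Then
\[
|(\mu*\varphi)(x)| \,\leq\, \sum_{k\in\ZZ^d}\ \sup_{y\in x-k-C}|\varphi(y)|\ |\mu|(x-k-C) \,\leq\, \|\mu\|_C\sum_{k\in\ZZ^d}\sup_{y\in x-k-C}|\varphi(y)| .
\]
Using the Schwartz decay $|\varphi(y)|\leq c\,(1+|y|)^{-d-1}$, the last sum is bounded uniformly in $x$. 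Indeed, the sets $x-k-C$ are again a tiling of $\RR^d$ by unit cubes, and on a unit cube the value of $(1+|y|)^{-d-1}$ varies only by a bounded factor. Hence the sum is dominated by a constant multiple of $\sum_{k}(1+|k|)^{-d-1}$ (up to a shift of the cubes by $x$), which is finite. This gives $\sup_x|(\mu*\varphi)(x)|\leq C_\varphi\|\mu\|_C<\infty$.

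The main point requiring care is Step~2: justifying that the distributional convolution coincides with the pointwise integral. This follows once Step~1 shows that the tempered extension of $\mu$ is given by integration on all of $\cS(\RR^d)$, and not only on $\Ccinf(\RR^d)$.
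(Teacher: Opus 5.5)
Your proof is correct and follows essentially the paper's route: the paper writes $(\mu*\varphi)(x)=(\tau^{}_{-x}\mu)(\varphi^\dagger)$ and invokes the estimate $|\nu(f)|\leq C\,\|Pf\|^{}_\infty\,\|\nu\|^{}_K$ from \cite[Lemma~4.5]{SS21}, together with translation invariance of the $K$-norm, whereas your Step~3 proves such an estimate by hand via a unit-cube tiling and Schwartz decay. One small slip in Step~3: for $y\in x-k-C$ one has $x-y\in k+C$, so the supremum should be $\sup_{z\in k+C}|\varphi(z)|$ (this removes the $x$-dependence outright, so no ``shift'' argument is needed), and $|\mu|(x-k-C)$ is bounded by $\|\mu\|^{}_{-C}$ rather than $\|\mu\|^{}_{C}$, which is also finite by Lemma~\ref{lem:tbchar}~(iv).
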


\begin{proof}
Let $\mu$ be any translation-bounded measure on $\RR^d$. We have already argued that $\mu$ is a~(strongly) tempered distribution. Let $K=[-\frac{1}{2},\frac{1}{2}]^d$ and consider the $K$-norm $\|\mu\|^{}_K <\infty$. Then, by \cite[Lemma~4.5]{SS21}, there exist a finite constant $C$ and a polynomial $P$ of degree $2d$ such that we have for every Schwartz function $\varphi$ the estimate
\[
| (\mu*\varphi) (x) | \,=\, | \mu(\tau^{}_x\varphi^\dagger)| 
= | (\tau^{}_{-x}\mu)(\varphi^\dagger)|
\,\leq C\, \| P \varphi^\dagger \|^{}_\infty \,  \|\tau^{}_{-x} \mu \|^{}_K \,=\, C \ts \| P \varphi^\dagger \|^{}_\infty\,  \| \mu\|^{}_K \,< \, \infty \ .
\]
Here, $\tau_x$ denotes translation by $x$, and $^\dagger$ denotes reflection, that is, $\varphi^\dagger(x) = \varphi(-x)$. 
\end{proof}

We will discuss translation boundedness more systematically in Section~\ref{sec:APimp} below.  In particular, Lemma~\ref{lem:temptb} indicates that the relation between translation boundedness as a~tempered distribution and strong temperedness is subtle. 

\subsection{Fourier transformable point measures}

We now restrict to pure-point measures and choose a setting symmetric under the Fourier transform, which allows us to describe generalised Poisson summation formulae. Such a setting was used in 1987 by de Bruijn \cite{dB87} to describe deformed weighted model sets, with the Gelfand-Shilov space $S^{1/2}_{1/2}$ of distributions~\cite{vE87}. One may instead work in the subspace of tempered distributions, and further restrict to tempered distributions that are also measures. The following notion of Poisson measure is due to Meyer \cite[Def.~5.6]{Mey18}.

\begin{definition}[Poisson measure]
Let $\mu$ be a tempered measure on $\RR^d$ such that both $\mu$ and its distributional Fourier transform $\widehat \mu$ are pure point measures. Then $\mu$ is called a \emph{(tempered) Poisson measure}.
\end{definition}

\begin{remark}[measure assumption]
The measure assumption in the above definition is quite restrictive. As a consequence, any Poisson measure that is translation bounded with respect to the test function space $\Ccinf(\RR^d)$ is strongly almost periodic with respect to this space and its Fourier transform coincides with the Dirac comb weighted by its Fourier--Bohr coefficients, see Lemma~\ref{lem:FBTD} and Theorem~\ref{thm:sappois} below. In this sense, the class of translation-bounded Poisson measures allows for a transparent Fourier theory. 
\end{remark}

Imposing strong temperedness or translation boundedness as a measure on both $\mu$ and $\widehat\mu$ leads to natural subclasses of Poisson measures. The class of Poisson measures that are translation bounded as a measure has been considered in \cite[Def.~1.27]{Mey12}, compare Theorem~\ref{thm:sappois}.

\smallskip

Recall first that a subset of Euclidean space is called \emph{locally finite} if it intersects any compact set in at most finitely many points. If one restricts a Poisson measure to have a locally finite support, all non-trivial accumulation points are excluded. The corresponding notion is due to Meyer \cite[Def.~1]{Mey16}.

\begin{definition}[crystalline measure]
    Assume that both the support $\vL$ of a Poisson measure~$\mu$, and the support $S$ of its Fourier transform $\widehat{\mu}$ are locally finite sets. Then $\mu$ is called a~\emph{(tempered) crystalline measure}. 
\end{definition}

The following lemmas address the structure of the space of crystalline measures and the vague convergence of measures in this setting. The proofs of both statements are obvious, and we omit them.

\begin{lemma}
    The set of all crystalline measures is closed under translations, finite linear combinations, convolution with a measure of finite support, and multiplication by a trigonometric polynomial.  \qed
\end{lemma}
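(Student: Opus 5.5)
The plan is to check each of the four operations directly against the definition, using the basic exchange rules of the distributional Fourier transform. Take $\mu$ crystalline with $\mu=\sum_{\lambda\in\vL} a_\lambda\delta_\lambda$ and $\widehat\mu=\sum_{s\in S} b_s\delta_s$, where $\vL$ and $S$ are locally finite. For each operation, we verify three things: the new object is a tempered measure, it is pure point with locally finite support, and the same holds for its Fourier transform.

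\emph{Translations and modulations.} Translation gives $\tau_x\mu=\sum a_\lambda\delta_{\lambda+x}$, which is again a pure point measure with locally finite support $\vL+x$. Its Fourier transform is $\ee^{-2\pi\ii\langle x,\cdot\rangle}\widehat\mu$, a pure point measure with support contained in $S$; the Fourier transform of a tempered distribution times a character is computed distributionally. Dually, multiplying $\mu$ by a character $\ee^{2\pi\ii\langle y,\cdot\rangle}$ keeps the support inside $\vL$ and shifts $\widehat\mu$ by $y$. A trigonometric polynomial is a finite linear combination of characters, so this case reduces to finite linear combinations.

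\emph{Finite linear combinations.} A finite sum of tempered measures is a tempered measure. Its support lies in a finite union of locally finite sets, which is again locally finite. A finite sum of pure point measures is pure point. The Fourier transform is linear, so the same reasoning applies on the Fourier side.

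\emph{Convolution with a finitely supported measure.} Write $\nu=\sum_{j=1}^n c_j\delta_{x_j}$. Then $\mu*\nu=\sum_j c_j\tau_{x_j}\mu$, which is a finite linear combination of translates, so the previous cases apply. For completeness we note that $\widehat{\mu*\nu}=\widehat\nu\,\widehat\mu$, where $\widehat\nu$ is a trigonometric polynomial; this is consistent with the duality between the convolution and multiplication cases.

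\emph{Expected obstacle.} The only point needing any care is that, on the distributional side, multiplication by a character and translation are exchanged by the Fourier transform for tempered distributions. This is standard, since characters are smooth multipliers of $\cS(\RR^d)$. It must also be consistent with the measure identification on $\Ccinf(\RR^d)$, and this is immediate because both operations preserve $\Ccinf(\RR^d)$. One further subtlety: a linear combination may cancel support points. This only shrinks the supports, so local finiteness is preserved, and a zero measure is harmless.
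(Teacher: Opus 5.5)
Your proposal is correct. It is the routine direct verification the paper has in mind when it omits the proof as obvious. Translations and modulations by characters are interchanged by the distributional Fourier transform, compatibly with the measure identification on $\Ccinf(\RR^d)$. Convolution with a finitely supported measure and multiplication by a trigonometric polynomial then reduce to finite linear combinations of these two operations.
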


\begin{lemma}
    Let $\vL$ be a locally finite set and let $\mu$ and $\mu^{}_{n}$ be measures supported in $\vL$. Then, 
    \[
    \pushQED{\qed}\mu^{}_{n} \, \xrightarrow{\ v\ } \, \mu  \qquad \Longleftrightarrow \qquad \mu^{}_{n}\bigl(\{\lambda\}\bigr) \longrightarrow \mu\bigl(\{\lambda\}\bigr)  \qquad \mbox{for all }\lambda \in \vL\, . \qedhere \]
    \popQED
\end{lemma}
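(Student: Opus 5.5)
Both implications reduce to testing against a compactly supported continuous function whose support meets $\vL$ in a controlled way, so that the pairing isolates individual atoms.

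For the direction ``$\Leftarrow$'', assume $\mu_n(\{\lambda\})\to\mu(\{\lambda\})$ for every $\lambda\in\vL$, and fix $\varphi\in\Cc(\RR^d)$. Since $\vL$ is locally finite and $\supp\varphi$ is compact, the set $F=\vL\cap\supp\varphi$ is finite. Because all $\mu_n$ and $\mu$ are supported in $\vL$, they are pure point measures concentrated on $\vL$. (Each is a Radon measure supported in a closed discrete set, so its restriction to any compact set is a finite sum of Dirac masses.) Hence
\[
\mu_n(\varphi)\,=\,\sum_{\lambda\in F}\mu_n(\{\lambda\})\,\varphi(\lambda)\,\longrightarrow\,\sum_{\lambda\in F}\mu(\{\lambda\})\,\varphi(\lambda)\,=\,\mu(\varphi),
\]
since this is a finite sum of convergent terms. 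Therefore $\mu_n\xrightarrow{\ v\ }\mu$.

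For the direction ``$\Rightarrow$'', fix $\lambda\in\vL$. By local finiteness there is $r>0$ such that $B_r(\lambda)\cap\vL=\{\lambda\}$. Choose $\varphi\in\Cc(\RR^d)$ with $\supp\varphi\subset B_r(\lambda)$ and $\varphi(\lambda)=1$, for instance a tent function. The same support argument as above gives $\mu_n(\varphi)=\mu_n(\{\lambda\})$ and $\mu(\varphi)=\mu(\{\lambda\})$. Vague convergence $\mu_n(\varphi)\to\mu(\varphi)$ then yields the claim.

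The only point needing care is that a Radon measure supported in a locally finite set is determined by its point masses. This holds because $\vL$ is closed and discrete, so $\supp\varphi\cap\vL$ is finite and $|\mu|$ of its complement within $\supp\varphi$ vanishes. There is no genuine obstacle beyond this.
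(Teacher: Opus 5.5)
Your proof is correct, and it is the routine argument the paper has in mind; the paper omits the proof as obvious. For $\Leftarrow$ you use the finiteness of $\vL\cap\supp\varphi$ together with the fact that $|\mu|$ and $|\mu_n|$ vanish off the closed set $\vL$, and for $\Rightarrow$ you test against a bump function isolating a single atom; this is exactly what is needed.
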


\smallskip

From a geometric viewpoint, it is natural to consider crystalline measures that are strongly tempered (and hence slowly increasing). The corresponding notion was introduced by Favorov \cite{Fav19}.

\begin{definition}[Fourier quasicrystal]
    Let $\mu$ be a crystalline measure. If both $\mu$ and $\widehat{\mu}$ are strongly tempered measures, then $\mu$ is called a \emph{Fourier quasicrystal}. 
\end{definition}

\begin{remark}[terminology]
Apparently, the name Fourier quasicrystal had been used for some time without a fixed meaning \cite{L00, LO15, Fav16}. The above denomination appears with reference to \cite{LO16}.
This denomination has now been adopted by many authors, and we will follow that convention. One should, however, keep in mind that regular model sets, the mathematical models of physical quasicrystals, do \emph{not} even belong to the class of Poisson measures, since their Fourier transform fails to be a measure when they are non-periodic. Thus, a quasicrystal is never a Fourier quasicrystal, unless it is an ideal crystal. \exend
\end{remark}

\smallskip

We briefly comment on some subclasses of Fourier quasicrystals. A set $\varLambda$ in Euclidean space is called \emph{uniformly discrete} if there is a ball of radius $r>0$ such that $\varLambda \cap B^{}_r(\lambda)=\{\lambda\}$ for all $\lambda\in\varLambda$.
If, for a crystalline measure $\mu$ on the real line, both $\mu$ and $\widehat \mu$ have uniformly discrete support, then both $\mu$ and $\widehat\mu$ have a periodic structure~\cite{LO15}. 
To describe non-periodic examples, various support conditions beyond uniform discreteness have been proposed.
Pure-point measures of finite upper density are studied in \cite{BST20, BSS23}, where they are called \emph{(doubly) sparse measures}. An important class of locally finite sets beyond uniform discreteness and finite upper density has been considered by Favorov \cite{Fav24b}.

\begin{definition}[$p$-discrete set]
    A discrete set $X$ in $\RR^d$ is called \emph{polynomially discrete} (shortly, $p$-discrete), if there are positive numbers $c,h$ such that, for all $x,y \in X$, $x\neq y$, 
    \[|x-y| \, \geqslant \, c\min\bigl\{ 1, |x|^{-h}, |y|^{-h}\bigr\}\, . \]
\end{definition}
Clearly, any uniformly discrete set is $p$-discrete. 
Any $p$-discrete set has restricted behaviour near plus and minus infinity. The points can get arbitrarily close, but the distance between consecutive points cannot drop faster than polynomially. 

\smallskip

The following result is derived using the explicit form of a tempered distribution with locally finite support.

\begin{lemma}[{\cite[Thm.~1]{Fav24b}}]
\label{lem:Fav_p-discrete}
   Let $\mu$ be a tempered measure, such that $\supp(\mu)$ is $p$-discrete. Then $\mu$ is a strongly tempered measure. \qed
\end{lemma}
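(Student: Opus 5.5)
The plan is to use the structure theorem for tempered distributions together with the locally finite support of $\mu$. Since $\mu$ is tempered, there exist $N\in\NN$ and $C<\infty$ such that
\[
|\mu(\varphi)| \,\leq\, C \sum_{|\beta|\leq N} \sup_{x}\, (1+|x|)^N |\partial^\beta \varphi(x)| \qquad (\varphi\in\cS(\RR^d)).
\]
Because $\mu$ is a measure, its support $X$ is locally finite as soon as it is $p$-discrete. Hence $\mu=\sum_{x\in X} a_x\delta_x$ with $a_x=\mu(\{x\})$, and $|\mu|=\sum_{x\in X}|a_x|\delta_x$.

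By Lemma~\ref{lem:st}~(iii), it suffices to show that $|a_x|\leq C'(1+|x|)^{M}$ for some $M$ and all $x\in X$, and that $\#\bigl(X\cap B_r(0)\bigr)$ grows polynomially in $r$. Together these give $|\mu|(B_r(0))=\mathcal O(r^{\alpha})$.

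For the weights, fix $x\in X$ and set $\rho_x = \tfrac{c}{2}\min\{1,(2|x|+2)^{-h}\}$. The $p$-discreteness gives the following: if $y\in X$ with $|y-x|<\rho_x$ and $y\neq x$, then $|y|\leq 2|x|+1$, so $|x-y|\geq c\min\{1,|x|^{-h},|y|^{-h}\}\geq 2\rho_x$. This is a contradiction, so $B_{\rho_x}(x)\cap X=\{x\}$. Now choose a fixed bump $\psi\in\Ccinf(B_1(0))$ with $\psi(0)=1$, and test against $\varphi_x(y)=\psi\bigl((y-x)/\rho_x\bigr)$. Then
\[
a_x=\mu(\varphi_x), \qquad \|\partial^\beta\varphi_x\|_\infty\lesssim \rho_x^{-|\beta|},
\]
so the seminorm bound yields $|a_x|\lesssim (1+|x|)^{N}\rho_x^{-N}\lesssim (1+|x|)^{N(1+h)}$.

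For the counting, the balls $B_{\rho_x/2}(x)$ with $x\in X\cap B_r(0)$ are pairwise disjoint. Indeed, for two such points the separation is at least $c\min\{1,(r)^{-h}\}$, which dominates the radii up to constants. All these balls lie in $B_{r+1}(0)$, so a volume comparison gives $\#(X\cap B_r(0))\lesssim (1+r)^{d}\,(1+r)^{dh}$.

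Combining the two estimates,
\[
|\mu|(B_r(0))\,\lesssim\, (1+r)^{N(1+h)+d(1+h)},
\]
which is (iii) of Lemma~\ref{lem:st}.

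The main obstacle is getting the local isolation radius uniform in polynomial terms. This is needed both for the scaled bump test, so that only $a_x$ is picked up and the derivative blow-up stays polynomial, and for the disjointness in the counting. I expect this to work by the case distinction $|y|\leq 2|x|+1$ versus $|y|>2|x|+1$ carried out above, since in the latter case $|y-x|\geq |x|+1>\rho_x$ trivially.
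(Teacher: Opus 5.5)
Your proof is correct. It follows essentially the route behind the cited result: the paper obtains the lemma from Favorov's explicit form of a tempered distribution with $p$-discrete support, namely point masses (and derivatives) with polynomially bounded coefficients, together with polynomial point counting. In the measure case this is exactly your scaled-bump weight bound combined with your volume count.

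Two small repairs are needed.
\begin{itemize}
\item First, assume without loss of generality that $c\leq 1$. This is what gives $|y|\leq 2|x|+1$ and $\rho_x\leq 1$.
\item Second, the remark that $c\min\{1,r^{-h}\}$ ``dominates the radii'' is false when $|x|$ is small compared to $r$. Disjointness of the balls $B_{\rho_x/2}(x)$ should instead come from your isolation property: it gives $|x-x'|\geq \max\{\rho_x,\rho_{x'}\}$ for distinct $x,x'\in X$, which is enough.
\end{itemize}
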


\begin{example}
Note that strong temperedness does not imply $p$-discrete support. 
As a counterexample, one can consider 
\[
\mu\,=\, \sum_{n=1}^\infty \myfrac{1}{n!} \ts \sum_{k=1}^{n!} \delta^{}_{n+\frac{k}{n!}} \, ,
\]
which is positive, translation bounded (hence strongly tempered) and supported on a locally finite set that is not $p$-discrete.  \exend
\end{example}

Lemma~\ref{lem:Fav_p-discrete} gives the following criterion for a Poisson measure to be a Fourier quasicrystal.
 
\begin{lemma}\label{lem:FQcrit}
    Let $\mu$ be a Poisson measure such that $\supp(\mu)$ and $\supp(\widehat{\mu})$ are both $p$-discrete. Then $\mu$ is a Fourier quasicrystal. \qed
\end{lemma}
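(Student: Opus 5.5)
The claim follows by checking the definition of a Fourier quasicrystal and applying Lemma~\ref{lem:Fav_p-discrete} twice, once to $\mu$ and once to $\widehat\mu$.

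\emph{Crystalline.} Let $\mu$ be a Poisson measure whose support and whose Fourier support are both $p$-discrete. A $p$-discrete set is discrete by definition, and we take it to be locally finite; this is implicit in the term ``discrete set''. Indeed, on any compact set $K\subset B_R(0)$, distinct points satisfy $|x-y|\geqslant c\min\{1,R^{-h}\}>0$, so only finitely many of them lie in $K$. Hence both $\supp(\mu)$ and $\supp(\widehat\mu)$ are locally finite, and $\mu$ is a crystalline measure.

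\emph{$\mu$ is strongly tempered.} Since $\mu$ is a Poisson measure, it is a tempered measure. Its support is $p$-discrete by hypothesis, so Lemma~\ref{lem:Fav_p-discrete} applies directly and gives that $\mu$ is strongly tempered.

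\emph{$\widehat\mu$ is strongly tempered.} First we check that $\widehat\mu$ is a tempered measure. It is a tempered distribution, being the Fourier transform of one. By the definition of a Poisson measure it is also a pure point measure, and it coincides with this distribution on $\Ccinf(\RR^d)$. So $\widehat\mu$ is a tempered measure in the sense of the paper. Its support is $p$-discrete by hypothesis, and Lemma~\ref{lem:Fav_p-discrete} again gives that it is strongly tempered.

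Thus $\mu$ is crystalline, and both $\mu$ and $\widehat\mu$ are strongly tempered, which is exactly the definition of a Fourier quasicrystal. The only point needing care is that the distribution $\widehat\mu$ may be identified with a tempered measure, and this is built into the definition of a Poisson measure.
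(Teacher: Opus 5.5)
Your proposal is correct and follows the paper's intended argument: the paper states this lemma without proof as a direct consequence of Lemma~\ref{lem:Fav_p-discrete}, applied once to $\mu$ and once to the tempered measure $\widehat\mu$. Your explicit check that $p$-discreteness gives uniform separation on each ball $B_R(0)$, hence local finiteness and thus crystallinity, fills in a step the paper leaves tacit.
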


\subsection{Strong almost periodicity and translation boundedness}\label{sec:APimp}

In this section, we discuss almost periodicity for distributions, tempered distributions and measures. We use a~general framework that encompasses previous approaches, compare \cite{ARMA, LR07,  MoSt, Schwartz, ST16}. We will stick to the terminology by  Gil de Lamadrid and Argabright \cite{ARMA}. Instead of speaking of almost periodicity based on Bohr almost periodic functions, we use the name \emph{strong almost periodicity}, in order to distinguish the above notion from weak almost periodicity, which relies on weakly almost periodic functions that generalise the Bohr almost periodic functions \cite{E49, ARMA}. 

\smallskip

Fix a locally compact abelian group $G$, assumed to be $\sigma$-compact for convenience, and consider the vector space $\Cu(G)$ of uniformly continuous and bounded functions on $G$. Denote by $SAP(G)$ the subspace of Bohr almost periodic functions \cite{Bohr,MoSt}. Recall that any Bohr almost periodic function $f\in SAP(G)$ is amenable, and its mean $M(f)$ may be evaluated on any  F\o lner sequence  $(A^{}_n)^{}_{n\in \NN}$ via  
\[
M(f) \, := \,\lim_{n\to\infty} \myfrac{1}{m^{}_G (A^{}_n)} \int_{A^{}_n}  f(x) \, \dd x \ ,
\]
where $m^{}_G$ denotes a Haar measure on $G$, compare \cite[Sec.~4.5]{MoSt}.
In Euclidean space, one might use cubes $A^{}_n=[-n,n]^d$ or $n$-balls. In fact, for Bohr almost periodic functions, the choice of F\o lner sequence does not play any role, and the mean is independent of it. 

\smallskip

Bohr almost periodic functions are stable under pointwise multiplication, in particular, by a character. Thus, for any $f\in SAP(G)$ and arbitrary character $\chi \in \widehat{G}$, the function $x\mapsto  \overline{\chi(x)} \ts \ts f(x)$ is Bohr almost periodic, and its mean is well defined. The number
\[
 \mathbf{a}_\chi^{}(f)\,:=\,M(\overline{\chi}\ts f) \, = \, \lim_{n\to \infty } \myfrac{1}{m^{}_G (A^{}_n)} \int_{A^{}_n}  \overline{\chi(x)} f(x)\, \dd x
\]
is called the \emph{Fourier--Bohr coefficient} of $f$ at $\chi$. Whereas Fourier analysis of a general Bohr almost periodic function is involved (see e.g.~\cite[Ch.~I.8]{B55}), simplifications arise if its Fourier transform happens to be a measure. The following result has been formulated by Meyer for Bohr almost periodic functions on the line. For later reference, we state it here for Euclidean space and remark that it also holds in the general setting.

\begin{lemma}[Fourier transform of a Bohr almost periodic function {\cite[Thm.~3.8]{Mey18}}]\label{CharSAP}
Let $f\in SAP(\RR^d)$ and consider its Fourier--Bohr spectrum $S=\{k\in \RR^d: \mathbf{a}_{k}^{}(f)\ne 0 \}$, which is at most countable. Then, the following statements are equivalent.
\begin{itemize}
\item[(i)] The distributional Fourier transform $\widehat f\in \cS'(\RR^d)$ of $f$ is a measure.
\item[(ii)] For every compact $K\subset \RR^d$, the sum $\sum_{k\in S\cap K} |\mathbf{a}^{}_{k}(f)|$ is finite.
\end{itemize}
If any of the above statements holds, then the identity $\widehat f=\sum_{k\in S} \mathbf{a}^{}_{k}(f)\delta^{}_{k}$ holds in the measure sense.
\qed
\end{lemma}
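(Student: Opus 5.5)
The two implications are handled separately. Throughout, let $\mathbf{a}_k=\mathbf{a}_k(f)$, and let $\theta_\eps$ be a smooth approximate identity: $\theta\in\Ccinf(\RR^d)$, $\theta\ge 0$, $\int\theta=1$, and $\theta_\eps(x)=\eps^{-d}\theta(x/\eps)$. Then $\widehat{\theta_\eps}(k)=\widehat\theta(\eps k)\to 1$ locally uniformly, and $|\widehat{\theta_\eps}|\le 1$.

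\emph{(ii) $\Rightarrow$ (i).} Assumption (ii) says exactly that $\nu:=\sum_{k\in S}\mathbf{a}_k\delta_k$ is a well-defined Radon measure, since its total variation on every compact set is finite. First check that $\nu$ is tempered. Because $f$ is bounded, Parseval's identity for Bohr almost periodic functions gives $\sum_k|\mathbf{a}_k|^2=M(|f|^2)\le\|f\|_\infty^2$. Combining this with Cauchy--Schwarz does not immediately give polynomial growth of $|\nu|(B_r)$, because the number of points of $S$ in $B_r$ is not controlled. The cleaner route is to define $\nu$ as a distribution from $f$ directly. For $\psi\in\Ccinf(\RR^d)$, take $\varphi=\check\psi\in\cS$ and compute $\widehat f(\psi)=\int f\,\varphi\,\dd x$. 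Approximate $f$ uniformly by Bochner--Fejér trigonometric polynomials $P_n=\sum_{k\in S}c_{n,k}\mathbf{a}_k e^{2\pi\ii kx}$ with multipliers $0\le c_{n,k}\le1$ and $c_{n,k}\to1$. Then $\int P_n\varphi=\sum_k c_{n,k}\mathbf{a}_k\psi(k)$. Since $\psi$ has compact support, dominated convergence applies with dominating sum $\sum_{k\in\supp\psi}|\mathbf{a}_k|\,\|\psi\|_\infty<\infty$, so the right-hand side tends to $\nu(\psi)$. The left-hand side tends to $\int f\varphi$ by uniform convergence, since $\varphi\in L^1$. Hence $\widehat f$ and $\nu$ agree on $\Ccinf$, so $\widehat f$ is a measure and equals $\nu$ in the measure sense.

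\emph{(i) $\Rightarrow$ (ii).} Suppose $\widehat f=\mu$ is a Radon measure, and fix a compact $K$. The plan is to show $\mu=\nu$ on $K$; then (ii) follows, because a pure-point Radon measure has $\sum_{k\in K}|\mu(\{k\})|=|\mu|(K)<\infty$. There are two steps.
\begin{itemize}
\item[(a)] Identify the atoms: $\mu(\{k\})=\mathbf{a}_k$. Pair $\widehat f$ with $\psi_R(\xi)=\widehat{\theta}(R(\xi-k))$ suitably normalised, whose inverse transform is $R^{-d}$ times a modulated dilate of $\theta$. This expresses $\int\psi_R\,\dd\mu$ as a Fejér-type mean of $\overline{\chi_k}f$ over a set of scale $R$, which converges to $\mathbf{a}_k$. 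On the measure side, dominated convergence over a compact neighbourhood gives the limit $\mu(\{k\})$. This needs $\psi_R$ to be compactly supported, so I would choose $\theta$ with compactly supported Fourier transform instead, $\theta\in\cS$, to which $f$ can be paired.
\item[(b)] Show that $\mu$ has no continuous part. I would use the uniform Bochner--Fejér approximation again. The measures $\sum_k c_{n,k}\mathbf{a}_k\delta_k$ converge to $\mu$ in $\cS'$, and their atoms converge pointwise to those of $\mu$. The real issue is excluding a continuous component of $\mu$, which the distributional limit alone does not obviously rule out.
\end{itemize}

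This last point is the main obstacle, and I would attack it with a Wiener-type argument. For $g\in\Ccinf$ supported near $K$, the function $f*\check g$ is Bohr almost periodic and has Fourier transform $g\mu$, a finite measure. A Bohr almost periodic function whose Fourier transform is a finite measure $\rho$ satisfies $M(|f*\check g|^2)=\sum_x|\rho(\{x\})|^2$, while Wiener's lemma gives the same mean as $\lim_{T\to\infty}\frac{1}{T^d}\int_{[-T,T]^d}|\check\rho|^2\,\dd x=c_d\sum_x|\rho(\{x\})|^2$. This equality does not by itself kill the continuous part. Instead I would use that $f*\check g$ is uniformly approximable by trigonometric polynomials. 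These converge to it in the Besicovitch norm, which on Fourier transforms of finite measures controls only the atoms, so this again does not directly exclude a continuous part.

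So I would switch to uniqueness in the almost periodic class. Write $g\mu=\rho_{pp}+\rho_c$. Then $\check\rho_c=f*\check g-\check\rho_{pp}$ is Bohr almost periodic, because $\check\rho_{pp}$ is absolutely convergent and hence almost periodic. On the other hand, by Wiener's lemma $\check\rho_c$ has mean-square zero. A Bohr almost periodic function with $M(|h|^2)=0$ vanishes identically, so $\rho_c=0$. Hence $g\mu$ is pure point with atoms $g(k)\mathbf{a}_k$ by (a), and therefore $\sum_{k\in K}|\mathbf{a}_k|<\infty$.
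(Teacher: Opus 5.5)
Your proof is correct. The paper gives no proof of its own for this lemma. It imports the statement from Meyer's Theorem~3.8, which is formulated on the line, and only asserts that it holds on $\RR^d$ (and more generally). Your argument does what the paper does not: it proves the $\RR^d$ statement directly. It uses only Bochner--Fej\'er approximation and the elementary fact that $R^{-d}\int h(x)\,\theta(\pm x/R)\,\dd x\to M(h)\int\theta$ for $h\in SAP(\RR^d)$ and $\theta\in L^1(\RR^d)$. This fact is immediate for trigonometric polynomials, since $\widehat\theta(R\lambda)=0$ for $\lambda\neq 0$ and $R$ large when $\widehat\theta\in\Ccinf(\RR^d)$, and it then follows in general by uniform approximation.

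The direction (ii)$\Rightarrow$(i) is fine, up to a convention slip. With $\widehat f(\psi)=f(\widehat\psi)$ you should take $\varphi=\widehat\psi$ rather than $\widecheck{\psi}$, since $\int \ee^{2\pi\ii k\cdot x}\widecheck{\psi}(x)\,\dd x=\psi(-k)$. Step (a) is also fine, and your switch to $\theta\in\cS$ with $\widehat\theta\in\Ccinf(\RR^d)$ and $\widehat\theta(0)=1$ is exactly what is needed.

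However, step (b), which you single out as the main obstacle, is superfluous. You assumed that (ii) requires knowing that $\mu$ is pure point, via $\sum_{k\in K}|\mu(\{k\})|=|\mu|(K)$. But the inequality $\sum_{k\in K}|\mu(\{k\})|\leq|\mu|(K)<\infty$ holds for every Radon measure, so (a) alone gives (i)$\Rightarrow$(ii).

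Once (ii) holds, your own (ii)$\Rightarrow$(i) computation shows that $\mu(\psi)=\sum_k\mathbf{a}_k\psi(k)$ for all $\psi\in\Ccinf(\RR^d)$. Since a Radon measure is determined by its values on $\Ccinf(\RR^d)$, this forces $\mu=\sum_k\mathbf{a}_k\delta_k$. So the absence of a continuous part comes for free.

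Your decomposition argument is valid but only re-derives this. There you write $g\mu=\rho_{pp}+\rho_c$, note that $\widecheck{\rho_c}=f*\widecheck{g}-\widecheck{\rho_{pp}}$ is Bohr almost periodic with mean square zero, and conclude that it vanishes.

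The exploratory passages should be removed from a write-up: Parseval with Cauchy--Schwarz for temperedness, the Wiener-lemma identity, and the Besicovitch-norm remark. They are dead ends that you discard yourself, and they obscure a short argument.
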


\smallskip

Now, we transport the notion of almost periodicity to (tempered) distributions and measures via convolution with appropriate test functions, compare \cite[Ch.~VI §9]{Schwartz}, \cite[Cor.~5.5]{ARMA} or \cite[Sec.~4]{LR07}.

\begin{definition}[strong almost periodicity and translation boundedness]
    Fix some locally compact abelian group $G$ (in additive notation), and let $\mathcal{X}$ be a topological vector space of functions on $G$. Denote by $\mathcal{Y}$ the space of all continuous linear functionals on $\mathcal{X}$, the topological dual $\mathcal{X}'$ of~$\mathcal{X}$. Suppose that $G$ acts on $\mathcal{X}$ continuously via translations $\tau^{}_{t}$. For $x\in \mathcal{X}$ and $Y\in \mathcal{Y}$, consider the function  
    $Y\ast x: G\to\CC$ defined by
    \[ 
    Y\ast x \ :\, t \, \longmapsto \, \langle \, Y\, , \, \tau^{}_{t}\ts x^{\dagger}\, \rangle.
    \]
    Here, $^{\dagger}$ denotes the reflection, and we use $\langle \cdot \, , \, \cdot \rangle$ for evaluation of the functional on a function. 
\begin{itemize}
  \item[(i)]  We say that $Y$ is \emph{translation bounded} if $ Y\ast x$ is a bounded function for every $x\in \mathcal{X}$. The subspace of translation-bounded linear functionals will be denoted $\mathcal{Y}^{}_{\infty}$.
  \item[(ii)] We call $Y$ \emph{strongly almost periodic} if the function $ Y\ast x$ is a Bohr almost periodic function for all $x\in \mathcal{X}$. The subspace of strongly almost periodic linear functionals will be denoted $\mathsf{SAP}^{}_{\mathcal{X}}(G) \subset \mathcal{Y}^{}_{\infty}$. 
\end{itemize}
\end{definition}

Let us first note the following consequence for $\mathcal{X}^{}_{1}$ being a dense subspace of $\mathcal{X}^{}_{2}$, such that the identity map $i:\mathcal{X}^{}_{1}\to \mathcal{X}^{}_{2}$ is continuous. In that case, $\mathcal{Y}^{}_2=\mathcal{X}'_2$ is a subspace of $\mathcal{Y}^{}_1=\mathcal{X}'_1$, compare the argument in~\cite[Thm.~4.12]{Rudin}.
We thus have inclusions $\mathcal{Y}^{}_{2,\infty}\subset \mathcal{Y}^{}_{1,\infty}$ and $\mathsf{SAP}^{}_{\mathcal{X}^{}_2}(G)\subset \mathsf{SAP}^{}_{\mathcal{X}^{}_1}(G)$.

\begin{remark}[almost periodicity in diffraction theory \cite{LSS-long}]\label{rem:DT}
From a physical perspective, a~translation-bounded measure $\mu$ is a natural description of an infinite piece of matter. This leads to the setting $\mathcal{X}=\Cc(G)$ and $\mathcal{Y}=\cM(G)$. Any strongly almost periodic measure $\mu$ has a~pure-point diffraction measure \cite{LR07}. In general, pure-point diffractivity of $\mu$ is characterised by strong almost periodicity of its associated autocorrelation measure $\gamma^{}_{\mu}$\/, compare  Proposition~\ref{thm:SAPTBM} below. This, in turn, is equivalent to mean almost periodicity of $\mu$ \cite{LSS-long}. An important subclass comprises Besicovitch almost periodic measures. It is characterised by the property that their diffraction measure is pure point, and it has the squared moduli of their Fourier--Bohr coefficients as weights \cite{Gouere, LSS-long}. 
In diffraction theory, the notation $\cM^\infty(G)$ is used for translation-bounded measures, and the notation $\SAP(G)$ is used for strongly almost periodic measures~\cite{MoSt}. Here, we will write $\cM^\infty(G)$ instead of $\cM_\infty$, and we will write $\mathsf{SAP}^{}_{\Cc}(G)$ instead of $\SAP(G)$.
\exend
\end{remark}

In this article, we will work on $G=\RR^d$, and we will consider the following spaces equipped with their usual topologies: 
\begin{itemize}
\item[(i)] $\mathcal{X}=\Cc(\RR^d)$ and $\mathcal{Y}=\cM(\RR^d)$, the measures on $\RR^d$,
    \item[(ii)] $\mathcal{X}=\Ccinf(\RR^d)$ and $\mathcal{Y}=\mathcal D(\RR^d)$, the distributions on $\RR^d$,
    \item[(iii)] $\mathcal{X}=\cS(\RR^d)$, the Schwartz functions, and $\mathcal{Y}=\cS'(\RR^d)$, the tempered distributions. 
\end{itemize}

Recall that $\Ccinf(\RR^d)$ is a dense subspace of $\cS(\RR^d)$ and that the identity map is continuous, see e.g.~\cite[Thm.~7.10]{Rudin}.
Also observe that, for all of the above spaces, the function $Y\ast x$ is the usual convolution of $Y$ with a test function $x$. 

\begin{remark}[translation-bounded measures and the Wiener algebra]
By the above argument, translation-boundedness as a tempered distribution implies translation-boundedness as a distribution. By Lemma~\ref{lem:TBMimpTBTD}, translation-boundedness as a measure implies translation-boundedness as a tempered distribution. In fact, the latter statement already follows from considering the Wiener algebra $\mathcal X=W(\RR^d)$ and its dual $\mathcal Y=W'(\RR^d)=\cM^\infty(\RR^d)$, the translation-bounded measures \cite{Letal74}, see also \cite{FCSS} for a recent review. We have that $\cS(\RR^d)$ is a~dense subspace of $\mathcal X$, with continuous inclusion.  As a consequence, strong almost periodicity as a translation-bounded measure implies strong almost periodicity as a tempered distribution, which in turn implies strong almost periodicity as a distribution.
\exend
\end{remark}

The following result is taken from \cite{ST16} and clarifies strong almost periodicity for tempered distributions in relation to strong almost periodicity for general distributions. It will later be used for the proof of Theorem~\ref{prop:M-theorem}. We provide a~short argument that will reappear in Proposition~\ref{lem:SAP3}.

\begin{lemma}[{\cite[Lem.~5.2]{ST16}}]\label{lem:SAPchar}
For $T\in \mathcal S'(\RR^d)$, we have $T\in \mathsf{SAP}^{}_{\cS}(\RR^d)$ if and only if \/$T\in \cS'_\infty(\RR^d)$ and for all $\varphi\in \Ccinf(\RR^d)$ we have $T*\varphi\in SAP(\RR^d)$. 
\end{lemma}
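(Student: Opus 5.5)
The forward direction is immediate. If $T\in \mathsf{SAP}^{}_{\cS}(\RR^d)$, then $T*\varphi$ is Bohr almost periodic, hence bounded, for every $\varphi\in\cS(\RR^d)$. So $T\in\cS'_\infty(\RR^d)$. Moreover, $\Ccinf(\RR^d)\subset\cS(\RR^d)$ gives $T*\varphi\in SAP(\RR^d)$ for all $\varphi\in\Ccinf(\RR^d)$.

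For the converse, fix $\varphi\in\cS(\RR^d)$. Choose $\varphi_n\in\Ccinf(\RR^d)$ with $\varphi_n\to\varphi$ in $\cS(\RR^d)$, for instance $\varphi_n=\varphi\cdot\chi(\cdot/n)$ with a smooth cutoff $\chi$ equal to $1$ near the origin. Since $SAP(\RR^d)$ is closed in $(\Cu(\RR^d),\|\cdot\|_\infty)$, it suffices to show that $T*\varphi_n\to T*\varphi$ uniformly on $\RR^d$. Each $T*\varphi_n$ is then Bohr almost periodic by hypothesis, and so is the uniform limit $T*\varphi$.

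The key step, and the only real obstacle, is to upgrade pointwise boundedness to a uniform estimate. Consider the family of tempered distributions $\{\tau_{-x}T : x\in\RR^d\}$, where $(\tau_{-x}T)(\psi)=(T*\psi^\dagger)(x)$. Translation boundedness says that for each $\psi\in\cS(\RR^d)$ we have
\[
\sup_{x}|(\tau_{-x}T)(\psi)|=\|T*\psi^\dagger\|_\infty<\infty .
\]
So this family is pointwise bounded on the Fréchet space $\cS(\RR^d)$. By the uniform boundedness principle (Banach--Steinhaus for Fréchet spaces), it is equicontinuous. Hence there exist $C<\infty$ and $N\in\NN$ such that
\[
\sup_{x\in\RR^d}|(\tau_{-x}T)(\psi)| \,\leq\, C\,\max_{|\alpha|,|\beta|\leq N}\sup_{y}|y^\alpha\partial^\beta\psi(y)| \qquad\text{for all } \psi\in\cS(\RR^d).
\]

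Apply this bound with $\psi=(\varphi_n-\varphi)^\dagger$. It gives $\|T*\varphi_n-T*\varphi\|_\infty\leq C\,p_N(\varphi_n-\varphi)$, where $p_N$ denotes the Schwartz seminorm on the right-hand side above, and $p_N(\varphi_n-\varphi)\to0$. This yields the required uniform convergence and finishes the proof.

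Continuity of $T*\varphi$ is automatic, so the uniform limit lies in $\Cu(\RR^d)$ with no extra work. The only thing to check carefully is that Banach--Steinhaus applies in the Fréchet setting, which is standard (e.g.\ \cite[Thm.~2.6]{Rudin}).
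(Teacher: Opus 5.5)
Your proof is correct, but it takes a different route from the paper's.

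\textbf{The paper's argument.} It takes an approximate identity $(\varphi_n)$ in $\Ccinf(\RR^d)$ and sets $f_n=f*\varphi_n$. Then $T*f_n=(T*\varphi_n)*f\in SAP(\RR^d)$ by the convolution stability of $SAP$ (Remark~\ref{rem:SAPstable}). At the same time, $T*f_n=(T*f)*\varphi_n\to T*f$ uniformly, simply because $T*f\in\Cu(\RR^d)$. So translation boundedness enters only through $T*f\in\Cu(\RR^d)$ for the fixed $f$. No estimate uniform over test functions and no Baire category argument is needed.

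\textbf{Your argument.} You use Banach--Steinhaus on the Fr\'echet space $\cS(\RR^d)$ to obtain $\|T*\psi\|_\infty\le C\,p_N(\psi)$. This means $\psi\mapsto T*\psi$ is continuous from $\cS(\RR^d)$ into the bounded functions with the sup norm. You then approximate $\varphi$ by cut-offs $\varphi\cdot\chi(\cdot/n)$ in the Schwartz topology. All steps check out, including the reflection-invariance of $p_N$ and the closedness of $SAP$ under uniform limits.

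\textbf{What each route buys.} Yours gives a cleaner and slightly stronger structural statement. The set of $\psi\in\cS(\RR^d)$ with $T*\psi\in SAP(\RR^d)$ is a closed subspace, so Bohr almost periodicity on any dense subset of test functions suffices. You also never need associativity of convolution or Remark~\ref{rem:SAPstable}.

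The paper's route is more flexible, because the approximants are never controlled by a uniform bound; they only appear through $(T*f)*\varphi_n$. This is exactly why the same argument can be reused in Lemma~\ref{lem:SAP3}. There the approximants $f*\varphi_n$ have compactly supported Fourier transform, so they lie outside $\Ccinf(\RR^d)$, the only class on which translation boundedness is assumed. An equicontinuity argument on $\Ccinf(\RR^d)$ could not reach them.
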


\noindent For its proof, we recall the notion of approximate identity, see e.g.~\cite[Sec.~1.2.4]{G14}.

\begin{remark}[approximate identity]\label{rem:appid}
 Let $\varphi\in \cS(\RR^d)$ be any non-negative Schwartz function such that $\|\varphi\|^{}_1=1$, and define $\varphi^{}_n(x)=n\varphi(nx)$. Then $(\varphi^{}_n)^{}_{n\in \NN}$ is an approximate identity for Banach algebra $(\Cu(\RR^d), \|\cdot\|^{}_\infty)$, i.e.,  $(f*\varphi^{}_n)^{}_{n\in\NN}$ converges uniformly to $f$, for every $f\in \Cu(\RR^d)$. For example, one may choose $\varphi\in \Ccinf(\RR^d)$ to obtain an approximate identity of compactly supported Schwartz functions. More details about approximate identities in Schwartz spaces can be found, for example, in \cite{AAL21,Gra08}. \exend
\end{remark}

We also recall that Bohr almost periodicity is stable under convolution with integrable functions.

\begin{remark}[$SAP$ is convolution stable]\label{rem:SAPstable}
Note that, for $f\in SAP(\RR^d)$ and $\varphi\in L^1(\RR^d)$, we have $f*\varphi \in SAP(\RR^d)$. 
Indeed, we have the standard estimate
\[
\|f*\varphi\|^{}_\infty \, = \, \sup_{x\in \RR^d} \Big|\int_{\RR^d} f(x-y)\varphi(y) \,\dd y\Big| \, \leq \, \|f\|^{}_\infty\cdot \|\varphi\|^{}_1   \, ,
\]
which shows that the convolution is bounded. From the same estimate, one also deduces the uniform continuity of the convolution. Moreover, it can be used to relate the almost periods of the convolution to those of $f$, thereby showing Bohr almost periodicity. For the mean of $f\ast \varphi$, we have $M(f\ast \varphi)=M(f)\cdot \widehat{\varphi}(0)$, compare \cite[Prop.~4.5.10]{MoSt}. 
\exend
\end{remark}

\begin{proof}[Proof of Lemma~\ref{lem:SAPchar}]
The ``only if"-part is trivial. For the ``if"-part, consider any $f\in \cS(\RR^d)$. Take an approximate identity $(\varphi^{}_n)^{}_{n\in \NN}$ of compactly supported Schwartz functions as in Remark~\ref{rem:appid}. Define $f^{}_n=f* \varphi^{}_n$ and note that $T*\varphi^{}_n\in SAP(\RR^d)$ by assumption. This implies $T*f_n\in SAP(\RR^d)$ by Remark~\ref{rem:SAPstable}.
Moreover, we have $T*f\in \Cu(\RR^d)$ by assumption. As $T*f^{}_n=(T*f)*\varphi^{}_n$, we also have that $(T*f^{}_n)^{}_{n\in\NN}$ converges to $T*f$ uniformly. As $SAP(\RR^d)$ is closed with respect to the supremum norm, we conclude $T*f\in SAP(\RR^d)$.
\end{proof}

\smallskip

For simplicity, we now restrict our discussion to tempered measures that are strongly tempered, an assumption that is often found in the literature. General tempered measures will be discussed in the following section.

\begin{lemma}\label{lem:temptb}
Let $T$ be a strongly tempered measure. Then $\widehat T$ is a translation-bounded tempered distribution.
\end{lemma}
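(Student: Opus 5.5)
The plan is to compute $\widehat T * \varphi$ explicitly for a Schwartz function $\varphi$, and then bound it by the total variation of $T$ against a Schwartz weight. By Lemma~\ref{lem:st}~(v), every Schwartz function is integrable with respect to $|T|$, and $T$ acts on $\cS(\RR^d)$ by integration. For $\varphi\in\cS(\RR^d)$ and $x\in\RR^d$ we have
\[
(\widehat T*\varphi)(x) \,=\, \langle \widehat T, \tau^{}_x\varphi^\dagger\rangle \,=\, \langle T, \widehat{\tau^{}_x\varphi^\dagger}\rangle .
\]
The Fourier transform of a translate is a modulation of the Fourier transform. Hence $\widehat{\tau^{}_x\varphi^\dagger}(y)=\ee^{-2\pi\ii x\cdot y}\,\widehat{\varphi^\dagger}(y)$, up to the sign convention of the Fourier transform, which plays no role. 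Since this function is Schwartz, Lemma~\ref{lem:st}~(v) gives
\[
(\widehat T*\varphi)(x) \,=\, \int_{\RR^d} \ee^{-2\pi\ii x\cdot y}\, \widehat{\varphi^\dagger}(y)\,\dd T(y).
\]

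Next, estimate the modulus of this integral by moving the absolute value inside. This gives
\[
|(\widehat T*\varphi)(x)| \,\leq\, \int_{\RR^d} |\widehat{\varphi^\dagger}(y)|\,\dd|T|(y) \;=:\; C^{}_\varphi ,
\]
and $C^{}_\varphi$ is independent of $x$. To see that $C^{}_\varphi<\infty$, note that $\widehat{\varphi^\dagger}$ is Schwartz, so it is bounded by $c/(1+|P(y)|)$ for the polynomial $P$ from Lemma~\ref{lem:st}~(iv). Finiteness then follows from (iv), or equivalently directly from the integrability statement in (v) applied to the Schwartz function $\widehat{\varphi^\dagger}$. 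Hence $\widehat T*\varphi$ is bounded for every Schwartz $\varphi$, which is exactly translation boundedness of $\widehat T$ as a tempered distribution.

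The only point needing care is the identity $\langle T,\psi\rangle=\int\psi\,\dd T$ for all $\psi\in\cS(\RR^d)$, rather than only for $\psi\in\Ccinf(\RR^d)$. The tempered distribution associated with $T$ is defined through its action on $\Ccinf(\RR^d)$, so its values on general Schwartz functions are a priori determined by density. Lemma~\ref{lem:st}~(v) supplies the needed identification. Alternatively, one can argue directly: approximate $\psi$ by $\psi\chi^{}_n$ with smooth cutoffs $\chi^{}_n$ tending to $1$. Then $\psi\chi^{}_n\to\psi$ in $\cS(\RR^d)$, and dominated convergence with respect to $|T|$ applies with dominating function $|\psi|\in L^1(|T|)$. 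This shows that the two functionals agree on all of $\cS(\RR^d)$. Beyond this point, the argument is a routine estimate.
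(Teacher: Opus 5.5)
Your proposal is correct and is essentially the paper's argument. The paper writes $\widehat T*\psi$ as the Fourier transform of the finite measure $\widecheck{\psi}\cdot T$ and notes that such a transform is bounded; since $\widehat{\varphi^\dagger}=\widecheck{\varphi}$, your estimate $|(\widehat T*\varphi)(x)|\leq\int|\widehat{\varphi^\dagger}|\,\dd|T|$ is exactly this fact written out pointwise, and your check via Lemma~\ref{lem:st}~(v) (or the cutoff argument) that $T$ acts on all of $\cS(\RR^d)$ by integration makes explicit a step the paper leaves implicit.
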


\begin{proof}
Consider any $\psi\in \cS(\RR^d)$ and write $\varphi=\widecheck{\psi}\in \cS(\RR^d)$. As $T$ is a strongly tempered measure and as $\varphi$ is decaying faster than any polynomial, the measure $T\cdot\varphi$ is finite. Thus $\widehat T*\psi=\widehat T*\widehat \varphi=\widehat{T\cdot\varphi}$ is the Fourier transform of a~finite measure and hence bounded.
\end{proof}

The next lemma appears in \cite[Lem.~1]{Fav19} and in \cite[Thm.~2]{Fav24b}. We provide a short proof for a self-contained presentation.

\begin{lemma}
\label{lem:STimpliesTB}
Let $T$ be a strongly tempered measure. If $T$ is pure point, then $\widehat T$ is a strongly almost periodic tempered distribution.
\end{lemma}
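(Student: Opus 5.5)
Write $T=\sum_{\lambda\in\vL} c^{}_\lambda \delta^{}_\lambda$ with $\sum_\lambda |c^{}_\lambda|\,(1+|\lambda|)^{-N}<\infty$ for some $N$; this summability is Lemma~\ref{lem:st}~(iv), since $T$ is strongly tempered. The plan has three steps. First, Lemma~\ref{lem:temptb} gives $\widehat T\in\cS'_\infty(\RR^d)$. Second, by Lemma~\ref{lem:SAPchar} it then suffices to show $\widehat T*\psi\in SAP(\RR^d)$ for every $\psi\in\Ccinf(\RR^d)$; in fact the argument below works for every $\psi\in\cS(\RR^d)$. Third, as in the proof of Lemma~\ref{lem:temptb}, put $\varphi=\widecheck\psi\in\cS(\RR^d)$. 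Then
\[
\widehat T*\psi \,=\, \widehat{T\cdot\varphi} \ , \qquad T\cdot\varphi \,=\, \sum_{\lambda\in\vL} c^{}_\lambda\ts \varphi(\lambda)\, \delta^{}_\lambda \ ,
\]
and $T\cdot\varphi$ is a finite pure-point measure because $\varphi$ decays faster than any polynomial.

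It remains to see that the Fourier transform of a finite pure-point measure is Bohr almost periodic. Concretely,
\[
(\widehat T*\psi)(k) \,=\, \sum_{\lambda} c^{}_\lambda\ts\varphi(\lambda)\, \ee^{-2\pi\ii \lambda\cdot k} \ ,
\]
with the sign convention of the paper's Fourier transform. This series converges absolutely and uniformly in $k$, since $\sum_\lambda |c^{}_\lambda\varphi(\lambda)|=\|T\cdot\varphi\|<\infty$. Its partial sums over finite subsets of $\vL$ are trigonometric polynomials, hence lie in $SAP(\RR^d)$. Since $SAP(\RR^d)$ is closed under uniform limits, the sum lies in $SAP(\RR^d)$. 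Together with translation boundedness, Lemma~\ref{lem:SAPchar} yields $\widehat T\in\mathsf{SAP}^{}_{\cS}(\RR^d)$.

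The main point needing care is the identity $\widehat T*\psi=\widehat{T\cdot\varphi}$ as functions, not merely as tempered distributions. I would verify it pointwise. One has $(\widehat T*\psi)(k)=\langle \widehat T,\tau_k\psi^\dagger\rangle=\langle T,\widehat{\tau_k\psi^\dagger}\rangle$, and $\widehat{\tau_k\psi^\dagger}$ equals $\varphi$ times a character, up to the reflection convention. Strong temperedness, via Lemma~\ref{lem:st}~(v), lets us evaluate this pairing as the absolutely convergent integral $\int \varphi(x)\,\ee^{-2\pi\ii x\cdot k}\,\dd T(x)$, which is exactly the series above.
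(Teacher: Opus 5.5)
Your proof is correct and is essentially the paper's argument. You write $\widehat T*\psi=\widehat{T\cdot\varphi}$ with $\varphi=\widecheck\psi$, where $T\cdot\varphi$ is a finite pure-point measure, so its Fourier transform is Bohr almost periodic. The paper cites a standard lemma for that last step, while you prove it directly via uniform limits of trigonometric polynomials.

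Since your argument already covers every $\psi\in\cS(\RR^d)$, you get $\widehat T\in\mathsf{SAP}^{}_{\cS}(\RR^d)$ directly from the definition. The detour through Lemma~\ref{lem:temptb} and Lemma~\ref{lem:SAPchar} is therefore unnecessary, though harmless.
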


\begin{proof}
We can follow the proof of Lemma~\ref{lem:temptb}. As $T$ is pure point, the finite measure $T\cdot \varphi$ is also pure point, and Bohr almost periodicity of its Fourier transform follows, see e.g.~\cite[Lem.~4.8.10]{MoSt}.
\end{proof}

As an immediate consequence, we recover the following result.
\begin{cor}[{\cite[Cor.~2]{Fav24b}}]\label{theo:Fav24b}
    Let $\mu$ be a Poisson measure and suppose that its distributional Fourier transform $\widehat{\mu}$ is a strongly tempered measure. Then, $\mu$ is a strongly almost periodic tempered distribution. \qed
\end{cor}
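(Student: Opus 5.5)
We apply Lemma~\ref{lem:STimpliesTB} to the measure $T=\widehat\mu$ and then use Fourier inversion; no new estimates are needed.

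\emph{Step 1: check the hypotheses.} Since $\mu$ is a Poisson measure, $\widehat\mu$ is a pure point measure. By assumption it is also strongly tempered. Hence Lemma~\ref{lem:STimpliesTB} applies to $T=\widehat\mu$. It shows that $\widehat{\widehat\mu}$ is a strongly almost periodic tempered distribution.

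\emph{Step 2: identify $\widehat{\widehat\mu}$.} On $\cS'(\RR^d)$ the Fourier transform satisfies $\widehat{\widehat{T}}=T^\dagger$, where $\langle T^\dagger,\varphi\rangle = \langle T,\varphi^\dagger\rangle$. So $\widehat{\widehat\mu}=\mu^\dagger$, the reflection of $\mu$. It remains to see that reflection preserves the class $\mathsf{SAP}^{}_{\cS}(\RR^d)$.

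\emph{Step 3: reflection invariance.} Fix $f\in\cS(\RR^d)$. A direct computation gives
\[
(T^\dagger * f)(t)\,=\,\langle T^\dagger,\tau^{}_t f^\dagger\rangle \,=\, \langle T,\tau^{}_{-t} f\rangle \,=\, (T*f^\dagger)(-t) .
\]
Here $f^\dagger\in\cS(\RR^d)$. If $T=\mu^\dagger$ is strongly almost periodic, then $T*f^\dagger$ is Bohr almost periodic. The reflection of a Bohr almost periodic function is again Bohr almost periodic, since it has the same set of $\eps$-almost periods up to sign. Applying this with $T=\mu^\dagger$ and $T^\dagger=\mu$ shows that $\mu*f\in SAP(\RR^d)$ for every $f\in\cS(\RR^d)$. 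Hence $\mu\in\mathsf{SAP}^{}_{\cS}(\RR^d)$.

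The only points needing care are the Fourier inversion sign convention and this reflection step. Both are routine, so I do not expect a real obstacle. Alternatively, one could use the inverse transform and note that $\widecheck{T}=\widehat{T^\dagger}$ with $(\widehat\mu)^\dagger$ still strongly tempered and pure point, which avoids the reflection argument.
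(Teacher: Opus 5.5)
Your proposal is correct and follows the paper's route: the paper states this corollary as an immediate consequence of Lemma~\ref{lem:STimpliesTB} applied to the pure-point, strongly tempered measure $\widehat\mu$. The paper leaves the identification $\widehat{\widehat\mu}=\mu^\dagger$ and the reflection invariance of strong almost periodicity implicit; your Steps~2 and~3 (or your alternative via the inverse transform) just make that step explicit, and your computation there is right.
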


This implies that all Fourier quasicrystals are strongly almost periodic tempered distributions.
Combining Lemma~\ref{lem:Fav_p-discrete} with Lemma~\ref{lem:STimpliesTB} gives the following result by Favorov for measures with $p$-discrete support. 

\begin{lemma}[{\cite[Thm.~5]{Fav24b}}]\label{thm:Fav-sap}
   Let $\mu$ be a tempered measure, such that $\supp(\mu)$
   is $p$-discrete. Then $\widehat \mu$ is a strongly almost periodic tempered distribution. \qed
\end{lemma}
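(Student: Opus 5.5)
The statement follows by chaining two earlier results. Let $\mu$ be a tempered measure whose support $\supp(\mu)$ is $p$-discrete.

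First, I would apply Lemma~\ref{lem:Fav_p-discrete}. Its hypotheses are exactly ours: $\mu$ is tempered and has $p$-discrete support. The lemma gives that $\mu$ is a strongly tempered measure. Equivalently, by Lemma~\ref{lem:st}, the total variation $|\mu|$ is tempered, and every Schwartz function is $|\mu|$-integrable.

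Second, I would check that $\mu$ is pure point. Since $\supp(\mu)$ is $p$-discrete, it is in particular discrete. The definition of $p$-discreteness forces $|x-y|\geq c\,(1+R)^{-h}$ for distinct points in a ball of radius $R$, so the support is even locally finite. A Radon measure with locally finite support is a locally finite sum of point masses, so $\mu=\sum_{x\in\supp(\mu)}\mu(\{x\})\,\delta_x$ is pure point.

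Third, I would invoke Lemma~\ref{lem:STimpliesTB}. Its hypotheses are that $T=\mu$ is strongly tempered and pure point, both established above. It yields directly that $\widehat\mu$ is a strongly almost periodic tempered distribution.

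No step here is a real obstacle. The only point needing a short argument is that the support is locally finite, so that $\mu$ is pure point. This follows from the lower bound on distances given by $p$-discreteness, and it is also implicit in calling $X$ a discrete set.
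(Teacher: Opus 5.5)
Your proof is correct and follows the paper's route exactly: Lemma~\ref{lem:Fav_p-discrete} gives that $\mu$ is strongly tempered, and Lemma~\ref{lem:STimpliesTB} then gives that $\widehat\mu$ is a strongly almost periodic tempered distribution. You also check explicitly that $p$-discreteness forces locally finite support, so that $\mu$ is pure point; the paper leaves this step implicit, and your argument for it is sound.
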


The converse of Lemma~\ref{thm:Fav-sap} does not hold. One can construct a measure supported on a~set that is not $p$-discrete, whose Fourier transform is even a norm almost periodic measure. We construct such a measure later in Theorem~\ref{thm3.41}.

\smallskip

On the other hand, if \emph{every} tempered distribution supported in a given fixed locally finite set $\vL$ has a Fourier transform that is strongly almost periodic as a tempered distribution, then the set $\vL$ is $p$-discrete, as shown by Favorov \cite[Thm.~5]{Fav24b}.

\subsection{Some pure-point Fourier eigenmeasures}\label{sec:examples}

Here we discuss illustrative examples of pure-point Fourier eigenmeasures on the line. Our first one is a Poisson measure arising from a cut-and-project construction, which appears in \cite[Thm.~5.3]{BSS23}. For the Dirac comb of a~weighted model set, aperiodicity enforces its Fourier transform to have maximal support. One may modify its weight functions to get invariant under the Fourier transform. This leads to a dense Dirac comb \cite{Rich03, LR07}.  
As our example is derived from a cut-and-project construction, it is translation bounded and, as such, also strongly tempered. As it has maximal support (and so does its Fourier transform), it is not a Fourier quasicrystal (and not a crystalline measure either). 

\begin{example}[dense Dirac comb]
Consider $h(x) = \ee^{-\pi x^2}$, an eigenfunction for the Fourier transform. Further, consider a self-dual lattice $\cL\subset \RR^2$ (a rotation of the square lattice) given by
    \[\cL \, = \, \myfrac{1}{\sqrt{\phi+2}} \left\langle \begin{pmatrix}
        \phi \\ -1
    \end{pmatrix}, \begin{pmatrix}
        1 \\ \phi
    \end{pmatrix} \right\rangle^{}_{\ZZ}  \ =\, 
    \cL^{\circ} \, .
    \]
Here, $\phi$ stands for the golden mean.
We define a positive pure-point measure $\omega^{}_{\cL,\ts h}$ on $\RR$ via 
\[
\omega^{}_{\cL,\ts h}(g) \, = \, \delta^{}_{\cL}( g \otimes h) \, = \, \sum_{x\in\ZZ[\phi]} \ee^{-\tfrac{\pi (x^\star)^2}{3-\tau}} \, g \bigl(\tfrac{x}{\sqrt{\phi+2}}\bigr)
\]
for all $g\in\Cc(\RR)$, where $(m+n\phi)^{\star} = m+n(1-\phi)$. 
It has Dirac deltas on the countable dense set $\tfrac{1}{\sqrt{\phi+2}}\ZZ[\phi]$.
By standard arguments, the measure $\omega^{}_{\cL,\ts h}$ is a translation-bounded Poisson measure. In fact, it is a Fourier eigenmeasure, as 
\[
\widehat{\omega^{}_{\cL,\ts h}} \, = \, \omega^{}_{\cL^{\circ},\ts \widecheck{h}}
\, = \, \omega^{}_{\cL^{},\ts h^{}}\ .
\]
For details of the above construction, see~\cite{BSS23}.
\exend
\end{example}

We next discuss  Guinand's measure. This measure on the line was constructed by Meyer~\cite{Mey16} from a similar tempered distribution studied by Guinand \cite{Guinand}. Meyer showed that Guinand's measure is a crystalline measure that is strongly almost periodic as a tempered distribution~\cite{Mey17}. In fact, Meyer's results hold for a class of Guinand-like crystalline measures on the line. 

\begin{example}[Guinand's measure $\tau$]
Consider the measure
\[
\tau \, = \, \sum_{n=1}^{\infty} \chi(n) \ts \myfrac{r^{}_{3}(n)}{\sqrt{n}} \Big( \delta^{}_{\frac{\sqrt{n}}{2}} \ts - \ts  \delta^{}_{-\frac{\sqrt{n}}{2}}\Big)\ ,
\]
where the representation number $r_3(n)$ counts the number of different ways $n$ can be expressed as
a sum of three squares, and where $\chi(n)$ is defined by
\[\chi(n) \, = \, \left\{\begin{array}{rl}
    -\frac{1}{2}, & \mbox{if} \ n\in \NN \setminus 4\NN,\\
    4, & \mbox{if} \ n\in 4\NN \setminus 16\NN,\\
    0,  & \mbox{if} \ n\in 16\NN.\\
\end{array} \right.
\]
It follows from Guinand's work \cite{Guinand} that $\tau$ is a tempered distribution satisfying $\widehat{\tau} = -\ii \tau$.
\exend
\end{example}

\noindent 

The criterion based on $p$-discreteness (Lemma~\ref{lem:FQcrit}) is quite handy in this context.
Since the support $\vL$ of the tempered measure $\tau$ satisfies
\[ 
\vL \, \subset \, \Bigl\{\pm \myfrac{\sqrt{n}}{2} \ : \ n\in \NN \Bigr\}\ ,  
\]
we have for two neighbouring points 
\[ 
\Bigl| \myfrac{\sqrt{n+1}}{2} - \myfrac{\sqrt{n}}{n} \Bigr| \, \approx \, \myfrac{1}{4\sqrt{n}}\ .
\]
This shows that $\vL$ is $p$-discrete. We thus arrive at the following result, compare \cite[Thm.~4.3]{Mey17}.
\begin{proposition}
\label{prop:GMFQC}
    Guinand's measure is a Fourier quasicrystal. In particular, it is a strongly almost periodic tempered distribution.
\end{proposition}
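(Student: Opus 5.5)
The plan is to verify the hypotheses of Lemma~\ref{lem:FQcrit} and then invoke Corollary~\ref{theo:Fav24b}. First, $\tau$ is a Poisson measure: by Guinand's identity $\widehat{\tau} = -\ii \tau$, the Fourier transform is again a pure-point measure. We take from the construction (as stated above, following \cite{Guinand, Mey16}) that $\tau$ is a tempered measure, meaning the distribution defined by Guinand's series agrees with the pure-point measure on $\Ccinf(\RR)$. Moreover, $\supp(\widehat\tau)=\supp(\tau)=\vL$, so a single support condition suffices.

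Second, I will check that $\vL$ is $p$-discrete. Since $\vL\subset\{\pm\sqrt{n}/2 : n\in\NN\}$, it is enough to check this for the larger set $X=\{\pm\sqrt{n}/2\}$, because $p$-discreteness passes to subsets. The points near $0$ form a finite configuration and are handled by choosing $c$ small. Points of opposite sign are separated by at least $1$, apart from finitely many pairs near the origin. For two distinct points of the same sign we have
\[
\bigl|\tfrac{\sqrt m}{2}-\tfrac{\sqrt n}{2}\bigr| \,\geq\, \tfrac{\sqrt{n+1}-\sqrt{n}}{2} \,=\, \myfrac{1}{2(\sqrt{n+1}+\sqrt n)} \,\geq\, \myfrac{1}{4\sqrt{n+1}}
\]
for $m>n$. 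With $|x|=\sqrt n/2$, this is bounded below by $c\,|x|^{-1}$ for a suitable $c$ once $n\geq1$. So $h=1$ works, which is essentially the displayed estimate preceding the proposition.

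Third, Lemma~\ref{lem:FQcrit} shows that $\tau$ is a Fourier quasicrystal; in particular $\vL$ is locally finite, so $\tau$ is crystalline. Strong almost periodicity as a tempered distribution then follows from Corollary~\ref{theo:Fav24b}, because $\widehat\tau=-\ii\tau$ is strongly tempered.

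The only step needing care is the input that $\tau$ is a tempered measure with $\widehat\tau=-\ii\tau$, since everything else is elementary. I will simply cite Guinand's result for this rather than reprove it.
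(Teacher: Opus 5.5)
Your proposal is correct and follows the paper's proof. Both arguments show that $\supp(\tau)=\supp(\widehat\tau)$ is $p$-discrete and deduce that $\tau$ is a Fourier quasicrystal via Lemma~\ref{lem:Fav_p-discrete} (which you use in its packaged form, Lemma~\ref{lem:FQcrit}); strong almost periodicity then follows from Lemma~\ref{lem:STimpliesTB}, of which your Corollary~\ref{theo:Fav24b} is the immediate consequence. Your explicit gap bound $\geq 1/(4\sqrt{n+1})$ for same-sign points is a cleaner version of the paper's displayed approximation, and your remark that $p$-discreteness gives local finiteness, hence crystallinity, makes explicit a step the paper leaves implicit.
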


\begin{proof}
    We have seen above that the support of Guinand's measure $\tau$ is $p$-discrete. Since $\hat{\tau} = -\ii \tau$, we have $\supp(\tau) = \supp(\widehat{\tau})$. Hence $\supp(\widehat{\tau})$ is also $p$-discrete. With Lemma~\ref{lem:Fav_p-discrete}, it follows that $\tau$ is Fourier quasicrystal. Strong almost periodicity follows from Lemma~\ref{lem:STimpliesTB}.
\end{proof}

\smallskip

We now discuss periodic Fourier eigenmeasures, which will be used for constructing crystalline measures outside the class of Fourier quasicrystals.
The idea of lattice-supported measures whose Fourier transform is also lattice-supported (but the relation does \emph{not} come from the Poisson summation formula) appears in \cite{K16}. Lattice-periodic crystalline measures that are Fourier eigenmeasures were studied in~\cite{BSS23}. 
These may have a gap in their support of arbitrary size at the origin, which makes them versatile tools for constructing non-periodic crystalline eigenmeasures. For later use, we include \cite[Cor.~7.4]{BSS23} with $n=m^2$.

\begin{lemma}[periodic eigenmeasures {\cite[Cor.~7.4]{BSS23}}]
\label{lem:eigenmesures}
Fix an eigenvalue $\lambda \in \{ \pm 1, \pm \ii\}$.
For each $m \geq 4$, there exists a non-zero measure $\mu^{}_m$ on the line with the following properties:
\begin{itemize}
    \item[(i)] $\supp (\mu^{}_m) \subset \frac{1}{m} \ZZ \subset \QQ$,
    \item[(ii)] $\mu^{}_m$ is $m \ZZ$-periodic,
    \item[(iii)] $\mu^{}_m$ is a Fourier eigenmeasure for eigenvalue $\lambda$, that is $\widehat{\mu^{}_m}=\lambda \ts \mu^{}_m $,
    \item[(iv)] $\mu^{}_m$ vanishes around the origin. More precisely, $|\mu^{}_{m}|\bigl( (-\frac{m}{4}+1,\frac{m}{4}-1)\bigr) =0$,
    \item[(v)] For all $x\in \supp(\mu^{}_{m})$, $|\mu^{}_{m}|(\{x\}) \leq 1$, and there exists some $x\in \RR$ such that $\mu^{}_{m}(\{x\}) =1$.  \qed
\end{itemize}
\end{lemma}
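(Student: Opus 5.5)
The plan is to reduce everything to linear algebra for the discrete Fourier transform on the cyclic group $\ZZ/N\ZZ$ with $N=m^2$. Condition (v) is then just a normalisation.

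\textbf{Reduction to $\ZZ/N\ZZ$.} A measure satisfying (i) and (ii) has the form
\[
\mu \,=\, \sum_{j=0}^{N-1} c_j \, \bigl(\delta^{}_{j/m} * \delta^{}_{m\ZZ}\bigr),
\]
with $(c_j)$ a vector on $\ZZ/N\ZZ$. Using the Poisson summation formula $\widehat{\delta_{m\ZZ}}=\frac1m\delta_{\frac1m\ZZ}$ and $\widehat{\delta_{j/m}}=\ee^{-2\pi\ii j(\cdot)/m}$, I expect to get
\[
\widehat\mu=\sum_{k\in\ZZ} (\mathcal F c)_k\,\delta^{}_{k/m},\qquad (\mathcal F c)_k=\frac{1}{\sqrt N}\sum_j c_j\,\ee^{-2\pi\ii jk/N}.
\]
So $\widehat\mu$ is again of the same form, with coefficient vector the \emph{unitary} DFT of $c$. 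This uses $\frac1m=\frac1{\sqrt N}$, which is exactly why $N=m^2$ is chosen. Hence (iii) is equivalent to $\mathcal F c=\lambda c$. Condition (iv) is equivalent to $c_j=0$ for all residues $j$ in the window
\[
I=\bigl\{\,j : |j|<m(\tfrac m4-1)\,\bigr\} \pmod N .
\]
Once a non-zero eigenvector vanishing on $I$ is found, (v) follows by rescaling so that the largest $|c_j|$ equals $1$, and multiplying by a unimodular constant so that this coefficient is exactly $1$.

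\textbf{Producing the eigenvector.} Use the spectral projections
\[
P_\lambda=\tfrac14\sum_{k=0}^{3}\lambda^{-k}\mathcal F^k,
\]
together with $\mathcal F^2c=c^\dagger$ (reflection). If $v$ and $\mathcal F v$ both vanish on the symmetric set $I$, then so do $v^\dagger$ and $\mathcal F^3 v=(\mathcal F v)^\dagger$, hence so does $P_\lambda v$. The task therefore splits into two steps.
\begin{itemize}
\item[(a)] Construct a vector $v$ with $v$ and $\mathcal F v$ both supported in the complementary arc $J=(\ZZ/N\ZZ)\setminus I$, which is centred at $N/2$.
\item[(b)] Ensure that $P_\lambda v\neq 0$ for the chosen $\lambda$.
\end{itemize}
For (a), I would exploit that $J$ is large (about $N/2+2m$ residues) and is invariant under $j\mapsto -j$. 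Concretely, I would first try vectors that are modulated and translated Dirac combs on the subgroup $m\ZZ/N\ZZ$, whose DFTs are explicit: the comb on $m\ZZ/N\ZZ$ is self-dual, and translations and modulations are interchanged by $\mathcal F$. I would then take linear combinations of these to cancel the mass on $I$.

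\textbf{Main obstacle.} Step (a) is the hardest part, namely forcing simultaneous vanishing of $v$ and $\widehat v$ on a window of relative size about $1/2$. A naive dimension count is borderline: each eigenspace of $\mathcal F$ has dimension about $N/4$, while $I$ imposes about $N/2-2m$ constraints. So a genuine structural construction is needed, not counting. My fallback is a Gauss-sum/chirp argument. Multiplication by $\ee^{\pi\ii j^2/N}$ (suitably adjusted for parity of $N$) conjugates $\mathcal F$ to a shear. This should allow a vector concentrated near $N/2$ in both $v$ and $\mathcal F v$, up to the slack of $m$ built into the window bound $\frac m4-1$. The point masses, rather than smooth profiles, are what make exact vanishing possible. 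For (b), if $P_\lambda v=0$ I would replace $v$ by a translate or modulate that stays within $J$; the four projections sum to $v\neq0$, so some admissible choice gives a non-zero component for each prescribed $\lambda$.
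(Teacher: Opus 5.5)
Your reduction is sound. With $N=m^2$, measures satisfying (i) and (ii) correspond to vectors $c\in\CC^N$, and $\widehat{\mu}$ corresponds to the unitary DFT $\mathcal F c$. Condition (iv) becomes $c_j=0$ for $|j|<\frac N4-m$, and (v) is a normalisation. The gap is the existence step itself, which you label the ``main obstacle'' and only sketch. Neither sketch works as stated. The translated and modulated combs on $m\ZZ/N\ZZ$ form an orthogonal basis of $\CC^N$, so ``cancelling the mass on $I$ by linear combinations'' merely restates the problem. A chirp has no zeros, so conjugating by it leaves the support problem unchanged.

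Step (b) is also unjustified: $v=\sum_\mu P_\mu v$ only shows that \emph{some} projection is non-zero. Set $W=\{v: v|_I=0=(\mathcal F v)|_I\}$. Since $I=-I$, $W$ is $\mathcal F$-invariant. Hence $W=\bigoplus_\mu (W\cap E_\mu)$ and $P_\lambda W=W\cap E_\lambda=\{c\in E_\lambda: c|_I=0\}$. If this space were trivial, every admissible translate or modulate of $v$, being again in $W$, would still have zero $\lambda$-component. So (b) is the original problem over again. Incidentally, (a) needs no structure at all. Vectors supported in $J$ form a $|J|$-dimensional space, and $(\mathcal F v)|_I=0$ imposes only $|I|<|J|$ conditions.

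The missing idea is what rescues the dimension count you discarded; it is not ``borderline'' once parity is used. You wrote $\mathcal F^2c=c^\dagger$ yourself, so every $c\in E_\lambda$ satisfies $c^\dagger=\lambda^2c$. It is therefore even for $\lambda=\pm1$ and odd for $\lambda=\pm\ii$. Since $I$ is symmetric, the conditions at $j$ and $-j$ coincide on $E_\lambda$.

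Put $k=\lfloor N/4\rfloor$ and $L=\lceil N/4-m\rceil\leq k-m+1$, so that $I=\{|j|\leq L-1\}$. On $E_\lambda$ you only need $c_0=\cdots=c_{L-1}=0$. That is $L$ conditions for $\lambda=\pm1$, and $L-1$ for $\lambda=\pm\ii$, where $c_0=0$ is automatic. Use the classical multiplicities of the DFT eigenvalues $1,-1,-\ii,\ii$ (convention $\ee^{-2\pi\ii jk/N}$): they are $k+1,k,k,k-1$ for $N\equiv0$ and $k+1,k,k,k$ for $N\equiv1 \bmod 4$. These give $\dim E_\lambda\geq k$ for $\lambda=\pm1$ and $\dim E_\lambda\geq k-1$ for $\lambda=\pm\ii$. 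In every case the solution space has dimension at least $m-1\geq 3$. Any non-zero solution, normalised as you describe, yields $\mu_m$. This is plain linear algebra, and the window $\frac m4-1$ in the quoted corollary leaves ample slack for it. The paper itself gives no proof and simply cites BSS23.
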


\begin{remark}[properly nested eigenmeasures]
\label{rem:choice_y} 
We will use the above Fourier eigenmeasures as building blocks for aperiodic crystalline measures.
\begin{itemize}
\item[(i)]  In this paper, we will \emph{always} work with sequences $(\sigma^{}_n)^{}_{n\in \NN}$ of such eigenmeasures with $\sigma^{}_n=\mu^{}_{k^{}_n}$, such that a central fundamental domain of $\sigma^{}_{n}$ is properly contained in the central gap of~$\sigma^{}_{n+1}$. We impose the condition $k^{}_{n+1}/4-1\geq k^{}_n/2+1$ with $k^{}_1\geq 4$ which is, for example, satisfied for the choice $k^{}_n=4^n$.  

\item[(ii)] This setting gives an abundance of points in $\supp(\sigma^{}_n)\setminus \bigcup_{m>n}\supp(\sigma^{}_{m})$, which will be used for further analysis. For example, there exist distinct numbers $x^{}_1,\, x'_1,\, x^{}_2,\, x'_2, \ldots$ such that $x^{}_n,\, x'_n\in \supp(\sigma^{}_n)\setminus \bigcup_{m>n}\supp(\sigma^{}_{m})$ and $|x^{}_n-x'_n|\leq 1/k^{}_n$ for all $n\in \NN$. Also, there exist distinct numbers $y^{}_1,\, y^{}_2, \ldots$ such that for every $n\in \NN$ we have $\sigma^{}_n(\{y^{}_n\})=1$ and $\sigma^{}_m(\{y^{}_n+y\})=0$ for all $m>n$ and all $y\in [-1,1)$. 

\item[(iii)] Given a sequence of such measures, and weights $a^{}_n \in \CC$, then
\[
\sigma\,=\,  \sum_{n\in \NN} a^{}_n \sigma^{}_n
\]
is a measure having locally finite support. If the sequence $(a^{}_n)^{}_{n\in \NN}$ grows at most polynomially in $n$ (in fact, as $\sigma^{}_n=0$ on $(-\frac{k^{}_n}{4}, \frac{k^{}_n}{4})$, we only need $a^{}_n$ to grow polynomially in $k^{}_n$), then $\sigma$ is a~tempered measure, which is a Fourier eigenmeasure to eigenvalue $\lambda$, see \cite[Thm.~7.5]{BSS23}. 

\item[(iv)] More generally, take a sequence $(\varOmega^{}_n)^{}_{n\in \NN}$ of pure-point probability measures with finite support inside a common compact set~$K$ and consider the measure
\[
\sigma\, =\,  \sum_{n\in \NN} a^{}_n (\varOmega^{}_n \ast \sigma^{}_n) \ ,
\]
compare \cite[Sec.~8]{BSS23}.
Then $\sigma$ is a measure having locally finite support. If the sequence $(a^{}_n)^{}_{n\in \NN}$ grows at most polynomially in $n$ (as above, we can, in fact, only require $a^{}_n$ to grow polynomially in $k^{}_n$), then $\sigma$ is a~tempered measure. \exend
\end{itemize}
\end{remark}

The construction in Remark~\ref{rem:choice_y} {\rm (iii)} allows for coefficient sequences that are of polynomial growth. In Section \ref{sec:sapdistr}, we will implement a modification due to Favorov in our setting, which allows us to work with coefficient sequences of superpolynomial growth. 
A variant of the construction in Remark~\ref{rem:choice_y} {\rm (iv)} will be used to obtain our example in Section~\ref{sec:NFQC2}. 

\smallskip

We close this section with a refinement on  Kolountzakis' construction \cite{K16}, which provides examples of aperiodic translation-bounded crystalline measures that are Fourier eigenmeasures. In fact, these measures $\mu$ are \emph{norm-almost periodic}, i.e., for all $\varepsilon>0$, the set
\[P^{B}_{\varepsilon}(\mu) \, := \, \{t\in \RR^d \, : \, \|\mu - \tau^{}_t \mu \|^{}_{B} <\varepsilon \} \]
of its $\varepsilon$-almost periods is relatively dense in $\RR^d$. (Recall that $A\subset \RR^d$ is \emph{relatively dense} if there is a compact set $K\subset \RR^d$ such that $A+K=\RR^d$.) In the above definition,  $\|\cdot\|^{}_B$ is the $B$-norm from Lemma~\ref{lem:tbchar}. Any of the equivalent $B$-norms turns the vector space of translation-bounded measures into a Banach space. Norm almost periodicity does not depend on the choice of $B$, and it implies strong almost periodicity as a measure, see \cite[Sec.~5.3]{MoSt} for details.

\begin{proposition}[an aperiodic translation-bounded crystalline Fourier eigenmeasure]
\label{prop:countreex_simple}
Take $(\sigma^{}_n)^{}_{n\in \NN}$  and $k_n^{}$ as in Remark~\ref{rem:choice_y} and consider the measure
\[
\sigma\,=\, \sum_{n\in \NN} \myfrac{1}{n^2 k^{}_n}\ts \sigma^{}_n \ .
\]
Then $\sigma$ is a crystalline Fourier eigenmeasure that is non-periodic and translation-bounded as a measure. Moreover, $\sigma$ is a norm almost periodic measure. In particular, $\sigma$ is strongly almost periodic as a measure.
\end{proposition}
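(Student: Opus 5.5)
We will work throughout with the sequence $(\sigma_n)_{n\in\NN}$, $\sigma_n=\mu_{k_n}$, of Remark~\ref{rem:choice_y}. The proof has three parts: the crystalline/eigenmeasure property, translation boundedness with a uniform norm bound on each summand, and norm almost periodicity from tail control. The crystalline property will follow from Remark~\ref{rem:choice_y}~(iii) together with a check of local finiteness, and non-periodicity will be read off from the nested gaps.

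\emph{Step 1: crystalline eigenmeasure.} The weights $a_n=1/(n^2k_n)$ are bounded, hence of polynomial growth. By Remark~\ref{rem:choice_y}~(iii), $\sigma$ is a tempered measure with locally finite support and $\widehat\sigma=\lambda\sigma$. In more detail, on any compact set only finitely many $\sigma_n$ are non-zero, since $\sigma_n$ vanishes on $(-\tfrac{k_n}{4}+1,\tfrac{k_n}{4}-1)$. Each $\sigma_n$ has support in the discrete set $\frac1{k_n}\ZZ$. Thus $\supp\sigma$ is locally finite, and so is $\supp\widehat\sigma=\supp\sigma$. Therefore $\sigma$ is crystalline.

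\emph{Step 2: translation boundedness.} We take $B=[0,1)$. Since $\sigma_n$ is $k_n\ZZ$-periodic, has weights of modulus at most $1$, and is supported in $\frac1{k_n}\ZZ$, we get $\|\sigma_n\|_B\le k_n$, since $B$ meets at most $k_n$ points of $\frac1{k_n}\ZZ$. Hence
\[
\sum_n \|a_n\sigma_n\|_B\le \sum_n \tfrac1{n^2}<\infty .
\]
The series therefore converges absolutely in the Banach space $(\cM^\infty(\RR),\|\cdot\|_B)$, and its limit agrees with the vaguely defined $\sigma$. So $\sigma$ is translation bounded.

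\emph{Step 3: norm almost periodicity.} Each $\sigma_n$ is periodic, so the partial sum $S_N=\sum_{n\le N}a_n\sigma_n$ is $k_N\ZZ$-periodic, using $k_n\mid k_N$ as for $k_n=4^n$; for general $k_n$ one uses $\mathrm{lcm}$. Given $\varepsilon>0$, we choose $N$ such that the tail satisfies $\sum_{n>N}n^{-2}<\varepsilon/2$. For $t\in k_N\ZZ$ we then have
\[
\|\sigma-\tau_t\sigma\|_B\le \|S_N-\tau_tS_N\|_B+2\sum_{n>N}\|a_n\sigma_n\|_B<\varepsilon .
\]
This uses $\|\tau_t\nu\|_B=\|\nu\|_B$. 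Hence the set of $\varepsilon$-almost periods contains the relatively dense set $k_N\ZZ$, and $\sigma$ is norm almost periodic. Strong almost periodicity as a measure then follows from \cite[Sec.~5.3]{MoSt}.

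\emph{Non-periodicity.} Suppose $\sigma=\tau_p\sigma$ for some $p\neq0$. We use the points $y_n$ of Remark~\ref{rem:choice_y}~(ii). The point $y_n$ lies in the gaps of all $\sigma_m$ with $m>n$, so $\sigma(\{y_n\})=a_n+\sum_{m<n}a_m\sigma_m(\{y_n\})$. The plan is to fix $n$ with $k_n\gg|p|$ and compare masses at $y_n$ and $y_n+jp$, choosing $j$ so that $y_n+jp$ lies in the central gap of $\sigma_n$; this compares contributions from disjoint layers. A cleaner route is to note that the period $p$ would have to be a period of $\sigma-S_N$ for each large $N$, modulo $k_N\ZZ$. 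Then $\sigma-S_{N}$ would be a non-zero measure that is $p$-periodic modulo $k_N$ yet vanishes on a huge gap, which is impossible once $k_{N+1}/4-1>|p|$. Concretely, periodicity of $\sigma$ together with the $k_N$-periodicity of $S_N$ makes $\sigma-S_N$ invariant under $\tau_{p}$, because $S_N$ is $p$-invariant iff it is. We expect this to be the main obstacle: one must handle whether $S_N$ is itself $p$-periodic. We would argue via $p\in\QQ$, which holds since supports lie in $\QQ$, and take $N$ with $p\in k_N\ZZ$ impossible for large $N$ unless the gap argument applies. The precise bookkeeping of which layer contributes at a given point is what needs care.
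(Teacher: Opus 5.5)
Your Steps 1--3 match the paper's proof. Crystallinity and $\widehat\sigma=\lambda\sigma$ come from Remark~\ref{rem:choice_y}~(iii). The bound $\|\sigma_n\|_B\le k_n$ gives $\|\sigma\|_B\le\sum_n n^{-2}$. Norm almost periodicity holds because the periodic partial sums converge to $\sigma$ in $B$-norm; the paper uses $B=(0,1)$ and the period $\prod_{n\le N}k_n$.

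\textbf{The gap is in non-periodicity.} Everything there rests on the claim that $\tau_p\sigma=\sigma$ forces $\tau_p(\sigma-S_N)=\sigma-S_N$. That is equivalent to $\tau_pS_N=S_N$, and your justification (``$S_N$ is $p$-invariant iff it is'') is circular. Nothing in the proposal shows that $S_N$ is $p$-periodic. The fallback via $p\in\QQ$ does not supply it either. The periods you can guarantee for both $\sigma$ and $S_N$ form $p\ZZ\cap k_N\ZZ$. For, say, $p=3$ and $k_n=4^n$ this is $3k_N\ZZ$, whereas $R_N=\sigma-S_N$ is only known to vanish on the central gap of $\sigma_{N+1}$, of length $2k_N-2<3k_N$, so no contradiction follows. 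Your first sketch has the same hole. Comparing $y_n$ with $y_n+jp$ in the central gap of $\sigma_n$ only yields $a_n=S_{n-1}(\{y_n+jp\})-S_{n-1}(\{y_n\})$. That is no contradiction unless $jp$ is a period of $S_{n-1}$.

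\textbf{What is missing is a propagation step, and with it your route closes.} Take $k_n=4^n$.
\begin{itemize}
\item Every $\sigma_n$ with $n>N$ vanishes on $G_{N+1}=(-k_N+1,k_N-1)$, so $\sigma$ and $S_N$ agree there.
\item Choose $N$ so large that $|p|<k_N-2$. Then $G_{N+1}$ contains a half-open interval $I$ of length $k_N$ together with $I+p$.
\item For arbitrary $x$, pick $x_0\in(x+k_N\ZZ)\cap I$. The $k_N$-periodicity of $S_N$ gives $S_N(\{x+p\})=S_N(\{x_0+p\})=\sigma(\{x_0+p\})=\sigma(\{x_0\})=S_N(\{x_0\})=S_N(\{x\})$.
\item Hence $S_N$, and therefore $R_N$, is $p$-periodic. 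Since $R_N$ vanishes on an interval longer than $|p|$, we get $R_N=0$. This contradicts $R_N(\{y_{N+1}\})=a_{N+1}\neq0$ from Remark~\ref{rem:choice_y}~(ii).
\end{itemize}

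The paper argues differently. A periodic measure with locally finite support has uniformly discrete support, while Remark~\ref{rem:choice_y}~(ii) yields points of $\supp(\sigma)$ arbitrarily close together. That route requires checking that these points are not cancelled by the finitely many lower layers. The completed gap argument avoids that bookkeeping and does not even need $p\in\QQ$.
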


\begin{proof}
 Indeed, crystallinity and the eigenmeasure property follow from Remark~\ref{rem:choice_y} {\rm (iii)}, and non-periodicity follows from Remark~\ref{rem:choice_y} {\rm (ii)}, as points in the support of $\sigma$ get arbitrarily close. In order to show translation boundedness, we use the $(0,1)$-norm and estimate
\[
\|\sigma\|^{}_{(0,1)} \,\leq \,\sum_{n\in \NN} \myfrac{1}{n^2 k^{}_n} \|\sigma^{}_{n}\|^{}_{(0,1)} \, \leq \, \sum_{n\in \NN} \myfrac{1}{n^2 k^{}_n} k^{}_n \, = \, \sum_{n\in\NN} \myfrac{1}{n^2}\, <\, \infty \ .
\]
In order to show that $\sigma$ is norm almost periodic, consider for $N\in \NN$ the approximants
\[
\sigma^{}_N\,=\, \sum_{n=1}^N \myfrac{1}{n^2 k^{}_n}\ts \sigma^{}_n \ .
\]
For each $n\in \NN$, the measure $\sigma^{}_n$ is $k^{}_n\ZZ$ -periodic. It follows that $\sigma^{}_N$ is $c^{}_N\ZZ$ periodic, where $c^{}_N=\prod_{n=1}^N k^{}_n$, and hence norm almost periodic.
As $(\sigma^{}_N)^{}_{N\in \NN}$ converges to~$\sigma$ in the $(0,1)$-norm, also $\sigma$ is norm almost periodic (which follows from a $3\varepsilon$-argument for the $\varepsilon$-almost periods of $\sigma$, as in the case of functions).
In particular, $\sigma$ is a~strongly almost periodic measure \cite[Lem.~7]{BM04}.
\end{proof}

\section{A characterisation of strong almost periodicity}\label{sec:WMC}

\subsection{Strongly almost periodic distributions}

Consider a strongly almost periodic tempered distribution $T\in \mathsf{SAP}^{}_{\cS}(\RR^d)$. It has been noted by Meyer \cite[Thm.~3.8]{Mey18} that its Fourier transform $\widehat T$ coincides with the Dirac comb weighted by the Fourier--Bohr coefficients of $T$, if $\widehat T$ is a measure. Thus, requiring $\widehat T$ to be a measure is a natural assumption that results in a~transparent Fourier theory. This observation holds in more generality, as we now describe.

\begin{definition}[Fourier--Bohr coefficients of a strongly almost periodic distribution]
Let $T\in \mathsf{SAP}^{}_{\Ccinf}(\RR^d)$ be a strongly almost periodic distribution and take $\varphi\in \Ccinf(\RR^d)$ such that $\widehat \varphi(0)=1$. We then call the number
\[
M(T)\, :=\, M(T*\varphi)
\]
the \emph{mean} of $T$. For $k\in \RR^d$ and $\chi^{}_k(x)=e^{2\pi\ii k\cdot x}$, we call $\mathbf{a}^{}_k(T)=M(\overline{\chi_k^{}}\cdot T)$ the \emph{Fourier--Bohr coefficient} of $T$ at $k$. 
\end{definition}

Note that the mean of $T$ is indeed well-defined due to $(T*\varphi)*\psi=(T*\psi)*\varphi\in SAP(\RR^d)$ for $\varphi,\psi\in L^1(\RR^d)$, compare Remark~\ref{rem:SAPstable}. Also, the Fourier-Bohr coefficient is well-defined as $\chi^{}_kT$ is a strongly almost periodic distribution for any $k\in \RR^d$ Indeed, note that for all $\varphi \in \Ccinf(\RR^d)$ we have 
\[
(\chi^{}_kT)*\varphi(t)\, =\, (\chi^{}_kT)(\tau^{}_t \varphi^\dagger) \, = \, T(\chi^{}_k\tau^{}_t \varphi^\dagger) \, = \, \chi^{}_k(t) \ts \bigl(T*(\chi^{}_k \varphi)\bigr)(t) \in SAP(\RR^d) \,.
\]
The following result is now immediate from Lemma~\ref{CharSAP}.

\begin{lemma}[strongly almost periodic tempered distributions] 
\label{lem:FBTD}
Let $T\in  \mathsf{SAP}^{}_{\Ccinf}(\RR^d)$ be a~strongly almost periodic distribution, and consider its at most countable Fourier--Bohr spectrum $S=\{k\in \RR^d: \mathbf{a}_{k}^{}(T)\ne0 \}$.  Assume, in addition, that $T$ is a tempered distribution. Then, the following statements are equivalent.
\begin{itemize}
\item[(i)] The distributional Fourier transform $\widehat T\in \cS'(\RR^d)$ of $T$ is a measure.
\item[(ii)] For every compact $K\subset \RR^d$, the sum $\sum_{k\in S\cap K} |\mathbf{a}^{}_{k}(T)|$ is finite.
\end{itemize}
If any of the above statements holds, then the identity $\widehat T=\sum_{k\in S} \mathbf{a}^{}_{k}(T)\ts \delta^{}_{k}$ holds in the measure sense.
\qed
\end{lemma}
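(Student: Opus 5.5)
The plan is to reduce the statement to Lemma~\ref{CharSAP} by passing to the Bohr almost periodic functions $T*\varphi$ with $\varphi\in\Ccinf(\RR^d)$. The basic identity is $\widehat{T*\varphi}=\widehat\varphi\,\widehat T$, valid for $T\in\cS'(\RR^d)$ and $\varphi\in\Ccinf(\RR^d)\subset\cS(\RR^d)$. The Fourier--Bohr coefficients satisfy $\mathbf{a}_k(T*\varphi)=\widehat\varphi(k)\,\mathbf{a}_k(T)$, where we use the normalisation $\widehat\varphi(k)=\int \ee^{-2\pi\ii k\cdot x}\varphi(x)\,\dd x$. I would check this first, by expanding $\mathbf{a}_k(T)=M\bigl((\overline{\chi_k}T)*\psi\bigr)$ with $\widehat\psi(0)=1$. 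The twisting computation preceding the lemma gives $(\overline{\chi_k}T)*\psi=\overline{\chi_k}\,\bigl(T*(\overline{\chi_k}\psi)\bigr)$, so $\mathbf{a}_k(T)=\mathbf{a}_k\bigl(T*(\overline{\chi_k}\psi)\bigr)$. Combining this with Remark~\ref{rem:SAPstable} (means of convolutions factor through $\widehat{\cdot}(0)$) yields the multiplier formula. In particular, this shows that $\mathbf{a}_k(T)$ does not depend on $\psi$.

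For (i)$\Rightarrow$(ii), assume $\widehat T$ is a measure. Then $\widehat{T*\varphi}=\widehat\varphi\,\widehat T$ is a measure, since $\widehat\varphi$ is smooth. Lemma~\ref{CharSAP} then gives that $\sum_{k\in K}|\widehat\varphi(k)\mathbf{a}_k(T)|$ is finite for every compact $K$. Given a compact $K$, choose $\varphi\in\Ccinf$ with $\widehat\varphi\neq 0$ on $K$. For instance, take $\varphi=\psi*\widetilde{\psi}$ with $\widetilde{\psi}(x)=\overline{\psi(-x)}$ for a suitable small-support $\psi$, so that $\widehat\varphi=|\widehat\psi|^2\ge c>0$ on $K$. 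A finite cover of $K$ by such pieces also works. Dividing by $\inf_K|\widehat\varphi|$ gives (ii).

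For (ii)$\Rightarrow$(i), assume (ii). Then $\nu:=\sum_{k\in S}\mathbf{a}_k(T)\delta_k$ is a Radon measure. For each $\varphi\in\Ccinf$, Lemma~\ref{CharSAP} gives $\widehat\varphi\,\widehat T=\widehat{T*\varphi}=\sum_k\widehat\varphi(k)\mathbf{a}_k(T)\delta_k=\widehat\varphi\,\nu$. This holds as distributions, and that is the main obstacle: $\nu$ is not known to be tempered, so the equality must be read in $\mathcal D'(\RR^d)$, by testing against $\Ccinf$ functions.

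To conclude $\widehat T=\nu$ locally, fix $g\in\Ccinf$ with support in a compact set $K$. Choose $\varphi$ with $\widehat\varphi$ nonvanishing on $K$, as above, and note that $g/\widehat\varphi\in\Ccinf$. Then $\widehat T(g)=(\widehat\varphi\,\widehat T)(g/\widehat\varphi)=\nu(g)$. Hence $\widehat T$ coincides with the measure $\nu$ on $\Ccinf$, which gives (i) and also the asserted identity. The same computation run from (i) gives the identity in that direction too. Aside from interpreting the identity in $\mathcal D'$, the only delicate points are the independence of $\mathbf{a}_k$ from the auxiliary function and the construction of a $\varphi\in\Ccinf$ with $\widehat\varphi\neq0$ on a prescribed compact set.
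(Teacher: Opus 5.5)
Your proposal is correct and follows the paper's approach: the paper calls the result immediate from Lemma~\ref{CharSAP}, and you supply the omitted details, namely the multiplier identity $\mathbf{a}_k(T*\varphi)=\widehat\varphi(k)\,\mathbf{a}_k(T)$, the choice of $\varphi\in\Ccinf(\RR^d)$ with $\widehat\varphi\neq 0$ on a given compact set, and the local division by $\widehat\varphi$. One small fix: the twisting identity should read $(\overline{\chi_k}T)*\psi=\overline{\chi_k}\,\bigl(T*(\chi_k\psi)\bigr)$, so that $\mathbf{a}_k(T)=\mathbf{a}_k\bigl(T*(\chi_k\psi)\bigr)$ (the paper's displayed computation has the same sign slip), since with the sign as you wrote it the factor in your multiplier formula comes out wrong.
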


In the following section, we will characterise strong almost periodicity in Poisson measures in terms of translation boundedness.

\subsection{Strong almost periodicity and translation boundedness}

Recall that translation boundedness is a necessary condition for strong almost periodicity. We will argue that, for Poisson measures and for crystalline measures, the two properties are in fact equivalent.
We first give a corresponding characterisation for distributions. We split the proof into two lemmas, each of which shows one direction. The first lemma is a reformulation of Lemma~\ref{lem:FBTD}.

\begin{lemma}\label{lem:SAP1}
Consider a tempered measure $T$ such that $\widehat T$ is a strongly almost periodic distribution. Then $T$ is a pure-point measure.
\qed 
\end{lemma}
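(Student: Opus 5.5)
We argue directly from Lemma~\ref{lem:FBTD}, applied to the tempered distribution $U:=\widehat T$ in place of $T$. By assumption, $U$ is a strongly almost periodic distribution, and it is tempered because $T$ is. Let $S=\{k: \mathbf{a}^{}_k(U)\neq 0\}$ be its Fourier--Bohr spectrum, which is at most countable. The distributional Fourier transform of $U$ is $\widehat U=\widehat{\widehat T}=T^\dagger$, where $^\dagger$ denotes reflection.

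Since $T$ is a measure, so is its reflection $T^\dagger$. Hence condition~(i) of Lemma~\ref{lem:FBTD} holds for $U$. The lemma then yields the identity
\[
T^\dagger \,=\, \sum_{k\in S} \mathbf{a}^{}_k(U)\, \delta^{}_k
\]
in the measure sense. Moreover, by condition~(ii), the weights are locally absolutely summable, so the right-hand side is indeed a well-defined Radon measure.

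A measure of this form is concentrated on the countable set $S$, so it is pure point. Reflecting back gives
\[
T \,=\, \sum_{k\in S} \mathbf{a}^{}_k(U)\,\delta^{}_{-k},
\]
which is a pure-point measure, as claimed.

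The only step needing care is the identification of $\widehat U$ with $T^\dagger$ as a measure. The distribution $\widehat{\widehat T}$ equals $T^\dagger$ in $\cS'(\RR^d)$, hence in particular on $\Ccinf(\RR^d)$. A Radon measure is determined by its values on $\Ccinf(\RR^d)$, since this space is dense in $\Cc(\RR^d)$ for the inductive limit topology. So the measure produced by Lemma~\ref{lem:FBTD}, which is fixed by the same distribution $\widehat U$, coincides with $T^\dagger$. This is routine, and I expect no real obstacle beyond it.
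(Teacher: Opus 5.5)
Your proposal is correct and matches the paper's route: the paper gives no separate proof and calls Lemma~\ref{lem:SAP1} a reformulation of Lemma~\ref{lem:FBTD}. That is exactly your application of Lemma~\ref{lem:FBTD} to the tempered, strongly almost periodic distribution $\widehat T$, whose Fourier transform $\widehat{\widehat T}=T^\dagger$ is a measure and hence equals the weighted Dirac comb of Fourier--Bohr coefficients.
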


The proof of the other direction is similar to that of Lemma~\ref{lem:SAPchar}. Here, we work with an approximate identity of Schwartz functions with compactly supported Fourier transform. Such an approximate identity is constructed as in Remark~\ref{rem:appid}, now with $\varphi=|\widehat{\psi\ts}|^2=\widehat{\psi *\widetilde{\psi\ts} \ts}$, where $\psi\in \Ccinf(\RR^d)$ is chosen so that $\|\psi\|^{}_2=1$. 

\begin{lemma}\label{lem:SAP3}
Consider a tempered measure $T$ and assume that $\widehat T$ is a translation-bounded distribution. If $T$ is pure point, then $\widehat T$ is a strongly almost periodic distribution.
\end{lemma}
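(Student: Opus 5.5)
We mimic the proof of Lemma~\ref{lem:SAPchar}, now working on the Fourier side. Let $\varphi=|\widehat{\psi\ts}|^2$ with $\psi\in\Ccinf(\RR^d)$ and $\|\psi\|^{}_2=1$, and set $\varphi^{}_n(x)=n^d\varphi(nx)$. Then $(\varphi^{}_n)$ is an approximate identity of non-negative Schwartz functions. Each $\widehat{\varphi^{}_n}$ (equivalently $\widecheck{\varphi^{}_n}$) is a dilate of $\psi*\widetilde{\psi}$, so it lies in $\Ccinf(\RR^d)$. Fix $f\in\Ccinf(\RR^d)$. We must show $\widehat T*f\in SAP(\RR^d)$. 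By hypothesis $\widehat T*f\in\Cu(\RR^d)$: it is bounded and smooth, and its uniform continuity follows from boundedness of $\widehat T*\partial_j f$.

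\textbf{Step 1: the approximants are almost periodic.} Write $f^{}_n=f*\varphi^{}_n\in\cS(\RR^d)$. Then $\widehat T*f^{}_n=\widehat T*\widehat{g^{}_n}$ with $g^{}_n=\widecheck{f}\cdot\widecheck{\varphi^{}_n}$. Since $\widecheck{\varphi^{}_n}$ has compact support, so does $g^{}_n$. As in Lemma~\ref{lem:temptb}, we then have $\widehat T*\widehat{g^{}_n}=\widehat{T\cdot g^{}_n}$, up to the reflection convention. Here $T\cdot g^{}_n$ is the product of a pure-point measure with a smooth compactly supported function, hence a \emph{finite} pure-point measure. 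Its Fourier transform is therefore an absolutely convergent trigonometric series $\sum_\lambda T(\{\lambda\})g^{}_n(\lambda)\ts e^{\mp 2\pi\ii\lambda\cdot x}$, which is Bohr almost periodic, compare \cite[Lem.~4.8.10]{MoSt}. This is exactly the step where we need compactly supported Fourier transforms of the approximate identity rather than compact support of the functions themselves. A general tempered measure $T$ need not be strongly tempered, so $T\cdot g$ is finite only when $g$ has compact support.

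\textbf{Step 2: uniform convergence.} We have $\widehat T*f^{}_n=(\widehat T*f)*\varphi^{}_n$, by associativity of convolution of a distribution with two test functions; this holds since $f\in\Ccinf$ and $\varphi^{}_n\in\cS$ with $\widehat T$ tempered. Because $\widehat T*f\in\Cu(\RR^d)$, Remark~\ref{rem:appid} gives uniform convergence $(\widehat T*f)*\varphi^{}_n\to\widehat T*f$. As $SAP(\RR^d)$ is closed under uniform limits, we get $\widehat T*f\in SAP(\RR^d)$. Hence $\widehat T\in\mathsf{SAP}^{}_{\Ccinf}(\RR^d)$.

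\textbf{Main obstacle.} The delicate point is Step~1: justifying $\widehat T*\widehat g=\widehat{T\cdot g}$ for compactly supported $g$ when $T$ is only a tempered measure. The natural route is to compute the function $x\mapsto\langle\widehat T,\tau^{}_x\widehat g^\dagger\rangle=\langle T,\widehat{\tau^{}_x\widehat g^\dagger}\rangle$, which equals $T$ applied to the compactly supported smooth function $g$ times a character. Because this function is in $\Ccinf$, the distributional pairing coincides with integration against the measure $T$. The sum over the support is finite if the support is locally finite, and absolutely convergent in general, since $|T|$ is finite on compacts. A secondary check is the regularity of $\widehat T*f$. If only boundedness is at hand, I would note that bounded distributions convolved with test functions have bounded derivatives, hence are uniformly continuous.
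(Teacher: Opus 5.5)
Your proposal is correct and follows essentially the same route as the paper: the same approximate identity $\varphi=|\widehat{\psi}|^2$ with compactly supported Fourier transform, the same identification of $\widehat T*f_n$ as the Fourier transform of the finite pure-point measure $T\cdot\widecheck{\varphi_n}\cdot\widecheck{f}$, and the same use of the fact that $SAP(\RR^d)$ is closed under uniform limits. Your extra checks fill in details the paper leaves implicit: you get uniform continuity of $\widehat T*f$ from boundedness of $\widehat T*\partial_j f$, and you verify the pairing that identifies $\widehat T*\widehat{g}$ with integration of $g$ times a character against $T$.
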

\begin{proof}
 Consider any $f\in \Ccinf(\RR^d)$ and note that $\widehat T *f\in \Cu(\RR^d)$ by assumption. Take an approximate identity $(\varphi^{}_n)^{}_{n\in \NN}$ of Schwartz functions in $\RR^d$ with compactly supported Fourier transform and define $f^{}_n=f* \varphi^{}_n$.  As $\widehat T*f^{}_n=(\widehat T*f)*\varphi^{}_n$, the sequence $(\widehat T*f^{}_n)^{}_{n\in\NN}$ converges to $\widehat T*f$ uniformly. Also note that $\widehat T *f^{}_n=\reallywidehat{T\cdot \widecheck{\varphi^{}_n}\cdot \widecheck{f}}\in SAP(\RR^d)$, as $T\cdot \widecheck{\varphi^{}_n}\cdot \widecheck{f}$ is a finite measure, see e.g.~\cite[Lem.~4.8.10]{MoSt}. Since $SAP(\RR^d)$ is closed with respect to the supremum norm, we conclude $\widehat T*f\in SAP(\RR^d)$.
\end{proof}

Combining the previous two results, we get the following characterisation of strong almost periodicity for distributions. 

\begin{proposition}[strong almost periodicity for distributions]
\label{prop:M1-theorem}
Let $T$ be a tempered measure. Then $\widehat T$ is a strongly almost periodic distribution if and only if $\widehat T$ is a~translation-bounded distribution and $T$ is pure point. \qed
\end{proposition}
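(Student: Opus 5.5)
The plan is to combine the two preceding lemmas, one for each direction, and to check that their hypotheses line up with the statement of Proposition~\ref{prop:M1-theorem}.

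For the ``only if'' direction, assume $\widehat T\in \mathsf{SAP}^{}_{\Ccinf}(\RR^d)$. Translation boundedness of $\widehat T$ as a distribution is immediate, since by definition $\mathsf{SAP}^{}_{\mathcal X}(G)\subset \mathcal Y^{}_\infty$: every Bohr almost periodic function is bounded, so $\widehat T*f$ is bounded for each $f\in\Ccinf(\RR^d)$. Pure-pointedness of $T$ is exactly Lemma~\ref{lem:SAP1}.

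To make that step self-contained, I would recall why Lemma~\ref{lem:SAP1} holds. The distribution $\widehat T$ is tempered, because $T$ is a tempered measure, and it is strongly almost periodic as a distribution. Hence Lemma~\ref{lem:FBTD} applies to $\widehat T$, with the Fourier transform of $\widehat T$ being $T^\dagger$, which is a measure. So condition (i) of that lemma holds, and it yields $T^\dagger=\sum_{k\in S}\mathbf a^{}_k(\widehat T)\,\delta^{}_k$ with $S$ at most countable. Therefore $T$ is pure point.

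The one point to check is that the Fourier theory of Lemma~\ref{lem:FBTD} is stated for $\widehat T$ rather than $T$. This only amounts to applying the inverse Fourier transform, or equivalently the reflection $\widehat{\widehat T}=T^\dagger$. Reflection preserves both being a measure and being pure point, so no difficulty arises.

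For the ``if'' direction, assume $\widehat T$ is translation bounded as a distribution and $T$ is pure point. This is precisely the hypothesis of Lemma~\ref{lem:SAP3}, whose conclusion is $\widehat T\in\mathsf{SAP}^{}_{\Ccinf}(\RR^d)$.

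I would briefly recall the key steps of that lemma. We take an approximate identity $(\varphi^{}_n)$ consisting of Schwartz functions with compactly supported Fourier transform. Then $T\cdot\widecheck{\varphi^{}_n}\cdot\widecheck f$ is a finite pure-point measure, since $\widecheck{\varphi^{}_n}$ has compact support and $T$ is a Radon measure. Its Fourier transform $\widehat T*(f*\varphi^{}_n)$ is therefore Bohr almost periodic. Finally, uniform convergence $\widehat T*f*\varphi^{}_n\to\widehat T*f$, which uses $\widehat T*f\in\Cu(\RR^d)$, together with closedness of $SAP(\RR^d)$ in the supremum norm, gives the claim.

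Neither direction presents a genuine obstacle; the proposition is a direct juxtaposition of Lemmas~\ref{lem:SAP1} and~\ref{lem:SAP3}. The only care needed is the minor bookkeeping that $\widehat T$ is tempered, so that Lemma~\ref{lem:FBTD} applies, and that reflection preserves pure-pointedness.
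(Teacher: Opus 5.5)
Your proposal is correct and matches the paper's argument: the proposition is obtained by combining Lemma~\ref{lem:SAP1} (a reformulation of Lemma~\ref{lem:FBTD} applied to $\widehat T$) with the trivial inclusion $\mathsf{SAP}^{}_{\Ccinf}(\RR^d)\subset\mathcal{D}^{}_\infty(\RR^d)$ for the ``only if'' part, and Lemma~\ref{lem:SAP3} for the ``if'' part. Your recaps of both lemmas, including the approximate identity with compactly supported Fourier transform, follow the paper's proofs.
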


We can now use Lemma~\ref{lem:SAPchar} to obtain the analogous characterisation of strong almost periodicity of tempered distributions.
\begin{proposition}[strong almost periodicity for tempered distributions]
\label{prop:M-theorem}
Let $T$ be a tempered measure. Then $\widehat T$ is a strongly almost periodic tempered distribution if and only if $\widehat T$ is a~translation-bounded tempered distribution and $T$ is pure point. \qed
\end{proposition}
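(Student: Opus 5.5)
The plan is to reduce Proposition~\ref{prop:M-theorem} to the distributional version, Proposition~\ref{prop:M1-theorem}, and then upgrade from $\Ccinf$ test functions to Schwartz test functions with Lemma~\ref{lem:SAPchar}.

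\emph{``Only if''.} Suppose $\widehat T\in \mathsf{SAP}^{}_{\cS}(\RR^d)$. Since $\mathsf{SAP}^{}_{\cS}(\RR^d)\subset \cS'_\infty(\RR^d)$ by definition, $\widehat T$ is a translation-bounded tempered distribution. Next, $\Ccinf(\RR^d)$ is dense in $\cS(\RR^d)$ with continuous inclusion. By the inclusion $\mathsf{SAP}^{}_{\mathcal{X}^{}_2}\subset \mathsf{SAP}^{}_{\mathcal{X}^{}_1}$ noted after the definition, $\widehat T$ is therefore also a strongly almost periodic distribution. Lemma~\ref{lem:SAP1}, or equivalently the ``only if'' part of Proposition~\ref{prop:M1-theorem}, then shows that $T$ is pure point.

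\emph{``If''.} Suppose $\widehat T\in \cS'_\infty(\RR^d)$ and $T$ is pure point. The same inclusion argument, now applied to translation boundedness, shows that $\widehat T$ is a translation-bounded distribution. Lemma~\ref{lem:SAP3} then gives $\widehat T\in \mathsf{SAP}^{}_{\Ccinf}(\RR^d)$, that is, $\widehat T*\varphi\in SAP(\RR^d)$ for every $\varphi\in\Ccinf(\RR^d)$. We now have both hypotheses of Lemma~\ref{lem:SAPchar}: $\widehat T\in\cS'_\infty(\RR^d)$, and $\widehat T*\varphi$ is Bohr almost periodic for all compactly supported smooth $\varphi$. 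That lemma yields $\widehat T\in \mathsf{SAP}^{}_{\cS}(\RR^d)$.

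The only step needing care is the passage from $\Ccinf$ to $\cS$ test functions. This is exactly what Lemma~\ref{lem:SAPchar} handles, using an approximate identity together with the uniform continuity of $\widehat T * f$ for $f\in\cS(\RR^d)$, which comes from tempered translation boundedness. The rest is bookkeeping of the inclusions between the classes, so I expect no real obstacle.
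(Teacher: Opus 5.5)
Your proposal is correct and follows the paper's intended route. Like the paper, you combine the distributional characterisation in Proposition~\ref{prop:M1-theorem} (Lemmas~\ref{lem:SAP1} and~\ref{lem:SAP3}) with Lemma~\ref{lem:SAPchar}, which moves between $\Ccinf$ and $\cS$ test functions in both directions.
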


We also have a corresponding characterisation of strong almost periodicity for measures. We argue as above, using the following lemma, whose proof is exactly as in Lemma~\ref{lem:SAPchar}. 

\begin{lemma}
For $\mu\in \cM(\RR^d)$, we have $\mu\in \mathsf{SAP}^{}_{\Cc}(\RR^d)$ if and only if \/$\mu\in \cM^\infty(\RR^d)$ and, for all $\varphi\in \Ccinf(\RR^d)$, we have $\mu*\varphi\in SAP(\RR^d)$. \qed
\end{lemma}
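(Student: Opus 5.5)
The argument follows the proof of Lemma~\ref{lem:SAPchar} almost verbatim, with $\cS'$ replaced by $\cM$ and Schwartz test functions replaced by $\Cc(\RR^d)$.

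For the ``only if'' part, suppose $\mu\in\mathsf{SAP}^{}_{\Cc}(\RR^d)$. Every Bohr almost periodic function is bounded, so $\mu*\varphi$ is bounded for every $\varphi\in\Cc(\RR^d)$, and hence $\mu\in\cM^\infty(\RR^d)$. Since $\Ccinf(\RR^d)\subset\Cc(\RR^d)$, we also get $\mu*\varphi\in SAP(\RR^d)$ for all $\varphi\in\Ccinf(\RR^d)$.

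For the ``if'' part, fix $f\in\Cc(\RR^d)$.
\begin{enumerate}[(1)]
\item Choose an approximate identity $(\varphi^{}_n)^{}_{n\in\NN}\subset\Ccinf(\RR^d)$ as in Remark~\ref{rem:appid}, and set $f^{}_n=f*\varphi^{}_n$.
\item By hypothesis $\mu*\varphi^{}_n\in SAP(\RR^d)$. Remark~\ref{rem:SAPstable} then gives $\mu*f^{}_n=(\mu*\varphi^{}_n)*f\in SAP(\RR^d)$, since $f\in L^1(\RR^d)$.
\item Since $\mu$ is translation bounded, $\mu*f$ is bounded. It is also uniformly continuous, because $|(\mu*f)(x)-(\mu*f)(y)|\le\|\tau^{}_xf^\dagger-\tau^{}_yf^\dagger\|^{}_\infty\,\|\mu\|^{}_{K}$ for a fixed compact $K$ containing the supports involved (for $|x-y|\le1$). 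Hence $\mu*f\in\Cu(\RR^d)$, using Lemma~\ref{lem:tbchar}(iv).
\item Rewrite $\mu*f^{}_n=(\mu*f)*\varphi^{}_n$. Remark~\ref{rem:appid} then shows that $\mu*f^{}_n\to\mu*f$ uniformly.
\item Since $SAP(\RR^d)$ is closed under uniform convergence, $\mu*f\in SAP(\RR^d)$.
\end{enumerate}

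The only point needing care is the associativity identity $\mu*(f*\varphi^{}_n)=(\mu*f)*\varphi^{}_n=(\mu*\varphi^{}_n)*f$. It follows from Fubini: all functions involved are compactly supported, so the double integrals against $|\mu|$ over compact sets are finite.
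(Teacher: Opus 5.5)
Your proposal is correct and follows the paper's route: the paper proves this lemma by repeating the approximate-identity argument from the proof of Lemma~\ref{lem:SAPchar}, exactly as you do. Your explicit verification that $\mu*f\in\Cu(\RR^d)$ via the $K$-norm, and your Fubini justification of the associativity identities, fill in details the paper leaves implicit.
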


\begin{proposition}[strong almost periodicity for measures]\label{thm:SAPTBM}
Let $\mu$ be a tempered measure. Then $\widehat \mu$ is a strongly almost periodic measure if and only if $\widehat \mu$ is a translation-bounded measure and $\mu$ is pure point. \qed
\end{proposition}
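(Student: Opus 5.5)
The plan follows the pattern of Propositions~\ref{prop:M1-theorem} and~\ref{prop:M-theorem}. We prove the two directions separately. Throughout, $\widehat\mu$ is assumed to be a measure for the statement to make sense. Since $\mu$ is tempered, $\widehat\mu$ is a tempered distribution, and we identify it with this measure.

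For the ``only if'' direction, suppose $\widehat\mu\in\mathsf{SAP}^{}_{\Cc}(\RR^d)$. Translation boundedness as a measure is immediate from the definition, since $\mathsf{SAP}^{}_{\Cc}(\RR^d)\subset\cM^\infty(\RR^d)$. For pure pointedness of $\mu$, we pass down the chain of test function spaces. As $\Ccinf(\RR^d)\subset \Cc(\RR^d)$ is dense with continuous inclusion, $\widehat\mu*\varphi\in SAP(\RR^d)$ for all $\varphi\in\Ccinf(\RR^d)$. Hence $\widehat\mu$ is a strongly almost periodic distribution, and it is tempered. Lemma~\ref{lem:SAP1} then gives that $\mu$ is a pure-point measure.

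For the ``if'' direction, suppose $\widehat\mu\in\cM^\infty(\RR^d)$ and $\mu$ is pure point. By the lemma just stated, it suffices to show that $\widehat\mu*f\in SAP(\RR^d)$ for every $f\in\Ccinf(\RR^d)$. We will argue as in Lemma~\ref{lem:SAP3}.
\begin{itemize}
\item[(i)] By Lemma~\ref{lem:TBMimpTBTD}, the measure $\widehat\mu$ is a translation-bounded tempered distribution, so $\widehat\mu*f\in\Cu(\RR^d)$. Uniform continuity follows, for instance, from $\widehat\mu*f$ having bounded derivatives $\widehat\mu*\partial f$.
\item[(ii)] Take the approximate identity $(\varphi_n)$ of Schwartz functions with compactly supported Fourier transform, as described before Lemma~\ref{lem:SAP3}, and set $f_n=f*\varphi_n$.
\item[(iii)] The product $\mu\cdot\widecheck{\varphi_n}\cdot\widecheck f$ is a finite pure-point measure, because $\widecheck{\varphi_n}\widecheck f$ is compactly supported and $\mu$ is pure point. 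Consequently $\widehat\mu*f_n$ is its Fourier transform and lies in $SAP(\RR^d)$.
\item[(iv)] Since $\widehat\mu*f_n=(\widehat\mu*f)*\varphi_n\to\widehat\mu*f$ uniformly and $SAP(\RR^d)$ is closed in the supremum norm, we conclude $\widehat\mu*f\in SAP(\RR^d)$.
\end{itemize}

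The main technical point is step (iii): the identity $\widehat\mu*f_n=\widehat{\mu\cdot\widecheck{f_n}}$, which involves convolution of the measure $\widehat\mu$ with a Schwartz function rather than a compactly supported one. This holds because it is true at the level of tempered distributions, and $\widehat\mu$, being translation bounded, acts on Schwartz functions by integration (Lemma~\ref{lem:tbchar} and Lemma~\ref{lem:st}). The convolution in the measure sense therefore agrees with the distributional one. With this identification, both directions reduce to results already established.
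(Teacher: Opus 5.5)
Your proposal is correct and follows the paper's argument. The ``only if'' part goes through Lemma~\ref{lem:SAP1}, and the ``if'' part combines the reduction lemma for $\Cc$ versus $\Ccinf$ test functions with the argument of Lemma~\ref{lem:SAP3}; you reprove that argument inline instead of citing Proposition~\ref{prop:M1-theorem}, taking translation boundedness from Lemma~\ref{lem:TBMimpTBTD}. The identification issue you flag as the main technical point is harmless but not needed: the final conclusion only concerns $\widehat\mu*f$ with $f\in\Ccinf(\RR^d)$, where measure and distributional convolution agree trivially.
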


For Poisson measures, we thus get the following characterisation of strong almost periodicity. Recall that every crystalline measure is a Poisson measure. Characterisation (c) for measures already appears in \cite[Lem.~5.8]{Mey18}.

\begin{theorem}[strong almost periodicity in Poisson measures]\label{thm:sappois}
For any Poisson measure $\mu$, the following hold true.
\begin{itemize}
\item[(a)] $\mu$ is a~strongly almost periodic distribution if and only if $\mu$ is a translation-bounded distribution. 
\item[(b)] $\mu$ is a~strongly almost periodic tempered distribution if and only if $\mu$ is a translation-bounded tempered distribution. 
\item[(c)] $\mu$ is a~strongly almost periodic measure if and only if $\mu$ is a translation-bounded measure. \qed
\end{itemize} 
\end{theorem}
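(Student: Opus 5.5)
The plan is to derive all three statements from Propositions~\ref{prop:M1-theorem}, \ref{prop:M-theorem} and~\ref{thm:SAPTBM}. To do so, I will apply them not to $\mu$ itself but to a tempered measure $T$ whose Fourier transform is $\mu$. Let $\mu$ be a Poisson measure. Then $\widehat\mu$ is a pure-point measure, and it is tempered since it is the Fourier transform of a tempered distribution. I set $T := \widecheck{\mu}$, the inverse Fourier transform. Concretely, $T=\widehat{\mu}^{\dagger}$, the reflection of the measure $\widehat\mu$, because $\widehat{\widehat\mu}=\mu^\dagger$. Reflection preserves both the measure property and pure-pointedness, so $T$ is a tempered pure-point measure with $\widehat T=\mu$.

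For (a), Proposition~\ref{prop:M1-theorem} applied to $T$ states that $\widehat T=\mu$ is a strongly almost periodic distribution if and only if $\mu$ is a translation-bounded distribution and $T$ is pure point. Since $T$ is pure point by construction, this reduces exactly to the equivalence claimed in (a). Statement (b) follows in the same way from Proposition~\ref{prop:M-theorem}. For (c), I use Proposition~\ref{thm:SAPTBM} with $T$ in the role of its tempered measure. It shows that $\widehat T=\mu$ is a strongly almost periodic measure if and only if $\mu$ is a translation-bounded measure, because the pure-point hypothesis on $T$ is again automatic. In each part, the ``only if'' direction is in any case immediate, since strong almost periodicity always implies translation boundedness.

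The only point requiring care is the bookkeeping between $\mu$, $\widehat\mu$ and $\widecheck\mu$. I must confirm that the Poisson hypothesis, which is stated on $\mu$ and $\widehat\mu$, transfers to $T=\widecheck\mu$ with $\widehat T=\mu$. This holds because inverse and forward Fourier transforms differ only by the reflection $\dagger$, and reflection preserves being a tempered, pure-point measure. I do not expect any real obstacle: all analytic content sits in Lemmas~\ref{lem:SAP1} and~\ref{lem:SAP3} and the approximate identity arguments that produce the three propositions.
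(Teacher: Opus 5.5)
Your proposal is correct and matches the paper's argument. The theorem is stated there as an immediate consequence of Propositions~\ref{prop:M1-theorem}, \ref{prop:M-theorem} and~\ref{thm:SAPTBM}, applied to a pure-point tempered measure whose Fourier transform is $\mu$. Your choice $T=\widecheck{\mu}=(\widehat\mu)^\dagger$, with $\widehat T=\mu$, supplies that measure correctly.
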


Let us recall that Meyer's conjecture has already been proved for the class of Fourier quasicrystals \cite[Cor.~2]{Fav24b}. This can be seen as a consequence of Theorem~\ref{thm:sappois} (b), in view of Lemma~\ref{lem:temptb}. 
Theorem~\ref{thm:sappois} (b) can also be used to prove Meyer's almost periodicity conjecture for positive Poisson measures, compare Favorov's statement \cite[Thm.~1]{Fav24}. Indeed, positivity implies translation-boundedness as a measure by the following lemma, and translation-boundedness as a measure implies translation-boundedness as a tempered distribution by Lemma~\ref{lem:TBMimpTBTD}.

\begin{lemma}\label{lem:poswap}
Assume that $\mu$ is a positive tempered measure whose Fourier transform is a~measure. Then $\mu$ is a translation-bounded measure, and $\widehat \mu$ is a translation-bounded tempered distribution.
\end{lemma}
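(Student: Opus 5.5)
The plan is to prove translation boundedness of $\mu$ first, by a positive-definiteness style argument on the Fourier side, and then obtain the second claim from Lemma~\ref{lem:TBMimpTBTD} applied to $\widehat\mu$ viewed through duality.

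\textbf{Step 1: $\mu$ is translation bounded.} Fix $\psi\in\Ccinf(\RR^d)$ and set $\varphi=\psi*\widetilde\psi$, so that $\widehat\varphi=|\widehat\psi|^2\geq 0$. Since $\mu$ is positive, for every $x$ we have
\[
(\mu*|\widehat\psi|^2)(x)\,=\,\int |\widehat\psi(x-y)|^2\,\dd\mu(y)\,\geq\,0 .
\]
On the other hand, this quantity equals $\langle \widehat\mu,\ \chi\,\varphi^{\dagger}\rangle$-type expression: precisely, $\mu*\widehat\varphi$ is the inverse Fourier transform of the measure $\widehat\mu\cdot\varphi$ (up to reflection), and since $\widehat\mu$ is a measure and $\varphi$ has compact support, $\widehat\mu\cdot\varphi$ is a finite measure. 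Hence
\[
0\,\leq\,(\mu*|\widehat\psi|^2)(x)\,\leq\,\int|\varphi|\,\dd|\widehat\mu| \,<\,\infty
\]
uniformly in $x$. Choosing $\psi$ with $|\widehat\psi|^2\geq c>0$ on the unit cube $K$ (possible since $\widehat\psi(0)=\int\psi\neq0$, after rescaling), positivity gives $\mu(x+K)\leq c^{-1}(\mu*|\widehat\psi|^2)(x)$ bounded, so $\|\mu\|_K<\infty$ and $\mu\in\cM^\infty(\RR^d)$ by Lemma~\ref{lem:tbchar}. The main care needed here is justifying the identity $\mu*\widehat\varphi=(\widehat\mu\cdot\varphi)^{\vee}$ pointwise; I would verify it distributionally (both sides tempered, test against Schwartz functions) and then note both sides are continuous.

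\textbf{Step 2: $\widehat\mu$ is a translation-bounded tempered distribution.} Since $\mu$ is translation bounded, it is strongly tempered by Lemma~\ref{lem:tbchar}(iii) and Lemma~\ref{lem:st}. Then Lemma~\ref{lem:temptb} applies directly: $\widehat\mu$ is a translation-bounded tempered distribution. This step is routine.

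The expected obstacle is only the technical justification in Step 1 of the convolution/Fourier identity for a tempered measure whose transform is merely a (possibly non-strongly-tempered) measure; since $\varphi$ is compactly supported, $\widehat\mu\cdot\varphi$ is a finite measure regardless of growth, so this should go through.
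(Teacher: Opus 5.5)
Your proof is correct. For the first claim it takes a more hands-on route than the paper; for the second claim it follows the paper.

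The paper first shows that $\widehat\mu(\varphi*\widetilde\varphi)=\mu(|\widehat\varphi|^2)\geq 0$ for $\varphi\in\cS(\RR^d)$. It extends this by density to $\varphi\in\Cc(\RR^d)$, so that $\widehat\mu$ is a positive definite measure. It then cites \cite[Prop.~4.9]{BF75}: the Fourier transform of a positive definite measure, here $\mu$ up to reflection, is translation bounded.

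You instead bound $\mu*|\widehat\psi|^2$ directly by the total mass of the compactly supported finite measure $\varphi^\dagger\cdot\widehat\mu$. This is essentially the mechanism inside the Berg--Forst result, unwound in this concrete setting.

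Your version is self-contained. It needs neither the density extension to $\Cc(\RR^d)$ nor the identification, implicit in the paper, of the measure-theoretic Fourier transform of $\widehat\mu$ in the sense of Berg--Forst with the distributional one. The price is the justification you flag, which has two parts, and both go through as you indicate:
\begin{itemize}
\item $(T_\mu*\widehat\varphi)(x)$ equals $\int\widehat\varphi(x-y)\,\dd\mu(y)$. This uses that a positive tempered measure is strongly tempered, by Lemma~\ref{lem:st}(ii),(v).
\item $\widehat{T_\mu*\widehat\varphi}=\varphi^\dagger\,\widehat\mu$ is a finite measure. This holds because $\varphi^\dagger f\in\Ccinf(\RR^d)$ for every $f\in\cS(\RR^d)$, so the distribution acts through the Radon measure $\widehat\mu$. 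Both sides are then continuous functions that agree as distributions, hence pointwise.
\end{itemize}
Your rescaling choice of $\psi$, giving $|\widehat\psi|^2\geq c$ on the unit cube, is also fine. Your Step~2 coincides with the paper's final step.

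One small correction: your opening sentence announces Lemma~\ref{lem:TBMimpTBTD} applied to $\widehat\mu$. That would not work, since $\widehat\mu$ is not known to be translation bounded as a measure. The lemma you actually and correctly use in Step~2 is Lemma~\ref{lem:temptb}.
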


\begin{proof}
As $\mu$ is a tempered distribution, we have  $\widehat \mu(\varphi * \widetilde \varphi)=\mu(|\widehat \varphi|^2)\geq 0$ for any $\varphi\in \cS(\RR^d)$, where $\widetilde \varphi(x)=\overline{\varphi(-x)}$. Hence, $\widehat \mu$ is a positive definite tempered distribution. As $\Ccinf(\RR^d)$ is dense in $\Cc(\RR^d)$ with respect to the supremum norm, we can argue $\widehat \mu(\varphi * \widetilde \varphi)\geq 0$ for any $\varphi\in \Cc(\RR^d)$. Thus, the measure $\widehat \mu$ is positive definite as a measure. As a consequence, its Fourier transform $\mu$ is translation bounded \cite[Prop.~4.9]{BF75}. And as $\mu$ is strongly tempered (as it is positive and tempered), the statement about $\widehat\mu$ now follows from Lemma~\ref{lem:temptb}.
\end{proof}

\section{Counterexamples to Meyer's almost periodicity conjecture}

Favorov gave an example of a crystalline measure that is not strongly almost periodic as a tempered distribution. Based on Favorov's construction, we give another example of a crystalline measure that is not strongly almost periodic as a general distribution. Thus, within the class of crystalline measures, strong almost periodicity might be violated in its weakest form.

\subsection{Favorov's counterexample}\label{sec:FCE}
Reading the arguments leading to the counterexample to Meyer's conjecture \cite[Theorem]{Fav24c}, the author actually proves the following.

\begin{theorem}[Favorov's measure $\nu^{}_{\mathrm{Fav}}$]\label{thm-fav} 
There exists a measure $\nu^{}_{\mathrm{Fav}}\in\cM(\RR)$ with the following properties:
\begin{itemize}
    \item[(a)] $\nu^{}_{\mathrm{Fav}}$ is a tempered pure-point measure with locally finite support, which can be chosen so that $\supp(\nu^{}_{\mathrm{Fav}}) \cap \QQ = \varnothing$.
    \item[(b)] Its Fourier transform $\widehat{\nu^{}_{\mathrm{Fav}}}$ as a tempered distribution is a tempered pure-point measure with locally finite support, such that $\supp(\widehat{\nu^{}_{\mathrm{Fav}}}) \subset \QQ$.
    \item[(c)] $\widehat{\nu^{}_{\mathrm{Fav}}} \notin \cS'_{\infty}(\RR)$. In particular, $\nu^{}_{\mathrm{Fav}}$ is not strongly tempered. \qed
\end{itemize} 
\end{theorem}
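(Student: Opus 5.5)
We construct the measure from Kolountzakis-type building blocks, using the periodic Fourier eigenmeasures of Lemma~\ref{lem:eigenmesures} in place of Favorov's original lattice-supported pairs. The idea is a modulated, perturbed version of Remark~\ref{rem:choice_y}~(iv), with \emph{superpolynomially} growing weights, which is why no growth assumption in Remark~\ref{rem:choice_y}~(iii) can be used directly.

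Take $\sigma_n=\mu_{k_n}$ as in Remark~\ref{rem:choice_y}~(i). On the Fourier side, set
\[
\widehat\nu \;=\; \sum_n a_n\, \sigma_n ,
\]
which is supported in $\QQ$ and has locally finite support because of the nested gaps. To make the weights superpolynomial while keeping temperedness, modify each block by convolving with a finite difference operator
\[
\Delta_{h_n}^{p_n} \;=\; (\delta_0-\delta_{h_n})^{*p_n}
\]
of irrational step $h_n$ and large order $p_n$. On the physical side this multiplies by $(1-\ee^{-2\pi \ii h_n x})^{p_n}$, so $\nu=\sum_n a_n\,(1-\ee^{-2\pi\ii h_n x})^{p_n}\,\widecheck{\sigma_n}$. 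On the Fourier side the $n$-th block of $\widehat{\nu}$ becomes $a_n\,\Delta_{h_n}^{p_n}\ast\sigma_n$, whose support is the finite union of translates $\supp(\sigma_n)+jh_n$, $0\le j\le p_n$.

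For part~(a), the point is that a finite difference of high order paired with a Schwartz function gains a factor $h_n^{p_n}$ times a derivative norm. Choosing $h_n$ tiny, say $h_n^{p_n}a_n\le k_n^{-n}$, makes the series $\sum_n a_n\,\Delta^{p_n}_{h_n}\ast\sigma_n$ converge in $\cS'$ even though $a_n$ is huge. The supports then sit inside the gaps, so the sum is a locally finite pure-point measure. Taking the Fourier transform blockwise with $\widehat{\sigma_n}=\lambda\sigma_n$ shows that $\nu$ is itself a locally finite pure-point measure. One point needs care: the theorem puts $\supp\nu\cap\QQ=\varnothing$ and $\supp\widehat\nu\subset\QQ$, which conflicts with the irrational shifts just described. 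So the shifts should be placed on the side that is required to avoid $\QQ$, i.e.\ swap the roles, taking rational $h_n$ on the side required to lie in $\QQ$ and irrational modulations on the other. In any case the structural step is: a convergent sum of finitely supported perturbations of $k_n\ZZ$-periodic blocks whose supports become disjoint at scale $k_n$. Part~(b) then follows by the same blockwise Fourier computation.

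For part~(c), we choose $a_n$ growing so fast that $\widehat\nu\ast\varphi$ is unbounded for a fixed Schwartz $\varphi$. Pick the points $y_n$ from Remark~\ref{rem:choice_y}~(ii), where only the $n$-th block contributes near $y_n$. There, $(\widehat\nu\ast\varphi)(y_n)$ is essentially $a_n(\Delta^{p_n}_{h_n}\varphi)(\,\cdot\,)$, whose size is about $a_n h_n^{p_n}\|\varphi^{(p_n)}\|$. The tails from the other blocks are controlled by the Schwartz decay of $\varphi$ across the gaps.

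The main obstacle is reconciling two competing requirements. Temperedness needs $a_nh_n^{p_n}$ times a seminorm of order $p_n$ to stay summable. Translation unboundedness needs $a_nh_n^{p_n}|\varphi^{(p_n)}(\cdot)|$ to be unbounded for one fixed $\varphi$. My first attempt is to take $\varphi$ Gaussian, using the factorial growth of its derivatives, and to choose $p_n$ and $h_n$ so that the Schwartz seminorms with polynomial weight $(1+|x|)^{p_n}$ at distance $k_n$ still give summability.

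Finally, once $\widehat{\nu}\notin\cS'_\infty(\RR)$ is established, $\nu$ cannot be strongly tempered: otherwise Lemma~\ref{lem:temptb} would make $\widehat\nu$ translation bounded.
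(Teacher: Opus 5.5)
\textbf{There is a genuine gap, and it is in part (c): you have put the finite differences on the wrong side.}

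In your set-up, $\widehat\nu=\sum_n a_n\,\Delta^{p_n}_{h_n}\ast\sigma_n$ is the difference side. Its temperedness is supposed to come from the gain produced by the differences, but that gain is uniform under translations. Each $\sigma_n$ is $k_n\ZZ$-periodic with weights of modulus at most one on $\frac{1}{k_n}\ZZ$. Hence $\tau_{-x}\sigma_n$ satisfies the same hypotheses, and the proof of Lemma~\ref{lem-tech} goes through with a constant independent of the shift $x$. At first order this gives $|a_n(\tau_h\sigma_n-\sigma_n)(\tau_x f)|\le a_n\,h\,C k_n^2\,\|(1+s^2)f'(s)\|_\infty$ uniformly in $x$, and the $p$-th order case only adds a factor $2^{p-1}$.

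Your convergence argument in $\cS'(\RR)$ needs these constants to be summable. But then $\sup_x|(\widehat\nu\ast\varphi)(x)|<\infty$ for every $\varphi\in\cS(\RR)$, i.e.\ $\widehat\nu\in\cS'_\infty(\RR)$, which is exactly the negation of (c). So the two ``competing requirements'' you list are incompatible, not merely in tension, and no choice of $\varphi$ (Gaussian or otherwise) can rescue it.

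Separately, the condition $h_n^{p_n}a_n\le k_n^{-n}$ does not give convergence in $\cS'(\RR)$ when $p_n\to\infty$. The quantity $\|f^{(p_n)}\|$ is not dominated by any fixed Schwartz seminorm, and some $f\in\Ccinf(\RR)$ have derivatives of order $p_n$ outgrowing any prescribed sequence. A correct estimate must use a fixed order, e.g.\ $|\Delta^{p}_h f|\le 2^{p-1}h\,\|f'\|_\infty$. Then the high order buys nothing, and first-order differences as in Proposition~\ref{cor1} suffice.

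\textbf{What is needed instead.} The paper only cites Favorov here. His key step, the difference $\varphi(x-t_n)-\varphi(x)$ arising when one evaluates $\nu^{}_{\mathrm{Fav}}(\varphi)$, shows the roles are the reverse of yours. The translations sit on $\nu$ and the modulations on $\widehat\nu$, exactly like $\sigma$ and $\Sigma$ in Proposition~\ref{cor1}.

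For the support conditions, take e.g.\ $\nu=\sum_n a_n(\tau_{t_n}\sigma_n-\tau_{s_n}\sigma_n)$ with $1,t_1,s_1,t_2,s_2,\ldots$ linearly independent over $\QQ$. Then $\supp(\nu)\cap\QQ=\varnothing$, and $\widehat\nu=\lambda\sum_n a_n(\ee^{2\pi\ii t_n x}-\ee^{2\pi\ii s_n x})\sigma_n$ is supported in $\QQ$. Your proposed repair (rational steps on one side, irrational modulations on the other) cannot achieve this. Multiplication never moves supports, the modulation frequencies are the shifts themselves, and $\Delta^{p_n}_{h_n}$ contains $\delta_0$.

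With this arrangement, $\nu$ carries isolated point masses of modulus $a_n$ at points $y_n-t_n$ (as in Lemma~\ref{Lem3.16}). The missing idea is to turn these into unboundedness of the \emph{modulated} side $\widehat\nu$. Favorov does this with an explicit Schwartz function whose Fourier transform is a sum of disjointly supported bumps at these points.

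Within this paper you can argue as follows. If $\widehat\nu\in\cS'_\infty(\RR)$, then $\widehat\nu\in\mathsf{SAP}^{}_{\cS}(\RR)$ by Proposition~\ref{prop:M-theorem}. Lemma~\ref{lem-bohr} applied with the Gaussian $\ee^{-\pi x^2}$ then forces $\sum_k \ee^{-2\pi k^2}|\nu(\{k\})|^2<\infty$. This fails once $a_n\ge \ee^{\pi(|y_n|+1)^2}$, where $t_n,s_n$ are chosen afterwards so that $\sum_n a_n(t_n+s_n)k_n^2<\infty$.

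Your final step, deducing from Lemma~\ref{lem:temptb} that $\nu$ is not strongly tempered, is fine.
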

Thus, Favorov provides an example of a tempered distribution $\widehat{\nu^{}_{\mathrm{Fav}}}\in \cS'(\RR)$ that is not translation bounded as a tempered distribution. In particular, he finds a Schwartz function $\varphi\in \cS(\RR)$ such that $\widehat{\nu^{}_{\mathrm{Fav}}}*\varphi$ is unbounded. 
In fact, Favorov's function $\varphi$ is not supported on a compact set. Its Fourier transform (see his Lemma 2) is an infinite sum of non-negative $\Ccinf(\RR)$-functions with mutually disjoint supports. Now, if $\varphi$ were compactly supported, its Fourier transform would be the restriction of an entire function. Thus, its zero set cannot contain a nonempty open interval, which is contradictory.

\smallskip

Whereas Favorov's measure $\nu^{}_{\mathrm{Fav}}$ is not strongly tempered, it is unclear whether it is translation unbounded as a tempered distribution. However, Favorov's measure leads to the following interesting $\frac{1}{2}$-example of a measure $\mu$ such that both $\mu$ and $\widehat \mu$ are translation unbounded as a tempered distribution.  We call it a half example because, in the proof, we provide two examples and show that one of them works, but we do not specify which. The underlying construction will later be used in our first example, see Theorem~\ref{tnm:ex1}.

\begin{theorem}[$\frac{1}{2}$-example]\label{thm-half} 
There exists a crystalline measure $\mu$, such that neither $\mu$ nor $\widehat{\mu}$ is translation bounded as a tempered distribution. In particular, neither $\mu$ nor $\widehat{\mu}$ are strongly tempered measures. 
\end{theorem}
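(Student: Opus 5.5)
The plan is to combine Favorov's measure with its Fourier transform. Take $\nu=\nu^{}_{\mathrm{Fav}}$ from Theorem~\ref{thm-fav}, and consider the two candidates
\[
\mu^{}_{+}\,=\,\nu+\widehat{\nu}\qquad\text{and}\qquad \mu^{}_{-}\,=\,\nu-\widehat{\nu} .
\]
Both are tempered pure-point measures. Their supports lie in $\supp(\nu)\cup\supp(\widehat\nu)$, a finite union of locally finite sets, hence locally finite. Since $\widehat{\widehat\nu}=\nu^\dagger$, we get $\widehat{\mu^{}_{\pm}}=\widehat\nu\pm\nu^\dagger$. Reflection preserves being a pure-point measure with locally finite support, so $\widehat{\mu^{}_\pm}$ is again such a measure, and both $\mu^{}_\pm$ are crystalline measures.

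The key point is to use the disjointness of supports from Theorem~\ref{thm-fav}(a),(b): $\supp(\nu)\cap\QQ=\varnothing$, while $\supp(\widehat\nu)\subset\QQ$. Also $\supp(\nu^\dagger)\cap\QQ=\varnothing$. Hence $\mu^{}_\pm$ restricted to $\QQ$ equals $\widehat\nu$, and $\widehat{\mu^{}_\pm}$ restricted to $\QQ$ equals $\widehat\nu$ as well. I argue by contradiction in the following form. If both $\mu^{}_+$ and $\mu^{}_-$ were translation bounded as tempered distributions, then so would be $\tfrac12(\mu^{}_++\mu^{}_-)=\nu$ and $\tfrac12(\mu^{}_+-\mu^{}_-)=\widehat\nu$. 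But $\widehat\nu\notin\cS'_\infty(\RR)$ by Theorem~\ref{thm-fav}(c). So at least one of $\mu^{}_\pm$ fails to be translation bounded. The same argument applied to $\widehat{\mu^{}_\pm}=\widehat\nu\pm\nu^\dagger$ shows that at least one of $\widehat{\mu^{}_\pm}$ fails as well.

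The difficulty is that these two failures need not occur for the same sign. To get a single $\mu$ failing on both sides, I would instead use a Fourier-symmetric combination. For $\lambda\in\{\pm1,\pm\ii\}$, set
\[
\mu^{}_\lambda\,=\,\nu+\overline{\lambda}\,\widehat\nu+\overline{\lambda}^2\,\nu^\dagger+\overline{\lambda}^3\,\widehat\nu^\dagger .
\]
This is the standard projection onto the $\lambda$-eigenspace, so $\widehat{\mu^{}_\lambda}=\lambda\mu^{}_\lambda$ and each $\mu^{}_\lambda$ is a crystalline measure (supports are again finite unions of locally finite sets). Summing over the four eigenvalues gives $\sum_\lambda \lambda\,\mu^{}_\lambda = 4\widehat\nu$. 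If every $\mu^{}_\lambda$ were in $\cS'_\infty(\RR)$, then $\widehat\nu\in\cS'_\infty(\RR)$, which contradicts Theorem~\ref{thm-fav}(c). Hence some $\mu=\mu^{}_\lambda$ is not translation bounded as a tempered distribution. Since $\widehat\mu=\lambda\mu$ with $|\lambda|=1$, $\widehat\mu$ is not translation bounded either. This is exactly a ``half example'': one of four explicit candidates works, but we do not identify which. Note that $\mu^{}_\lambda\ne0$ need not be checked separately, since translation unboundedness already forces it.

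Finally, the strong-temperedness claim follows from Lemma~\ref{lem:temptb}. If $\widehat\mu$ were strongly tempered, then $\widehat{\widehat\mu}=\mu^\dagger$ would be translation bounded, hence so would $\mu$, which is a contradiction; symmetrically for $\mu$. The main obstacle was ensuring that \emph{both} $\mu$ and $\widehat\mu$ fail simultaneously, and the eigenmeasure decomposition handles this. The only check remaining is that reflections preserve translation (un)boundedness and the crystalline property, which is routine.
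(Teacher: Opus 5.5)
Your proof is correct, but it is organised differently from the paper's. The paper splits into two cases according to whether Favorov's measure $\nu=\nu^{}_{\mathrm{Fav}}$ itself is translation bounded as a tempered distribution. If $\nu\notin\cS'_\infty(\RR)$, then $\mu=\nu$ already works, because $\widehat\nu\notin\cS'_\infty(\RR)$ by Theorem~\ref{thm-fav}(c). If $\nu\in\cS'_\infty(\RR)$, the paper takes $\mu=\nu+\widehat\nu$. Then both $\mu$ and $\widehat\mu=\widehat\nu+\nu^\dagger$ are a translation-bounded term plus a translation-unbounded one.

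This case split is what removes the difficulty you hit with $\mu^{}_\pm$. Once $\nu$ itself is allowed as a candidate, the failures for $\mu$ and $\widehat\mu$ occur together with only two candidates. The argument uses nothing beyond the vector-space structure of $\cS'_\infty(\RR)$ and its invariance under reflection.

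Your eigenprojection argument replaces the case split with the single identity $\sum_\lambda \lambda\mu^{}_\lambda=4\widehat\nu$. The relation $\widehat{\mu^{}_\lambda}=\lambda\mu^{}_\lambda$ then makes the failure for $\widehat\mu$ automatic, with no second estimate needed. As a bonus, your example is a crystalline Fourier eigenmeasure that is translation unbounded as a tempered distribution. The paper obtains such an eigenmeasure only later, in Theorem~\ref{tnm:ex1}, explicitly and in the stronger distributional sense, by a similar symmetrisation $\sigma+\cF(\sigma)+\cF^2(\sigma)+\cF^3(\sigma)$. The cost is that your statement is less constructive: one of four candidates with an unknown eigenvalue, versus one of two candidates, one of which is Favorov's measure itself.

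Your preliminary discussion of $\mu^{}_\pm$ and of restrictions to $\QQ$ is not needed. It also contains a sign slip: $\mu^{}_-$ restricted to $\QQ$ is $-\widehat\nu$, not $\widehat\nu$. Your final argument does not depend on any of this.
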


\begin{proof}
In order to simplify the presentation of our argument, we always mean ``translation bounded as a tempered distribution" if we use the term ``translation bounded" in the following. 

\smallskip

\noindent We only need to show that neither $\mu$ nor $\widehat{\mu}$ is translation bounded. Once we do this, the fact that $\widehat \mu$ is not translation bounded implies that $\mu$ is not strongly tempered, by applying Lemma~\ref{lem:temptb}. On the other hand, the fact that $\mu$ is not translation bounded implies that $\widehat \mu$ is not strongly tempered.

\smallskip

\noindent Let $\nu^{}_{\mathrm{Fav}}$ be the measure from Theorem~\ref{thm-fav}. We split the proof into two cases.

\smallskip

\noindent\emph{Case 1:} $\nu^{}_{\mathrm{Fav}}$ is not translation bounded. Then $\mu:= \nu^{}_{\mathrm{Fav}}$ satisfies the required properties.

\smallskip

\noindent\emph{Case 2:} $\nu^{}_{\mathrm{Fav}}$ is translation bounded. Define 
\[
\mu\, :=\, \nu^{}_{\mathrm{Fav}}+\widehat{\nu^{}_{\mathrm{Fav}}} \,.
\]
Since $\nu^{}_{\mathrm{Fav}}$ is translation bounded and $\widehat{\nu^{}_{\mathrm{Fav}}}$ is not, neither is $\mu$, as follows from the estimate
\[
|(\mu \ast \varphi)(x)| \,= \, \bigl|(\nu^{}_{\mathrm{Fav}}\ast \varphi)(x)+(\widehat{\nu^{}_{\mathrm{Fav}}}\ast \varphi)(x)\bigr| \, \geq \, 
\bigl|(\widehat{\nu^{}_{\mathrm{Fav}}}\ast \varphi)(x)\bigr| - \bigl|(\nu^{}_{\mathrm{Fav}}\ast \varphi)(x)\bigr| \ .
\]
The same argument shows that
\[
\widehat{\mu}\, =\,  \widehat{\nu^{}_{\mathrm{Fav}}}+\nu^{\dagger}_{\mathrm{Fav}}
\]
is not translation bounded. 
\end{proof}

A key ingredient of Favorov's proof is the following. One picks a~sequence $(t^{}_{k})^{}_{k}$ such that $t^{}_{k} \longrightarrow 0$ as $k\to \infty$, and subsequently, when evaluating $\nu^{}_{\mathrm{Fav}}(\varphi)$, one has to deal with expressions of the form 
\[ \varphi(x-t^{}_n) - \varphi(x)\, ,\]
where $\varphi \in \cS(\RR)$. Since 
\[ \varphi(x-t^{}_n) - \varphi(x) \, = \,t^{}_{n} \, \myfrac{\varphi(x-t^{}_n) - \varphi(x)}{t^{}_{n}} \, ,\]
we have
\[ \myfrac{\varphi(x-t^{}_n) - \varphi(x)}{t^{}_{n}} \, \xrightarrow{\ n\to \infty\ }\,  D\varphi (x) \]
in the distributional sense (and here, $D$ denotes the derivative). As $\varphi' \in \cS(\RR)$, it has bounded derivative and one can choose $(t^{}_{n})^{}_{n\in \NN}$ converging fast enough to ensure the convergence. 
This, nevertheless, does not remain true once one moves outside of $\cS(\RR)$.

We will use this trick in a slightly different way as we construct our examples in the upcoming sections.

\subsection{A second counterexample}\label{sec:sapdistr}
Motivated by Favorov's arguments, we construct a~crystalline measure that is unbounded as a distribution.
Thus, Meyer's almost periodicity conjecture does not hold for the weakest form of almost periodicity at hand.  To do that, we will rely, again, on the existence of Fourier eigenmeasures with certain properties, as stated in Lemma~\ref{lem:eigenmesures} and with properties mentioned in Remark~\ref{rem:choice_y}. 

\smallskip

Our construction is described in Theorem~\ref{tnm:ex1}. We choose any compactly supported test function and define a crystalline measure such that its convolution with the chosen test function (multiplied by a particular character) is unbounded. We first construct a general family of crystalline measures (Proposition~\ref{cor1}). We will then use an auxiliary result (Lemma~\ref{lem-bohr}) concerning the Fourier--Bohr coefficients of Bohr almost periodic functions to derive the desired result.

\smallskip

Now, we establish a tool for constructing crystalline measures in Proposition~\ref{cor1}, which extends Favorov's construction. To show that this leads to tempered measures, we rely on the following estimate for the periodic pure-point measures from Lemma~\ref{lem:eigenmesures}.

\begin{lemma} 
\label{lem-tech}
Fix any $m\in\NN$.
Let $\mu$ be any pure-point measure on the line that is supported within $\frac{1}{m}\ZZ$ and that is  $m\ZZ$-periodic, with complex weights inside the unit disk. 
Let $f \in C^1(\RR)$ be any function such that 
\[
    \bigl\| (1+x^2) f'(x) \bigr\|^{}_{\infty} < \infty \ .
\]
We then have for any $t\in (0,1)$ the estimate
\[
\bigl| \ts (\tau^{}_{t}\mu-\mu)(f) \bigr| \, \leq \,  \, t \cdot (4m^2+m) \cdot \bigl\| (1+x^2) f'(x) \bigr\|^{}_\infty \ .
\]
\end{lemma}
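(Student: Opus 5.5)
We expand the pairing directly. Write $\mu=\sum_{x\in\frac1m\ZZ} c^{}_x\delta^{}_x$ with $|c^{}_x|\le 1$ and $c^{}_{x+m}=c^{}_x$. Then
\[
(\tau^{}_t\mu-\mu)(f) \,=\, \sum_{x\in \frac1m\ZZ} c^{}_x\bigl(f(x+t)-f(x)\bigr) .
\]
By the mean value theorem, $|f(x+t)-f(x)|\le t\sup_{\xi\in[x,x+t]}|f'(\xi)|$. Put $C:=\|(1+x^2)f'\|^{}_\infty$. Then for $\xi\in[x,x+t]$ with $t\in(0,1)$ we have $|f'(\xi)|\le C/(1+\xi^2)$. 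So the whole estimate reduces to bounding
\[
\sum_{x\in\frac1m\ZZ}\ \sup_{\xi\in[x,x+1]}\myfrac{1}{1+\xi^2}
\]
by $4m^2+m$, using $|c^{}_x|\le1$. The periodicity is in fact not needed beyond the weight bound. The absolute convergence this yields also justifies splitting the sum, since $f$ need not be integrable against $\mu$ otherwise.

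To bound the lattice sum, note that $\sup_{\xi\in[x,x+1]}(1+\xi^2)^{-1}$ equals $1$ when $x\in[-1,0]$. It is at most $(1+(x+1)^2)^{-1}$ for $x<-1$ and at most $(1+x^2)^{-1}$ for $x>0$. Group the points $x=j/m$ into unit blocks $[n,n+1)$, $n\in\ZZ$; each block contains exactly $m$ lattice points. The blocks with $n\in\{-1,0\}$, together with the point $x=0$, contribute at most about $2m$, and every point there is trivially bounded by $1$. For a block with $n\ge1$, each term is at most $1/(1+n^2)\le 1/n^2$, giving $m/n^2$. By symmetry, blocks with $n\le -2$ give $m/(n+1)^2$ with $n+1\le -1$. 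Summing yields roughly $2m+2m\cdot\pi^2/6\le 2m+4m$. This is already bounded by $4m^2+m$ for $m\ge2$.

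The main nuisance is only bookkeeping to land exactly on the stated constant $4m^2+m$, in particular for the case $m=1$. With the cruder approach the constant is easily met. Alternatively, one can compare the sum with an integral: $\frac1m\sum g(j/m)\le \int g+\frac{2}{m}\|g\|_\infty$-type bounds give $m\pi+O(1)$. If the generous constant $4m^2$ is intended to absorb counting with the weaker bound $\sup|f'|\le C$ on a central window of length of order $m$, I would instead bound the central range $|x|\le m$ by $2m^2+1$ points times $tC$, and the tails by $2m\sum_{n\ge1}n^{-2}$. Either route gives the claim, and no step is a genuine obstacle.
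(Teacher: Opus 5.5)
Your reduction is the same as the paper's. You expand $(\tau_t\mu-\mu)(f)=\sum_{x}c_x\bigl(f(x+t)-f(x)\bigr)$, apply the mean value theorem with $|f'(\xi)|\le C/(1+\xi^2)$, and reduce to bounding $\sum_{x\in\frac1m\ZZ}\sup_{\xi\in[x,x+t]}(1+\xi^2)^{-1}$. Your remark that this also makes the pairing absolutely convergent is a fair point that the paper leaves implicit. The gap is in the final count. That is exactly where the stated constant $4m^2+m$ lives, and you leave it unfinished.

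There are two concrete problems. First, your bound for the blocks $n\le-2$ is off by one. For $x\in[n,n+1)$ one only has $(1+(x+1)^2)^{-1}<(1+(n+2)^2)^{-1}$, and this is nearly attained. For example, with $n=-3$, $m\ge2$ and $x=-2-\tfrac1m$, the term is $(1+(1+\tfrac1m)^2)^{-1}>\tfrac14=(n+1)^{-2}$. After correction these blocks give $m\sum_{j\ge0}(1+j^2)^{-1}\approx 2.08\,m$. Your total is then still at most $6m\le 4m^2+m$, so the case $m\ge2$ survives.

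Second, the case $m=1$ is not proved. Both estimates you actually write down exceed $5$ there: $2+\pi^2/3\approx5.29$ and $3+\pi^2/3\approx6.29$. Your integral heuristic $m\pi+O(1)$ also undercounts, since the $m$ points in $[-1,0)$ alone contribute about $m$. The lemma is stated for every $m\in\NN$, so this is a missing step, not bookkeeping.

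The paper avoids case distinctions by reindexing rather than blocking. Bound the terms as follows:
\begin{itemize}
\item by $(1+\ell^2)^{-1}$ for $\ell\ge0$;
\item by $(1+(\ell+1)^2)^{-1}$ for $\ell\le-1$;
\item by $1$ for the $m-1$ points in $(-1,0)$.
\end{itemize}
The paper applies the shifted bound to all $\ell<0$, which is not justified on $(-1,0)$, but the resulting bound is the same. Substituting $j=\ell+1$ in the negative sum reassembles the full lattice sum, so
\[
\sum_{x\in\frac1m\ZZ}\sup_{\xi\in[x,x+t]}\frac{1}{1+\xi^2}\,\le\,\sum_{\ell\in\frac1m\ZZ}\frac{1}{1+\ell^2}+m\,=\,m^2\sum_{k\in\ZZ}\frac{1}{m^2+k^2}+m\,\le\,m^2\,\pi\coth\pi+m\,\le\,4m^2+m
\]
uniformly in $m\ge1$. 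For $m=1$ this reads $\pi\coth\pi+1\approx4.15<5$. Replacing your last two paragraphs by this computation completes the proof.
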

\begin{proof}
Write $\mu$ in explicit form as  
\[
\mu\, = \, \sum_{n \in m\ZZ} \sum_{k=0}^{m^{2}-1} c^{}_k \delta^{}_{\frac{k}{m}+n} \,,
\]
with complex weights $|c^{}_k|\leq 1$. We now estimate
\begin{align*}
\left| (\tau^{}_{t}\mu-\mu)(f) \right| &\, =\,  \left| t \, \mu \Bigl(\myfrac{\tau^{}_{-t}f -f}{t}\Bigr)\right| 
\, \leq \,  t \cdot \sum_{n \in m \ZZ} \ \sum_{k=0}^{ m^{2}-1} |c^{}_k| \left| \myfrac{\tau^{}_{-t}f\bigl(\frac{k}{m}+n\bigr) -f\bigl(\frac{k}{m}+n\bigr)}{t} \right| \\[3pt]
&\, \leq \,  t \cdot\sum_{n \in m \ZZ} \ \sum_{k=0}^{m^{2}-1}  \left| \myfrac{f\bigl(\frac{k}{m}+n+t\bigr) -f\bigl(\frac{k}{m}+n\bigr)}{t} \right|
\,=  \,  t \cdot\sum_{\ell \in \frac{1}{m} \ZZ}  \left| \myfrac{f(\ell+t) -f(\ell)}{t} \right| \, .
\end{align*}
By the mean value theorem, for each $\ell \in \frac{1}{m}\ZZ$, there exists some $\xi^{}_{\ell}$ such that
$\ell <  \xi^{}_{\ell} < \ell+t$ and 
\[
\myfrac{f(\ell+t) -f(\ell)}{t}  \, = \, f'(\xi^{}_{\ell})\,.
\]
This leads to the estimate
\[
\left| (\tau^{}_{t}\mu-\mu)(f) \right| \, \leq \,  t \cdot\sum_{\ell \in \frac{1}{m} \ZZ}  \left| f'(\xi^{}_{\ell}) \right| \, \leq \,   t \cdot \ts \bigl\| (1+x^2) f' \bigl\|^{}_\infty \,  \sum_{\ell \in \frac{1}{m} \ZZ} \myfrac{1}{1+|\xi^{}_{\ell}|^2} \, . 
\]
Since for all $\ell \geq 0$, we have $1+|\xi^{}_{\ell}|^2 \geq 1+\ell^2$, while for $\ell<0$, we have $1+|\xi^{}_{\ell}|^2 \geq 1+\left( \ell+1 \right)^2$, we can further estimate
\begin{align*}
 \sum_{\ell \in \frac{1}{m} \ZZ} \myfrac{1}{1+|\xi^{}_{\ell}|^2}& \,\leq\,   \sum_{\substack{\ell \in \frac{1}{m} \ZZ \\ \ell \geq 0} } \myfrac{1}{1+\ell^2} + \sum_{\substack{\ell \in \frac{1}{m} \ZZ \\
 \ell < 0} } \myfrac{1}{1+(\ell+1)^2}\, = \, \sum_{\ell \in \frac{1}{m} \ZZ } \myfrac{1}{1+\ell^2}+\sum_{\substack{j \in \frac{1}{m} \ZZ \\
 0 \leq j < 1} } \myfrac{1}{1+j^2} \\[5pt]
&\, \leq\,  m^2 \Bigl(\sum_{\ell\in \ZZ}  \myfrac{1}{ m^2+\ell^2} \Bigl)+m \,\leq \, m^2 \Bigl(\sum_{\ell\in \ZZ} \myfrac{1}{ 1+\ell^2} \Bigl)+m \ ,
\end{align*}
where
\[
\sum_{\ell\in \ZZ}  \frac{1}{ 1+\ell^2} \,=\,\pi\coth{\pi} \,=\, 3.153\ldots \,\leq\, 4 \,.
\]
This yields the claim of the lemma.
\end{proof}

\begin{proposition}[constructing crystalline measures]
\label{cor1} 
Let $(k^{}_n)^{}_{n\in \NN}$ be any increasing sequence of positive integers starting with $k^{}_1\geq 4$. Take the periodic Fourier eigenmeasures $(\mu^{}_m)^{}_{m\in \NN}$ to eigenvalue $\lambda$ from Lemma~\ref{lem:eigenmesures} and define  $\sigma^{}_{n}=\mu^{}_{k^{}_n}$ for each $n\in \NN$. Consider any sequences  $(a^{}_n)^{}_{n\in \NN}$ of real numbers and  $(t^{}_{n})^{}_{n\in \NN}$ in $(0,1)$ such that
\begin{equation}
\label{eq-tech-cond}
\sum_{n=1}^\infty |a^{}_n| \cdot t^{}_n\cdot  (4k^{2}_n+k^{}_n)  < \infty \ .
\end{equation}
Now define
\[
\sigma\, :=\,  \sum_{n=1}^\infty a^{}_{n}(\tau^{}_{t^{}_n}\sigma^{}_n-
\sigma^{}_n) \, \qquad \mbox{and} \qquad
\Sigma\, :=\,  \lambda \sum_{n=1}^\infty a^{}_{n} \bigl(\ee^{2 \pi \ii x \cdot t^{}_n } -1\bigr) \ts\sigma^{}_n \ .
\]
Then, both $\sigma$ and $\Sigma$ are tempered pure-point measures, with $\widehat{\sigma} = \Sigma$ as tempered distributions. Moreover, their support is locally finite and satisfies
\[
\supp(\sigma) \,\subset\, \QQ \ts \cup \big(\ts \bigcup_{n\in\NN}( -t^{}_n+\QQ)\big)\qquad \mbox{and} \qquad  \supp(\Sigma) \, \subset \, \QQ \,.
\]
\end{proposition}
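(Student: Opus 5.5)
The proof splits into three parts: the support and local-finiteness statements, temperedness of $\sigma$ and $\Sigma$, and the identity $\widehat\sigma=\Sigma$. Temperedness of $\sigma$ should come from Lemma~\ref{lem-tech}, and the identity from term-by-term Fourier transformation.

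\emph{Supports and local finiteness.} By Lemma~\ref{lem:eigenmesures}(iv), $\sigma_n$ vanishes on $(-\frac{k_n}{4}+1,\frac{k_n}{4}-1)$. The same holds for $\tau_{t_n}\sigma_n$ and for $(\ee^{2\pi\ii x t_n}-1)\sigma_n$, with the interval shrunk by at most $1$. For the gaps to exhaust $\RR$, I will use that the $k_n$ grow; in the intended application they are properly nested as in Remark~\ref{rem:choice_y}(i), and otherwise I will require $k_n\to\infty$, which an increasing integer sequence satisfies. Then every compact set meets the supports of only finitely many summands. Each summand has support in $\frac{1}{k_n}\ZZ$, resp.\ $-t_n+\frac1{k_n}\ZZ$, so each is locally finite. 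Hence $\sigma$ and $\Sigma$ are well-defined pure-point measures with locally finite support contained in the stated sets, and the series converge vaguely as finite sums on compacta.

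\emph{Temperedness.} For $\sigma$, fix $f\in\cS(\RR)$; then $\|(1+x^2)f'\|_\infty<\infty$. The weights of $\sigma_n$ lie in the unit disk by Lemma~\ref{lem:eigenmesures}(v), and $\sigma_n$ is $k_n\ZZ$-periodic with support in $\frac1{k_n}\ZZ$. Lemma~\ref{lem-tech} therefore gives
\[
\bigl|a_n(\tau_{t_n}\sigma_n-\sigma_n)(f)\bigr|\,\le\, |a_n|\,t_n\,(4k_n^2+k_n)\,\|(1+x^2)f'\|_\infty .
\]
By \eqref{eq-tech-cond}, the series $T(f):=\sum_n a_n(\tau_{t_n}\sigma_n-\sigma_n)(f)$ converges absolutely. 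The bound is a Schwartz seminorm of $f$, so $T$ is a tempered distribution. On $\Ccinf(\RR)$ only finitely many terms are nonzero, so $T$ coincides with the measure $\sigma$. Thus $\sigma$ is a tempered measure.

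\emph{Fourier identity.} Each summand is a tempered distribution with
\[
\widehat{\tau_{t_n}\sigma_n-\sigma_n}\,=\,(\ee^{\pm2\pi\ii x t_n}-1)\,\widehat{\sigma_n}\,=\,\lambda(\ee^{\pm 2\pi\ii xt_n}-1)\,\sigma_n ,
\]
with the sign fixed by the Fourier convention, which I take to match the stated $\Sigma$. The partial sums $\sigma^{(N)}$ converge to $\sigma$ in $\cS'$: by the estimate above, the tails $\sum_{n>N}$ are bounded by a tail of \eqref{eq-tech-cond} times a seminorm, hence converge pointwise on $\cS$. Since the Fourier transform is continuous on $\cS'$, we get $\widehat\sigma=\lim_N\widehat{\sigma^{(N)}}$. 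On $\Ccinf$ these partial sums stabilise to the measure $\Sigma$. So the tempered distribution $\widehat\sigma$ agrees with $\Sigma$ on $\Ccinf(\RR)$, which makes $\Sigma$ a tempered measure with $\widehat\sigma=\Sigma$. Local finiteness of the supports, together with pure-pointedness, then completes the crystalline-type structure.

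The main obstacle is the temperedness step: applying Lemma~\ref{lem-tech} uniformly in $n$ and checking that its hypotheses hold for every $\sigma_n$. The rest is bookkeeping.
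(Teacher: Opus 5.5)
Your proposal is correct and follows essentially the same route as the paper: local finiteness comes from the growing central gaps of the $\sigma_n$, temperedness of $\sigma$ from Lemma~\ref{lem-tech} combined with \eqref{eq-tech-cond}, and $\widehat\sigma=\Sigma$ from continuity of the Fourier transform on $\cS'(\RR)$ together with stabilisation of the partial sums on $\Ccinf(\RR)$. The only cosmetic difference is that you read off continuity of $T$ directly from the uniform bound $|T(f)|\le C\,\|(1+x^2)f'\|_\infty$, whereas the paper obtains $T$ as the limit of a Cauchy sequence in $\cS'(\RR)$.
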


\begin{remark}
\label{rem:stchoice} 
For example, with $ k^{}_n=2^{2n}$,  any sequences $(a^{}_n)^{}_{n\in \NN}$ and  $(t^{}_n)^{}_{n\in \NN}$ such that 
\[
\sum_{n=1}^\infty |a^{}_{n}| \cdot t^{}_{n} \cdot 2^{4n} \, < \, \infty 
\quad \text{ and } \quad 
  0\, <\,  t^{}_n \,<\, \myfrac{1}{2^{3n^2}} 
\]
satisfy the conditions of the proposition. In our construction below, we will need to select $(a^{}_n)^{}_{n\in \NN}$ of possibly fast growth. We can then pick $(t^{}_n)^{}_{n\in\NN}$ sufficiently small and satisfying extra restrictions, such as being irrational and being linearly independent over $\QQ$. \exend
\end{remark}
\begin{proof}[Proof of Proposition~\ref{cor1}] 
With the notation as above, consider the approximants
\[
\varrho_{N}\, := \, \sum_{n=1}^N a^{}_{n}(\tau^{}_{t^{}_n}\sigma^{}_n-
\sigma^{}_n) \,.
\]
We will prove the following statements, which then give the desired claim.
\begin{itemize}
\item[(a)] The sequence $(\varrho^{}_N)^{}_{N\in \NN}$ converges, in the vague topology, to the pure-point measure~$\sigma$, which has locally finite support.
\item[(b)] The sequence $(\widehat{\varrho^{}_N})^{}_{N\in \NN}$  converges, in the vague topology, to the pure-point measure~$\Sigma$, which has locally finite support.
\item[(c)] The sequence $(\varrho^{}_N)^{}_{N\in \NN}$ converges, in the topology of tempered distributions, to some $T \in \cS'(\RR)$.
\item[(d)] For all $\varphi \in \Ccinf(\RR)$ we have $\sigma(\varphi) \, = \, T(\varphi)$ and $\Sigma(\varphi) \, = \, \widehat{T}(\varphi)$. 
That is, $\sigma$ and $\Sigma$ are tempered measures satisfying $\cF(\sigma)= \Sigma$.  
Here, $\mathcal{F}$ denotes the Fourier transform in the tempered distribution sense.
\end{itemize}
\textbf{Ad (a):} Let $R>0$ be arbitrary. 
By construction, there exists some $N^{}_0\in\NN$ such that
\[
\tau^{}_{t^{}_n}\sigma_n-\sigma_n \, \equiv\,  0 \mbox{ on } [-R,R\ts] 
\]
for all $n \geq N^{}_0$. Let $f \in \Cc(\RR)$ supported within $[-R,R\ts ]$. We then have 
$\varrho^{}_N(f)=\varrho^{}_{N^{}_0}(f)$ for all $N\geq N^{}_0$.
This implies that the sequence $(\varrho^{}_N)^{}_{N\in \NN}$ is Cauchy in the vague topology and hence convergent to a measure, which must coincide with $\sigma$. Moreover, for each $R>0$, there exists some $N\in \NN$ such that 
\[
\sigma|^{}_{[-R,R\ts ]}\, =\,  (\varrho^{}_N)|^{}_{[-R,R\ts ]}
\]
is a pure-point measure with finite support, so claim (a) follows. Note here that if 
\[
\sigma \, = \, \sum_{n=1}^\infty a^{}_{n}(\tau^{}_{t^{}_n}\sigma^{}_n-
\sigma^{}_n)
\]
is evaluated on $f \in \Cc(\RR)$, at each point, only finitely many summands are non-zero.

\noindent \textbf{Ad (b):} For each $N\in \NN$, we have 
\[
\reallywidehat{\varrho^{}_N}\, = \,\sum_{n=1}^N a^{}_{n}\, \reallywidehat{(\tau^{}_{t^{}_n}\sigma^{}_n -
\sigma^{}_n)} \, = \, \sum_{n=1}^N a^{}_{n} \bigl(\ts \ee^{2 \pi \ii x \cdot t^{}_n } -1\bigr) \,\widehat{\sigma^{}_n}, = \, \lambda \sum_{n=1}^N a^{}_{n} \bigl(\ts \ee^{2 \pi \ii x\cdot t^{}_n } -1\bigr) \ts \sigma^{}_n \,.
\]
Exactly as in (a), since $\sigma^{}_n$ is pure point with uniformly discrete support and has a gap around~$0$, which is growing to infinity with $n$, we get that $\reallywidehat{\varrho^{}_N}$ converges in the vague topology to the pure-point measure 
\[
\Sigma \, = \, \lambda \sum_{n=1}^\infty a^{}_{n} \bigl( \ts \ee^{2 \pi \ii x\cdot  t^{}_n} -1\bigr) \ts \sigma^{}_n \ ,
\]
which has locally finite support. 

\noindent \textbf{Ad (c):} Note that, by Lemma~\ref{lem-tech}, we have for all $f \in \cS(\RR)$ and all $n\in \NN$ the estimate
\[
\left| \bigl(a^{}_{n}(\tau^{}_{t^{}_n}\sigma^{}_n-
\sigma^{}_n) \bigr) (f) \right| \,\leq\, |a^{}_n| \cdot t^{}_n \cdot (4k^{2}_n+k^{}_n) \cdot \bigl\| (1+x^2) f' \bigr\|^{}_\infty  \ .
\]
Equation \eqref{eq-tech-cond} then implies that $\bigl(\varrho^{}_{N}(f)\bigr)^{}_{N\in \NN}$ is a Cauchy sequence for arbitrary function $f \in \cS(\RR)$. Therefore, the sequence $(\varrho^{}_N)^{}_{N\in \NN}$ is Cauchy in the tempered distribution topology and hence converges to some $T \in \cS'(\RR)$. This proves~(c).

\noindent \textbf{Ad (d):} In part (a), we showed that, for all $\varphi \in \Ccinf(\RR)$, one has $\sigma(\varphi)=\lim_{N \to \infty} \varrho^{}_N(\varphi)$. In part (c), we also showed that $T(\varphi)=\lim_{N \to \infty} \varrho^{}_N(\varphi)$. 
Therefore, we have $\sigma(\varphi)= T(\varphi) $ for all $\varphi\in \Ccinf(\RR^d)$.
Next, since the sequence $(\varrho^{}_N)^{}_{N\in\NN}$ converges to $T$ in the topology of tempered distributions, we also have that the sequence $(\widehat{\varrho^{}_N})^{}_{N\in \NN}$ converges to $\widehat T$. 
Thus, for all $\varphi \in \Ccinf(\RR)$ we also have 
\[
\Sigma(\varphi)\, =\, \lim_{N\to \infty} \widehat{\varrho^{}_N}(\varphi)\, =\,  \widehat{T}(\varphi) \,.
\]
This proves (d) and finishes the proof.
\end{proof}

The above construction can now be used to yield tempered measures $\sigma$ that are not strongly tempered. 

\begin{lemma}[crystalline measures that are not strongly tempered]
\label{Lem3.16} 
Take sequences $(k^{}_n)^{}_{n\in \NN}$ and $(y_n^{})^{}_{n\in \NN}$ satisfying the assumptions in Remark~\ref{rem:choice_y}. Take sequences $(a^{}_n)^{}_{n\in \NN}$ and $ (t^{}_n)^{}_{n\in \NN}$ satisfy the assumptions in Proposition~\ref{cor1}. 
We then have
\[
\sigma\bigl(\{y^{}_n-t^{}_n \}\bigr)  \, \geq \,  a^{}_n- \sum_{k=1}^{n-1} \left| a^{}_k \right|  \,.
\]
\noindent In particular, if $a^{}_n$ satisfies for all $n\in \NN$ the estimate
\[
|a^{}_n| \,  \geq \,   \sum_{k=1}^{n-1} \left| a^{}_k \right| + 2^{k^{}_n}\, ,
\]
then $\sigma$ is not strongly tempered. 
\end{lemma}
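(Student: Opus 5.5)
The plan is to compute the atom of $\sigma$ at $p:=y^{}_n-t^{}_n$ directly from the series defining $\sigma$ in Proposition~\ref{cor1}. By part (a) of that proof, only finitely many summands contribute at any given point. I split the contributions into the three ranges $k=n$, $k>n$ and $k<n$. I use the translation convention of the paper, under which $\tau^{}_{t}\sigma^{}_k(\{p\})=\sigma^{}_k(\{p+t\})$, so that $\tau^{}_{t^{}_n}\sigma^{}_n$ carries the atom $\sigma^{}_n(\{y^{}_n\})=1$ at $p$. As in Remark~\ref{rem:stchoice}, I take $t^{}_n$ irrational, $\QQ$-linearly independent and very small (in particular $t^{}_n<1/k^{}_n$), and the $a^{}_n$ real and positive where the sign matters.

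\emph{Term $k=n$.} Its contribution is $a^{}_n\bigl(\sigma^{}_n(\{y^{}_n\})-\sigma^{}_n(\{p\})\bigr)$. Since $\supp(\sigma^{}_n)\subset\QQ$ and $p\notin\QQ$, this equals $a^{}_n$.

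\emph{Terms $k>n$.} Here both $\sigma^{}_k(\{p\})$ and $\tau^{}_{t^{}_k}\sigma^{}_k(\{p\})=\sigma^{}_k(\{y^{}_n+(t^{}_k-t^{}_n)\})$ are atoms of $\sigma^{}_k$ at points $y^{}_n+y$ with $y\in[-1,1)$. By the choice of $y^{}_n$ in Remark~\ref{rem:choice_y}~(ii), all of these vanish.

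\emph{Terms $k<n$.} The atom $\sigma^{}_k(\{p\})$ vanishes because $p\notin\QQ$. The remaining atom $\sigma^{}_k(\{y^{}_n-t^{}_n+t^{}_k\})$ has modulus at most $1$ by Lemma~\ref{lem:eigenmesures}~(v); it is actually zero by $\QQ$-independence, but the bound suffices. Hence each such term contributes at most $|a^{}_k|$ in modulus.

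Summing the three ranges gives $\sigma(\{y^{}_n-t^{}_n\})\geq a^{}_n-\sum_{k<n}|a^{}_k|$. The delicate point is making sure that no term $k\neq n$ produces two atoms at $p$, since that would give a bound of $2|a^{}_k|$. This is where the irrationality and independence of the $t^{}_k$ are used, and I expect it to be the only step needing care.

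For the second claim, the growth assumption gives $|\sigma|(\{y^{}_n-t^{}_n\})\geq 2^{k^{}_n}$. Since $\sigma^{}_n$ is $k^{}_n\ZZ$-periodic, I may choose $y^{}_n$ in the central fundamental domain, so that $|y^{}_n-t^{}_n|\leq k^{}_n/2+1$. Then $|\sigma|\bigl(B^{}_{k^{}_n/2+1}(0)\bigr)\geq 2^{k^{}_n}$, so $|\sigma|(B^{}_r(0))$ grows exponentially along $r^{}_n=k^{}_n/2+1\to\infty$. This contradicts condition (iii) of Lemma~\ref{lem:st}, so $\sigma$ is not strongly tempered.
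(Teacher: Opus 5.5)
Your proof is correct and follows the paper's argument. The terms with $k>n$ vanish by the choice of $y^{}_n$, the term $k=n$ contributes exactly $a^{}_n$, and the terms $k<n$ are bounded by $|a^{}_k|$ via the triangle inequality; non-strong temperedness then follows because $|\sigma|$ carries mass at least $2^{k^{}_n}$ in a ball of radius of order $k^{}_n$.

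You are slightly more careful than the paper, which tacitly drops the atoms $\sigma^{}_k(\{y^{}_n-t^{}_n\})$ and tacitly uses that $|y^{}_n|$ is at most of order $k^{}_n$. Two points you make explicit are exactly what justify these steps. First, irrationality of $t^{}_n$, which is not among the hypotheses of Proposition~\ref{cor1} but is available via Remark~\ref{rem:stchoice}, removes those atoms. Second, placing $y^{}_n$ in the central fundamental domain gives the needed bound on $|y^{}_n|$.
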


\begin{proof}
By assumption, we have $\sigma^{}_m(\{y_n+y\})=0$ for all $m>n$ and all $y\in [-1,1)$.
Therefore, 
\begin{align*}
\left| \sigma\bigl(\{y^{}_n-t^{}_n\}\bigr) \right| &\, = \, \Bigl| \sum_{k=1}^n a^{}_k \tau^{}_{t^{}_k} \sigma^{}_k \bigl(\{y^{}_{n}-t^{}_n\} \bigr) \Bigr| \\
& \, \geq \,  \left|a^{}_n \tau^{}_{t^{}_n} \sigma^{}_n \bigl(\{y^{}_n-t^{}_n\}\bigr) \right| - \Bigl| \sum_{k=1}^{n-1} a^{}_k \tau^{}_{t^{}_k} \sigma^{}_k \bigl(\{y^{}_n-t^{}_n\}\bigr) \Bigr| \, \geq \, |a^{}_n|- \sum_{k=1}^{n-1} \left| a^{}_k \right| \ ,
\end{align*}
which proves the first claim. Next, let us note that, under the extra assumption, we obtain
\[
\pushQED{\qed}
\left| \sigma\bigl(\{y^{}_n-t^{}_n\}\bigr) \right| \, \geq \, |a^{}_n|- \sum_{k=1}^{n-1} \left| a^{}_k \right| \, \geq \, 2^{k^{}_n}\ .
\]
As $|\sigma|\bigl(B^{}_{k^{}_n}(0)\bigr)\geq 2^{k^{}_n}$ for all $n\in \NN$, it follows that $\sigma$ is not strongly tempered.
\end{proof}

\begin{remark} Under the conditions of Lemma~\ref{Lem3.16}, if we further assume that all $t^{}_n$ are linearly independent over $\QQ$, we get for all $m\in \NN$ that
\[
\tau^{}_{t^{}_m}\sigma^{}_m \bigl(\{y^{}_n-t^{}_n \}\bigr) \, = \, 0 \,.
\]
Therefore, we get the sharper inequality $\bigl|\sigma\bigl(\{y^{}_n-t^{}_n \}\bigr)\bigr| \, \geq \,  |a^{}_n|$.
\exend
\end{remark}

We will need the following refined result about Fourier--Bohr coefficients later on. Note that we do not assume $T \in \mathsf{SAP}^{}_{\cS}(\RR^d)$, not even $T\in \cS'_{\infty}(\RR^d)$. We only assume that $T*f$ is Bohr almost periodic for \emph{some} fixed Schwartz function $f$.

\begin{lemma}
\label{lem-bohr} 
Let $T \in \cS'(\RR^d)$ be such that $\widehat{T}$ agrees with a tempered measure $\mu$. Assume that $T*f \in SAP(\RR^d)$ for some $f \in \cS(\RR^d)$.
Then, $\widehat f\cdot \mu$ is a pure-point measure, and the Fourier--Bohr coefficients $\mathbf{a}^{}_{k}(T*f)$ satisfy
\[
\mathbf{a}^{}_{k}(T*f)\, =\, \widehat{f}(k) \mu\bigl(\{k \}\bigr) 
\]
for all $k\in\RR^d$. In particular, we have
\[
\sum_{k \in \RR^d} \Bigl|\ts \widehat{f}(k) \mu\bigl(\{k \}\bigr)\Bigr|^2 \, < \, \infty \,.
\]
\end{lemma}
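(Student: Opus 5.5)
The plan is to identify the tempered distribution $T*f$ with the inverse Fourier transform of $\widehat f\cdot\mu$, and then to compute Fourier--Bohr coefficients by testing against a nice kernel. Since $\widehat T=\mu$ is a tempered measure and $\widehat f\in\cS(\RR^d)$, the product $\widehat f\cdot \mu$ is a well-defined tempered distribution. The convolution theorem for $T\in\cS'$ and $f\in\cS$ gives
\[
\widehat{T*f}\,=\,\widehat f\cdot \widehat T .
\]
Testing against $\psi\in\Ccinf(\RR^d)$ shows that $\widehat{T*f}$ agrees on $\Ccinf$ with the Radon measure $\widehat f\cdot\mu$. Hence $\widehat{T*f}$, the Fourier transform of the Bohr almost periodic function $g:=T*f$, is a measure in the sense used in Lemma~\ref{CharSAP}, and it equals $\widehat f\cdot\mu$. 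One point needs care here: we do not assume that $\widehat f\cdot\mu$ is strongly tempered, only that it is a Radon measure coinciding with a tempered distribution on $\Ccinf$. This matches the setting of Lemma~\ref{CharSAP}(i), so I will check that its hypothesis is exactly ``is a measure'' in this tempered sense.

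Next, I apply Lemma~\ref{CharSAP} to $g\in SAP(\RR^d)$. Statement (i) holds, so the identity
\[
\widehat g\,=\,\sum_{k\in S}\mathbf{a}^{}_k(g)\,\delta^{}_k
\]
holds in the measure sense. In particular, $\widehat f\cdot\mu$ is pure point. Comparing point masses at each $k\in\RR^d$ then gives
\[
\mathbf{a}^{}_k(g)\,=\,(\widehat f\cdot\mu)(\{k\})\,=\,\widehat f(k)\,\mu(\{k\}),
\]
and both sides vanish for $k\notin S$.

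Finally, I invoke Parseval's identity for Bohr almost periodic functions, $\sum_k|\mathbf{a}^{}_k(g)|^2=M(|g|^2)\le\|g\|_\infty^2<\infty$, which yields the claimed square summability.

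The main obstacle is the first step. I have to justify that the distributional Fourier transform of $g$ really equals the measure $\widehat f\cdot\mu$, that is, $\widehat{T*f}=\widehat f\,\widehat T$ in $\cS'$, and that this is compatible with $\mu$ being only a tempered measure and not a strongly tempered one. If Lemma~\ref{CharSAP} were to require a stronger notion, I would instead prove the coefficient identity directly. For that I would take $h\in\Ccinf(\RR^d)$ with $h(0)=1$, set $h_R(\xi)=h(R\xi)$, and compute the mean via $M(\overline{\chi_k}g)=\lim_{R\to\infty}\langle \overline{\chi_k}g,\, R^{-d}\widecheck{h}(\cdot/R)\rangle$ (using normalized kernels). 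This equals $\lim_R \bigl(\widehat f\mu\bigr)\bigl(\tau_k h_R\bigr)=\widehat f(k)\mu(\{k\})$ by dominated convergence on the locally finite mass near $k$, with $h$ chosen positive-definite so that the kernel is a legitimate averaging kernel for Bohr means. Pure-pointedness would then follow from this identity together with the countability of the spectrum of $g$.
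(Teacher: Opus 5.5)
Your main argument is correct, but it is organised differently from the paper's proof.

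\textbf{The paper's route.} The paper never applies Lemma~\ref{CharSAP} to $T*f$ itself. It fixes a cutoff $\varphi\in\Ccinf(\RR^d)$ and notes that $\nu=\widehat f\cdot\varphi\cdot\mu$ is a \emph{finite} measure with $\widecheck{\nu}=T*f*\widecheck{\varphi}$. This function is Bohr almost periodic by Remark~\ref{rem:SAPstable}. The paper concludes that each such $\nu$ is pure point with $\varphi(k)\,\mathbf{a}^{}_k(T*f)=\nu(\{k\})$. Only afterwards does it recover pure-pointedness of $\widehat f\cdot\mu$ from all these localisations.

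\textbf{Your route.} You apply Lemma~\ref{CharSAP} globally to $g=T*f$. The point you flagged is harmless. For $\psi\in\Ccinf(\RR^d)$ one has $\widehat f\psi\in\Ccinf(\RR^d)$, so $\widehat g(\psi)=\widehat T(\widehat f\psi)=\mu(\widehat f\psi)$. Thus $\widehat g$ agrees on $\Ccinf(\RR^d)$ with the Radon measure $\widehat f\cdot\mu$, which is exactly the paper's meaning of ``is a measure'' in Lemma~\ref{CharSAP}(i). No strong temperedness of $\widehat f\cdot\mu$ is needed.

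\textbf{Comparison.} Your argument is shorter. The paper's localisation buys that Lemma~\ref{CharSAP} is only ever invoked for Fourier--Stieltjes transforms of finite measures, the classical case, so no question about the global nature of $\widehat f\cdot\mu$ arises. The final square-summability step (Parseval in your version, Bessel in the paper's) is the same in both.

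\textbf{Your fallback.} The fallback you sketch is incomplete as written. Matching point masses of $\widehat f\cdot\mu$ with the Fourier--Bohr coefficients of $g$ does not by itself exclude a continuous part of $\widehat f\cdot\mu$. Countability of the spectrum of $g$ does not help with that either. To rule out a continuous part you would need precisely the paper's localisation: a finite measure whose inverse Fourier transform is Bohr almost periodic is pure point.
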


\begin{proof}
Consider any $\varphi \in \Ccinf(\RR^d)$ and note that $\nu:=\, \widehat{f} \cdot \varphi \cdot \mu$
is a finite measure on $\RR^d$. Since we have $\nu =  \widehat{f} \cdot \varphi \cdot \widehat{T}$ as a tempered distribution, we obtain for its inverse Fourier transform 
\[
\widecheck{\nu} \,= \, T *f *\widecheck{\varphi} \,.
\]
Since $T*f \in SAP(\RR^d)$ and $\widecheck{\varphi} \in \cS(\RR^d) \subset L^1(\RR^d)$, we have $T *f *\widecheck{\varphi} \in SAP(\RR^d)$ by Remark~\ref{rem:SAPstable}. Thus $\nu$ is a finite pure point measure by Lemma~\ref{CharSAP}, and we have  
\[
\varphi(k) \cdot \mathbf{a}^{}_{k}(T *f  )\, =\,\mathbf{a}^{}_{k}(T *f *\widecheck{\varphi} )\, =\, \mathbf{a}^{}_{k}(\widecheck{\nu} ) \, =\,  \nu(\{k\}) =\varphi(k) \cdot  \widehat{f}(k) \cdot \mu(\{k\}) \,.
\]
Since $\widehat{f} \cdot \varphi \cdot \mu$ is a finite pure point measure for all $\varphi \in \Ccinf(\RR^d)$, we get that $\widehat{f} \cdot \mu$ is a pure-point measure, and the formula for the Fourier--Bohr coefficients follows.
Square summability follows from Bessel's inequality applied to the Bohr almost periodic function $T*f$. 
\end{proof}

\begin{cor}
    Under the conditions of Lemma~Lemma~\ref{lem-bohr}, if the function $f$ satisfies that the set $\{\xi \in \RR^d \, : \, \widehat{f}(\xi) = 0 \}$ is at most countable, then the measure $\mu$ is pure point.
\end{cor}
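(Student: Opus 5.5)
The plan is to combine the conclusion of Lemma~\ref{lem-bohr} with the countability assumption on the zero set $Z:=\{\xi\in\RR^d : \widehat f(\xi)=0\}$. By Lemma~\ref{lem-bohr}, the measure $\nu:=\widehat f\cdot\mu$ is pure point. Since $\widehat f$ is continuous and never zero on the open set $U:=\RR^d\setminus Z$, we will first show that $\mu$ restricted to $U$ is pure point. Then we will handle the part of $\mu$ living on $Z$ using countability.

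For the first step, we decompose $\mu=\mu^{}_{\mathsf{pp}}+\mu^{}_{\mathsf{c}}$ into its pure-point and continuous parts, which is possible since $\mu$ is a Radon measure. Multiplication by the continuous function $\widehat f$ preserves both types. Indeed, $\widehat f\cdot\mu^{}_{\mathsf{c}}$ has no atoms, and $\widehat f\cdot\mu^{}_{\mathsf{pp}}$ is again pure point. This local statement needs only local finiteness, and it holds on compact sets without any integrability issues. Uniqueness of the decomposition of $\nu$ then forces $\widehat f\cdot\mu^{}_{\mathsf{c}}=0$, so $|\widehat f|\cdot|\mu^{}_{\mathsf{c}}|=0$. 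Hence $|\mu^{}_{\mathsf{c}}|(U)=0$. Concretely, on each compact $K\subset U$ we have $|\widehat f|\geq c^{}_K>0$, which gives $|\mu^{}_{\mathsf{c}}|(K)\leq c_K^{-1}\,|\widehat f\cdot\mu^{}_{\mathsf{c}}|(K)=0$, and inner regularity then yields $|\mu^{}_{\mathsf{c}}|(U)=0$.

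For the second step, the continuous part $\mu^{}_{\mathsf{c}}$ is therefore concentrated on $Z$. As $Z$ is at most countable and $\mu^{}_{\mathsf{c}}$ has no atoms, we get $|\mu^{}_{\mathsf{c}}|(Z)=\sum_{z\in Z}|\mu^{}_{\mathsf{c}}|(\{z\})=0$. Thus $\mu^{}_{\mathsf{c}}=0$ and $\mu$ is pure point.

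The only subtle point is that Lemma~\ref{lem-bohr} gives pure-pointedness of $\widehat f\cdot\mu$ as a (possibly infinite) Radon measure. The decomposition argument should therefore be carried out locally, with the test cutoffs $\varphi\in\Ccinf(\RR^d)$ already used in that proof, so that $\widehat f\cdot\varphi\cdot\mu$ is a finite pure-point measure. I do not expect any real obstacle here; the argument is essentially measure-theoretic bookkeeping.
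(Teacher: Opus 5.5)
Your proposal is correct and follows the same route as the paper. You split $\mu$ into pure-point and continuous parts, use that $\widehat f\cdot\mu^{}_{\mathsf{c}}$ is the (vanishing) continuous part of the pure-point measure $\widehat f\cdot\mu$ to confine $\mu^{}_{\mathsf{c}}$ to the countable zero set of $\widehat f$, and conclude $\mu^{}_{\mathsf{c}}=0$ because it has no atoms; the extra care with compact exhaustion and local cutoffs is harmless but unnecessary, since the pure-point/continuous decomposition is available directly for Radon measures.
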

\begin{proof}
    Since $\widehat{f} \mu$ is pure point, the continuous part $(\widehat{f} \mu)^{}_{\mathsf{cont}}$ of $\widehat{f} \mu$ vanishes. Since we have $(\widehat{f} \mu)^{}_{\mathsf{cont}} = \widehat{f} \mu^{}_{\mathsf{cont}}$, it follows that $\mu^{}_{\mathsf{cont}}$ is supported inside $\{\xi \in \RR^d \, : \, \widehat{f}(\xi) = 0 \}$. As this set is countable, we obtain that $\mu$ is pure point. 
\end{proof}

\begin{remark}
    Note that if $T\ast f \in SAP(\RR^d)$ for a particular test function $f\in \cS(\RR^d)$, one cannot, in general, conclude anything about the strong almost periodicity of $T$. To see that,  pick $T$ to be a twice Fourier transformable measure such that $\widehat{T}$ is \emph{not} pure point, but its restriction to $B^{}_R(0)$ is pure point. Then $T*f$ is Bohr almost periodic function for all $f \in \cS(\RR^d)$ with $\supp(\widehat{f}) \subset B^{}_R(0)$. 
    
    \noindent More generally, despite the fact that $\cS \ast \cS = \cS$, we always have $f\ast \cS \subsetneq \cS$ \cite{PV78,Voi84}, and in the best-case scenario, one obtains a dense subspace. This happens if and only if $\{\xi \in \RR^d \, : \, \widehat{f}(\xi) = 0 \} = \varnothing$, as follows by the fact that Fourier transform is a~bijection on $\cS$ together with standard arguments using the geometric version of Hahn--Banach theorem \cite[Thm.~4.7]{Rudin}. Therefore, if one has a translation-bounded tempered distribution and test it with any such function (say a Gaussian), one can then conclude its almost periodicity. \exend
\end{remark}

Based on the construction described in Remark~\ref{rem:choice_y},
we will now give examples of crystalline measures $\sigma$ on $\RR$ that are not slowly increasing (hence not strongly tempered) and, foremost, if convolved with a~$\Ccinf(\RR)$-test function (which we also specify), one obtains a~function which is not Bohr almost periodic.

\begin{theorem}\label{tnm:ex1}
Let $\varphi \in \Ccinf(\RR)$ be an arbitrary function that is not identically vanishing. Take eigenmeasures $(\sigma^{}_n)^{}_{n\in \NN}$ and rational numbers $(y^{}_n)^{}_{n\in \NN}$ as in Remark~\ref{rem:choice_y}.
Then, we can choose a real number $y$, a decreasing sequence $(t^{}_{n})^{}_{n\in \NN}$ of positive numbers converging to zero and an increasing sequence $(a^{}_{n})^{}_{n\in \NN}$ of positive numbers diverging to $\infty$,
such that 
\[
\sigma \ := \, \sum_{k=1}^\infty a^{}_k(\tau^{}_{t^{}_k}\sigma^{}_k - \sigma_{k}) \qquad \mbox{and} \qquad 
\Sigma\, :=\,  \lambda \sum_{k=1}^\infty a^{}_k \bigl(\ee^{2 \pi \ii x \ts \cdot t^{}_k}-1\bigr) \ts \sigma^{}_k 
\]
are crystalline measures with $\widehat{\sigma}\, =\, \Sigma$, which satisfy
\[
\sum_{n=1}^\infty \bigl| \,  \sigma\bigl(\{y^{}_{n}-t^{}_n\}\bigr) \ts\widehat{\varphi}(y^{}_n-t^{}_{n}-y)\bigl|^2\, =\, \infty \,.
\]
Moreover, they satisfy
\[
\Sigma\bigl(\{ y^{}_{n}-t^{}_n \}\bigr)\,  =\,  \sigma\bigl(\{ -y^{}_{n}+t^{}_n \}\bigr) \, =\, \Sigma\bigl(\{ -y^{}_{n}+t^{}_n \}\bigr) \, =\, 0  
\]
for all $n\in \NN$. As a consequence, 
\[
\eta\, :=\, \sigma + \cF(\sigma)+\cF^2(\sigma) + \cF^3(\sigma)
\]
is a crystalline Fourier eigenmeasure satisfying $\widehat\eta=\lambda\eta$, and with $\psi(x)=\ee^{2 \pi \ii x\cdot y} \varphi(x)$ we have
\[
\eta\ast \psi \, \notin \, SAP(\RR) \,.
\]
Thus, $\eta$ fails to be strongly almost periodic or translation bounded as a distribution, and $\eta$~fails to be a strongly tempered measure.
\end{theorem}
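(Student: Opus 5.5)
The proof has two parts: an inductive choice of parameters, and then an application of Lemma~\ref{lem-bohr} to $\eta$ with test function $f=\psi$.

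\textbf{Choice of parameters.} Since $\varphi\not\equiv 0$, its Fourier transform $\widehat\varphi$ is the restriction of a nonzero entire function. Hence its zero set is discrete, and we may fix a real $y$ such that $\widehat\varphi(y_n-y)\neq 0$ for all $n$ (countably many conditions). Next we choose $t_n$ and $a_n$ inductively, with $k_n=4^n$ and the $t_n$ irrational and $\QQ$-linearly independent as in Remark~\ref{rem:stchoice}.

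At step $n$, first take $t_n$ so small that:
\begin{itemize}
\item[(i)] $|\widehat\varphi(y_n-t_n-y)|\geq \tfrac12|\widehat\varphi(y_n-y)|=:c_n>0$;
\item[(ii)] $t_n<t_{n-1}$.
\end{itemize}
Then set
\[
a_n \,:=\, \sum_{k<n}a_k + 2^{k_n} + \frac{n}{c_n}.
\]
Finally, shrink $t_n$ further so that $a_n t_n(4k_n^2+k_n)<2^{-n}$. This keeps (i) valid, since it only asks for $t_n$ small, and it makes \eqref{eq-tech-cond} hold. The ordering matters: $a_n$ depends on $c_n$, which depends only on $y_n-y$, not on $t_n$. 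So we may fix $c_n$ first, then $a_n$, then $t_n$, and this avoids any circularity.

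Proposition~\ref{cor1} now gives that $\sigma$ and $\Sigma$ are crystalline with $\widehat\sigma=\Sigma$. Lemma~\ref{Lem3.16} together with the subsequent remark gives $|\sigma(\{y_n-t_n\})|\geq a_n$. Hence
\[
|\sigma(\{y_n-t_n\})\,\widehat\varphi(y_n-t_n-y)|\geq a_n c_n\geq n,
\]
and the stated sum of squares diverges.

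\textbf{Vanishing statements.} We have $\supp\Sigma\subset\QQ$, while $y_n-t_n\notin\QQ$ because $t_n$ is irrational and $y_n$ is rational. So $\Sigma(\{\pm(y_n-t_n)\})=0$. For $\sigma$ we use that $\supp\sigma\subset \QQ\cup\bigcup_m(-t_m+\QQ)$. The point $-y_n+t_n$ lies in $-t_m+\QQ$ only if $t_n+t_m\in\QQ$, which $\QQ$-linear independence excludes, including the case $m=n$ since $2t_n\notin\QQ$. Hence $\sigma(\{-y_n+t_n\})=0$.

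\textbf{The eigenmeasure $\eta$.} We have $\cF^2\sigma=\sigma^\dagger$ and $\cF^3\sigma=\Sigma^\dagger$. Therefore $\eta=\sigma+\Sigma+\sigma^\dagger+\Sigma^\dagger$ is a sum of crystalline measures, hence crystalline. The property $\cF^4=\mathrm{id}$ gives $\widehat\eta=\sigma+\eta-\sigma$ up to relabelling, i.e.\ the four-term sum is $\cF$-invariant. (With the eigenvalue $\lambda$ one should instead use the weighted sum $\sum_j\lambda^{-j}\cF^j\sigma$; I expect that combination is the one actually meant, and the argument is identical.)

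At the points $x_n=y_n-t_n$ the three other terms vanish by the previous step, so $\eta(\{x_n\})=\sigma(\{x_n\})$.

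\textbf{Conclusion via Lemma~\ref{lem-bohr}.} Suppose $\eta*\psi\in SAP(\RR)$. Apply Lemma~\ref{lem-bohr} with $T=\widecheck{\eta}$, which is a multiple of $\eta$, so that $\widehat T$ is the tempered measure $\eta$ up to a constant. Then
\[
\sum_k|\widehat\psi(k)\,\eta(\{k\})|^2<\infty.
\]
Here $\widehat\psi(k)=\widehat\varphi(k-y)$, so restricting the sum to $k=x_n$ contradicts the divergence established above. Hence $\eta*\psi\notin SAP(\RR)$.

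By Theorem~\ref{thm:sappois}(a), $\eta$ is then not translation bounded as a distribution. By Lemma~\ref{lem:temptb} applied to $\widehat\eta=\lambda\eta$, it is not strongly tempered either: if it were, $\eta=\lambda^{-1}\widehat\eta$ would be a translation-bounded tempered distribution, hence translation bounded as a distribution.

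The main obstacle is keeping the sign and eigenvalue conventions consistent in the $\eta$ step, so that the correct combination is an eigenmeasure. The other delicate point is the ordering of the inductive choices described above.
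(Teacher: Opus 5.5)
Your proposal is correct and follows the paper's proof essentially step for step. It uses the same choice of $y$ off a countable bad set, the same order of choices ($c_n$, then $a_n$, then $t_n$ for Proposition~\ref{cor1}), the same point-mass computation $\sigma(\{y_n-t_n\})=a_n$ with the other three summands of $\eta$ vanishing there, and the same contradiction with the square-summability in Lemma~\ref{lem-bohr}. Your remark on the eigenvalue is right: the unweighted sum $\sigma+\cF\sigma+\cF^2\sigma+\cF^3\sigma$ satisfies $\widehat\eta=\eta$, so for $\lambda\neq 1$ one needs $\sum_{j=0}^{3}\lambda^{-j}\cF^j\sigma$. As in the paper, ``$\QQ$-linear independence'' must be read as independence of $1,t_1,\dots,t_n$ over $\QQ$, since that is what actually excludes $t_n\pm t_m\in\QQ$.
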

\begin{remark}
The following argument works for any $\varphi \in \cS(\RR)$ that is not identically vanishing, such that its Fourier transform $\widehat \varphi\in \cS(\RR)$ is the restriction of an entire function.
In particular, the argument does not apply to $\varphi \in \reallywidehat{\Ccinf(\RR)}$, since its Fourier transform vanishes on some non-empty open set.   \exend
\end{remark}

\begin{proof}
Since $\varphi \in \Ccinf(\RR)$, its Fourier transform $\widehat{\varphi}$ is the restriction of an entire function, as can be seen from standard estimates. In particular, its zero set 
\[
A\, :=\,  \{ x \in \RR \,:\, \widehat{\varphi}(x) =0 \} 
\]
is discrete without accumulation points and, therefore, locally finite and countable. The set $B:= \{y^{}_n \ts :\ts  n\in \NN \}$ is also countable, and hence so is $B-A$. Therefore, $\RR \backslash (B-A)$ is non-empty. Fix any $y \in \RR \backslash (B-A)$. We then have $\widehat{\varphi}(y^{}_n-y) \neq 0$ for all $n\in \NN$.
Since $\widehat{\varphi}$ is continuous, we find sequences $(c^{}_{n})^{}_{n\in\NN}$ and $(\delta^{}_{n})^{}_{n\in\NN}$ of positive numbers in (0,1) such that
\begin{equation}
\label{eq7}
|x-y^{}_n|\, <\, \delta^{}_n 
\, \Longrightarrow \, \left| \widehat{\varphi}(x-y) \right| \,>\, c^{}_n 
\end{equation}
for all $n\in\NN$.
Now, take any sequence $(a^{}_n)_{n\in \NN}$ of positive numbers such that
\begin{equation}
\label{eq:zstruc}
\sum_{n=1}^\infty \left(a^{}_n \cdot c^{}_n\right)^2 \,=\, \infty \, . 
\end{equation}

Next, take a decreasing sequence $(t^{}_n)^{}_{n\in \NN}$ of irrational numbers in $(0,1)$ such that $t^{}_n<\delta^{}_n$ for every $n$, such that $t^{}_1, \ldots, t^{}_n$ are linearly independent over $\QQ$ for every $n$, and such that
\begin{equation}
\label{eq4}
\sum_{n\in\NN} 2^{4n} a^{}_{n}t^{}_{n} \,< \, \infty \,.
\end{equation}

Define
\[
\sigma \,:=\,  \sum_{k=1}^\infty a^{}_k(\tau^{}_{t^{}_k}\sigma^{}_k - \sigma_{k}) \qquad \mbox{and} \qquad 
\Sigma\, :=\, \lambda \sum_{k=1}^\infty a^{}_k(\ee^{2 \pi \ii x \cdot t^{}_k}-1) \sigma^{}_k 
\]
Then $\sigma$ and $\Sigma$ are tempered measures with locally finite support by Remark~\ref{rem:stchoice}. Moreover, as tempered distributions, we have $
\widehat{\sigma}=\Sigma$. 
Thus, both $\sigma$ and $\Sigma$ are crystalline measures. Finally, recall from Proposition~\ref{cor1}  that
\begin{equation}
\label{eq5}
\supp(\sigma)\, \subset\, \QQ\cup \big(\bigcup_{n} \, (-t^{}_{n}+\QQ) \big) \qquad \mbox{and} \qquad
\supp(\Sigma)\, \subset \, \QQ \,. 
\end{equation}
Since $\sigma$ and $\Sigma$ are crystalline measures, so is $\eta$. By construction, $\eta$ is a Fourier eigenmeasure satisfying $\widehat \eta= \lambda \eta$.
Now, let $n\in \NN$ be arbitrary. 
Then, by our choice of $k^{}_n, y^{}_n$ and $t^{}_{n}$, 
we have for every $m>n$ that
\[
|\sigma^{}_{m}|\bigl([y^{}_n-1, y^{}_n+1)\bigr)\, =\, 0 \,.
\]
We thus have
\begin{align*}
\sigma\bigl(\{y^{}_{n}-t^{}_n\}\bigr)&\, =\,  \sum_{k=1}^n a^{}_k\bigl(\tau^{}_{t^{}_k}\sigma^{}_k\bigl(\{y^{}_n-t^{}_{n}\}\bigr) - \sigma^{}_{k}\bigl(\{y^{}_n-t^{}_{n}\}\bigr)\bigr) \\
&\,=\, \sum_{k=1}^n a^{}_k\bigl(\sigma^{}_k\bigl(\{y^{}_n-t^{}_n+t^{}_k\}\bigr)- \sigma^{}_{k}\bigl(\{y^{}_n-t^{}_{n}\}\bigr)\bigr) \,.
\end{align*}
Consider any $k<n$ next. Since $y^{}_n \in \QQ$, $t^{}_{n}\notin \QQ$, $t^{}_{n}-t^{}_{k} \notin \QQ$ and $\supp(\sigma^{}_k) \subset \QQ$, we have
\[
\sigma^{}_k\bigl(\{y^{}_n-t^{}_{n}+t^{}_{k}\}\bigr)=\sigma^{}_k\bigl(\{y^{}_n-t^{}_{n}\}\bigr) \,=\,0  \,.
\]
As $\sigma^{}_{n}\bigl(\{y^{}_n\}\bigr)=1$, we get
\begin{align*}
\sigma(\{y^{}_n-t^{}_{n}\})&\, =\, a^{}_n \ .
\end{align*}
Note also that Equation~\eqref{eq5} gives
\[
\Sigma\bigl(\{ y^{}_{n}-t^{}_{n} \}\bigr) \,= \, \sigma\bigl(\{ -y^{}_{n}+t^{}_n \}\bigr)\, =\, \Sigma\bigl(\{ -y^{}_{n}+t^{}_n \}\bigr) \, =\, 0  \,.
\]
For the measure $\eta$, this translates to
\[
\eta\bigl(\{y^{}_n-t^{}_{n}\}\bigr)\, =\, \sigma\bigl(\{y^{}_n-t^{}_{n}\}\bigr)\, =\,  a^{}_n  \,.
\]
Finally, since $0< t^{}_{n} < \delta^{}_{n}$, Equation~\eqref{eq7} gives 
\[
\left| \widehat{\varphi}(y^{}_n-t^{}_{n}-y) \right| \,>\, c^{}_n \,.
\]
Choose now $\psi(x):= \ee^{2 \pi \ii xy} \ts \varphi(x)$. If $\eta\ast\psi$ were Bohr almost periodic, we could use Lemma~\ref{lem-bohr} for the Fourier--Bohr coefficients of $\eta\ast\psi$ 
and obtain the estimate
\begin{align*}
 \sum_{n\in \NN}  \left|\mathbf{a}_{y^{}_n-t^{}_{n}} (\eta\ast\psi) \right|^2&\, =\, 
 \sum_{n\in \NN} 
 \bigl|\eta(\{y^{}_n-t^{}_{n}\})\bigr|^2 \cdot
 \bigl|\widehat{\psi}(y^{}_n-t^{}_{n}) \bigr|^2
 \\
 &\, =\, \sum_{n\in \NN}\bigl|\eta(\{y^{}_n-t^{}_{n}\})\bigr|^2 \cdot \bigl|\widehat{\varphi}(y^{}_n-t^{}_{n}-y)  
 \bigr|^2 \\
 &\,>\,  \sum_{n\in \NN} \left(a_n \cdot c_n\right)^2 \, =\, \infty \,.
\end{align*}
But this contradicts the finiteness statement in Lemma~\ref{lem-bohr}. Therefore, we conclude that $\eta*\psi \notin SAP(\RR)$, which means that $\eta$ is not strongly almost periodic as a distribution. Also, $\eta$~cannot be translation bounded as a distribution. Indeed, if we assume by contradiction that $\eta$ is translation bounded, Theorem~\ref{prop:M-theorem} implies that $\eta$ is a strongly almost periodic distribution, which we already have ruled out above. As $\eta$ is not translation bounded as a distribution, it fails to be translation bounded as a tempered distribution. By Lemma~\ref{lem:temptb}, this implies that $\eta$ is not strongly tempered.
\end{proof}

\section{Another exotic crystalline measure}\label{sec:NFQC2}

We have argued that a natural subclass of crystalline measures $\mu$ is such that both $\mu$ and $\widehat \mu$ are translation-bounded tempered distributions. We have also argued that this class contains all positive crystalline measures and all Fourier quasicrystals. In this section, we construct a crystalline measure in that class that fails to be a Fourier quasicrystal, using the periodic Fourier eigenmeasures from Lemma~\ref{lem:eigenmesures}. In fact, its Fourier transform is a translation-bounded measure that is even norm almost periodic. Our examples thus illustrate that the realm of crystalline measures provides unexpected phenomena beyond the class of Fourier quasicrystals.  

\smallskip

We first recall a construction by Kahane and Salem \cite{KS56}, which was also employed in this context in Example~4.7 from \cite{BS23}. It enables one to construct a sequence of finite pure-point measures on the real line, such that their total variation grows faster than exponentially and, at the same time, the norm of their Fourier transforms decays exponentially fast. 
To construct such measures, consider first for any positive numbers $a,b$ the finite pure-point measure 
\[
\omega^{}_{a,b} \, := \, \delta^{}_{0} + \delta^{}_{a}+ \delta^{}_{b} - \delta^{}_{a+b} \ .
\]
Its total variation satisfies $\| \omega^{}_{a,b} \| = 4$, and its Fourier transform satisfies $\|\widehat{\omega^{}_{a,b}}\|^{}_{\infty} \leq 2\sqrt{2}$. 

\smallskip

Next, choose sequences $(a^{}_n)^{}_{n\in \NN}$, $(b^{}_n)^{}_{n\in \NN}$ and  $(c^{}_n)^{}_{n\in \NN}$ of numbers in $(0,1)$, such that
\[
0\, <\, a^{}_1, \ldots, a^{}_n, b^{}_1,\ldots, b^{}_n, c^{}_n\, <\,  \myfrac{1}{2n+1}
\]
for every $n\in \NN$, and such that all numbers are irrational and linearly independent over $\QQ$. 
We fix $n\in\NN$ for the moment and consider the $n$-fold convolution measure
\[
{\vartheta}^{}_n \, = \, \bigast_{i=1}^n \omega^{}_{a^{}_{i},b^{}_{i}}\,.  
\]
Note that by linear independence, the finite set 
\[
F^{}_n \,:=\,\supp(\vartheta^{}_n) \, = \,  \Bigl\{ \sum_{i=1}^n \alpha^{}_{i}a^{}_i + \beta^{}_i b^{}_{i} \, : \, \alpha^{}_{i}, \beta^{}_{i} \in \{0,1\}\Bigr\} \,\subset\,  \Bigl[0,1-\myfrac{1}{2n+1}\Bigr) \ .
\]
has $2^{2n}$ elements. Moreover, there exists some $\eps^{}_n : F^{}_n \to \{ -1,1 \}$ such that 
\[
\vartheta^{}_{n} \, =\,  \sum_{x \in F^{}_n} \eps^{}_n(x) \delta^{}_x \,.
\]
It follows that $\| \vartheta^{}_{n} \| = 2^{2n}$ and, by the convolution theorem, that $\| \widehat{\vartheta^{}_{n}} \|^{}_{\infty} \leqslant 2^{\frac{3n}{2}}$.
Now,  for each $m \in \NN$, pick some $n$ such that $2^{\frac{n}{2}} > 2^m(m^2 + 1)^m$, and define the finite pure-point measure
\[ 
\varOmega^{}_{m} \, := \, \myfrac{\delta^{}_{c^{}_n}\ast \vartheta^{}_{n}}{2^m \| \widehat{\vartheta^{}_{n}} \|^{}_{\infty}} \ .
\]
This choice ensures that 
\[\| \varOmega^{}_{m} \| \, \geq \, (m^2 + 1)^m \qquad \mbox{and} \qquad  \| \widehat{\varOmega^{}_{m}} \|^{}_{\infty} \, = \, 2^{-m}\, .  \]
Moreover, we have $\supp(\varOmega^{}_m) \subset [0,1)$ and
\[
|\supp(\varOmega^{}_m)| \,=\,  |\supp(\vartheta^{}_n)| \, = \,  
4^n \,>\, 2^{\frac{n}{2}} \,>\, 2^m(m^2 + 1)^m \,.
\]
A variant of the construction in Remark~\ref{rem:choice_y} {\rm (iv)} now yields the following example of a~crystalline measure $\sigma$ that is not a Fourier quasicrystal. 

\begin{theorem}
\label{thm3.41}
Choose measures $(\varOmega^{}_m)^{}_{m\in \NN}$ as above, periodic eigenmeasures $(\mu^{}_{m})^{}_{m\in \NN}$ to eigenvalue $\lambda$ from Lemma~\ref{lem:eigenmesures}, and define
\[
\sigma\,:= \, \sum_{m=4}^{\infty} \varOmega^{}_{m} \ast \mu^{}_{m}\qquad \mbox{and} \qquad
\Sigma\,:= \,  \lambda \sum_{m=4}^{\infty} \widehat{\varOmega^{}_{m}} \cdot \mu^{}_{m} \ .
\]
Then, both $\sigma$ and $\Sigma$ are pure-point measures having locally finite support, with the following properties:
\begin{itemize}
\item[(a)] $\Sigma$ is translation-bounded as a measure, and hence a strongly tempered measure.
\item[(b)] $\sigma$ is a tempered measure but not a strongly tempered measure.
\item[(c)] As tempered distributions, we have $\widehat{\sigma}= \Sigma$.
In particular, $\sigma$ is Fourier transformable as a measure. 
\item[(d)] $\sigma$ is strongly almost periodic as a tempered distribution. In particular, it is translation bounded as a tempered distribution. 
\item[(e)] $\Sigma$ is norm almost periodic measure. In particular, $\Sigma$ is strongly almost periodic both as a tempered distribution and as a measure. 
\end{itemize}
\end{theorem}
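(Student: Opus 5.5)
The plan is to treat $\sigma$ and $\Sigma$ through the partial sums $\sigma^{}_N=\sum_{m=4}^N \varOmega^{}_m\ast\mu^{}_m$ and $\Sigma^{}_N=\lambda\sum_{m=4}^N \widehat{\varOmega^{}_m}\cdot\mu^{}_m$, in the spirit of the proof of Proposition~\ref{cor1}.

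\emph{Local finiteness and the pure-point property.} Since $\supp(\varOmega^{}_m)\subset[0,1)$ and $\mu^{}_m$ vanishes on $(-\frac m4+1,\frac m4-1)$, both $\varOmega^{}_m\ast\mu^{}_m$ and $\widehat{\varOmega^{}_m}\cdot\mu^{}_m$ vanish on $(-\frac m4+1,\frac m4-2)$. Hence, on each bounded interval, only finitely many summands contribute. Each summand is pure point with locally finite support: $\mu^{}_m$ lives on $\frac1m\ZZ$ and $\varOmega^{}_m$ has finite support. Thus both series converge vaguely to pure-point measures with locally finite support.

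\emph{Part (a).} Use the $(0,1)$-norm together with $|\mu^{}_m|(\{x\})\le1$, so that $\|\mu^{}_m\|^{}_{(0,1)}\le m$. This gives
\[
\|\widehat{\varOmega^{}_m}\cdot\mu^{}_m\|^{}_{(0,1)}\le 2^{-m}m .
\]
The series is therefore absolutely summable in the Banach space of translation-bounded measures, which proves (a). Part (e) follows exactly as in Proposition~\ref{prop:countreex_simple}. Each term is a product of the $m\ZZ$-periodic measure $\mu^{}_m$ with the Bohr almost periodic trigonometric polynomial $\widehat{\varOmega^{}_m}$, and the key step is to show that this product is norm almost periodic. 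An $\varepsilon$-almost period $t\in m\ZZ$ of $\widehat{\varOmega^{}_m}$ gives $\|\widehat{\varOmega^{}_m}\mu^{}_m-\tau^{}_t(\widehat{\varOmega^{}_m}\mu^{}_m)\|^{}_{(0,1)}\le\varepsilon m$, and such $t$ exist relatively densely because $\widehat{\varOmega^{}_m}$ restricted to $m\ZZ$ is almost periodic on that lattice. Norm limits of norm almost periodic measures are again norm almost periodic. Norm almost periodicity then gives strong almost periodicity as a measure, and as a tempered distribution via the Wiener algebra remark.

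\emph{Parts (c) and (d).} For (c), I would show that $\sigma^{}_N$ is Cauchy in $\cS'$. For $f\in\cS(\RR)$ write
\[
(\varOmega^{}_m\ast\mu^{}_m)(f)=\mu^{}_m(\varOmega^{\dagger}_m\ast f)=\lambda^{-1}\widehat{\mu^{}_m}(\ldots).
\]
The cleaner route is Parseval with the eigenmeasure property:
\[
(\varOmega^{}_m\ast\mu^{}_m)(f)=\bigl(\widehat{\varOmega^{}_m}\,\widehat{\mu^{}_m}\bigr)(\widecheck f)=\lambda\,\mu^{}_m\bigl(\widehat{\varOmega^{}_m}\,\widecheck f\bigr).
\]
Since $\mu^{}_m$ is supported outside $(-\frac m4+1,\frac m4-1)$ with weights at most $1$ on $\frac1m\ZZ$, this is bounded by $2^{-m}\, m\,\sum_{|x|\ge m/4-1,\,x\in\frac1m\ZZ}|\widecheck f(x)|$, which is summable in $m$ and controlled by a Schwartz seminorm. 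Thus $\sigma^{}_N\to T$ in $\cS'$ with $\widehat{T}$ the $\cS'$-limit of $\Sigma^{}_N$. Vague limits agree on $\Ccinf$, as in part (d) of the proof of Proposition~\ref{cor1}, so $\sigma$ is tempered and $\widehat\sigma=\Sigma$. Part (d) then follows from Lemma~\ref{lem:temptb} and Lemma~\ref{lem:STimpliesTB} applied to the pure-point, strongly tempered measure $\Sigma$ (translation bounded by (a)). Its Fourier transform $\sigma^\dagger$ is then strongly almost periodic in $\cS'$, and hence so is $\sigma$.

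\emph{Part (b), the main obstacle.} Here I must show that $|\sigma|(B^{}_r(0))$ is not polynomially bounded, using Lemma~\ref{lem:st}(iii). Pick $x^{}_m$ with $\mu^{}_m(\{x^{}_m\})=1$ in a fundamental domain. Translating by $m\ZZ$, place $x^{}_m$ in the window $[\frac m4,\frac m4+m]$, beyond the gap, so $|x^{}_m|\le 2m$. Near $x^{}_m$, the measure $\varOmega^{}_m\ast\mu^{}_m$ contains $\delta^{}_{x^{}_m}\ast\varOmega^{}_m$, whose total mass is at least $(m^2+1)^m$.

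Two issues must then be handled. First, other points of $\frac1m\ZZ$ could cause cancellation inside $x^{}_m+[0,1)$. This cannot happen: the support of $\varOmega^{}_m$ consists of $c^{}_n$ plus combinations of the $a^{}_i,b^{}_i$, which are $\QQ$-linearly independent irrationals. Hence the translates $q+\supp(\varOmega^{}_m)$, $q\in\QQ$, are pairwise disjoint. For the same reason, terms with different $m'$ use distinct $c^{}_{n'}$ and cannot cancel either, provided the chosen indices $n$ are distinct and increasing in $m$.

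Consequently
\[
|\sigma|\bigl(B^{}_{2m+1}(0)\bigr)\ \ge\ (m^2+1)^m ,
\]
which grows faster than any power of $m$. This contradicts polynomial growth, so $\sigma$ is not strongly tempered. I expect verifying this disjointness bookkeeping carefully to be the delicate point of the proof.
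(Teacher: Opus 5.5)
Apart from one step, your argument is the paper's. You use the same partial sums and the same $(0,1)$-norm bound $\sum_m 2^{-m}m$ for (a). You get (d) from Lemma~\ref{lem:STimpliesTB} applied to $\Sigma$, which is what Corollary~\ref{theo:Fav24b} amounts to. For (b) you use the same disjointness of the sets $\supp(\varOmega_m)+\QQ$. In (b) you also use the correct quantity: on $x_m+\supp(\varOmega_m)$ the measure $\sigma$ equals $\tau_{x_m}\varOmega_m$, whose weights are $\pm\bigl(2^m\|\widehat{\vartheta_n}\|_\infty\bigr)^{-1}$ rather than $\pm1$, so the lower bound is $\|\varOmega_m\|\geq(m^2+1)^m$. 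The paper's text writes $\pm1$ and $|S_m|$ there. Your proviso that the indices $n=n(m)$ be distinct is what makes the cross-term disjointness work. In (c) you prove the $\cS'$-Cauchy property directly from $(\varOmega_m*\mu_m)(f)=\lambda\,\mu_m(\widehat{\varOmega_m}\,\widecheck f)$. The paper instead gets $\Sigma_M\to\Sigma$ in $\cS'$ from the norm convergence in (a). Both routes work.

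The gap is in (e). You show that each summand $\widehat{\varOmega_m}\cdot\mu_m$ is norm almost periodic and then pass to $\Sigma$ by norm limits. That requires the partial sums $\Sigma_M$ to be norm almost periodic, and norm almost periodicity is not stable under addition. For example, $\delta_\ZZ$ and $\delta_{\sqrt2\ZZ}$ are periodic, yet $\rho=\delta_\ZZ+\delta_{\sqrt2\ZZ}$ satisfies $\|\rho-\tau_t\rho\|_{(0,1)}\geq1$ for every $t\neq0$.

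What you need is a common relatively dense set of almost periods, and your own method supplies it. All $\mu_m$ with $m\leq M$ are $M!\ZZ$-periodic. The set of $t\in M!\ZZ$ with $|\ee^{2\pi\ii st}-1|<\delta$ for all $s\in\bigcup_{m\leq M}\supp(\varOmega_m)$ is a Bohr neighbourhood of $0$ in $M!\ZZ$, hence relatively dense. For such $t$ you have $\|\widehat{\varOmega_m}-\tau_t\widehat{\varOmega_m}\|_\infty\leq\delta\,\|\varOmega_m\|$ uniformly on $\RR$. This uniform bound, not almost periodicity of the restriction of $\widehat{\varOmega_m}$ to $m\ZZ$, is also what your single-term argument needs, since $\mu_m$ lives on $\frac{1}{m}\ZZ$. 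Consequently $\|\Sigma_M-\tau_t\Sigma_M\|_{(0,1)}\leq\delta\sum_{m\leq M}m\,\|\varOmega_m\|$, which is small for fixed $M$.

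The paper handles this step differently. Strong almost periodicity is a linear property, so $\Sigma_M$ is a strongly almost periodic measure. Being supported in the single lattice $\frac{1}{M!}\ZZ$, it is then norm almost periodic.
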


\begin{remark}
For the measure $\sigma$ in the above theorem, set $\eta=\sigma+ \cF(\sigma)+\cF^2(\sigma)+\cF(\sigma^3)$. Then, $\eta$ is a crystalline measure that is a Fourier eigenmeasure to eigenvalue $\lambda$. It is a~tempered measure that is not strongly tempered, and also translation unbounded as a~measure. On the other hand, it is translation bounded as a tempered distribution. It thus falls into the same class as Guinand's measure with respect to its almost periodicity, but it is not a Fourier quasicrystal. \exend
\end{remark}

\begin{proof}
We will proceed similarly to the proof of Proposition~\ref{cor1}. For each $M\in \NN$, define the approximants 
\begin{equation}
\label{eq:sNSN}
\sigma^{}_{M}\,:=\, \sum_{m=1}^{M} \varOmega^{}_{m} \ast  \mu^{}_{m} \qquad \mbox{and} \qquad
\Sigma^{}_{M} \,:=\,  \lambda \sum_{m=1}^{M} \widehat{\varOmega^{}_{m}} \cdot \mu^{}_{m} \ ,
\end{equation}
where we have set $\mu^{}_m=0$ for $m\leq 3$ in order to simplify notation.
Then, both $\sigma^{}_{M}$ and $\Sigma^{}_{M}$ are translation-bounded pure-point measures having uniformly discrete support. Also, for each $R>0$ there exists some $M^{}_0$ so that for all $m>M^{}_0$ the restrictions of $\varOmega^{}_{m}\ast \mu^{}_{m}$ and $\widehat{\varOmega^{}_{m}} \cdot \mu^{}_{m}$ to $[-R,R\ts ]$ are both zero. It follows immediately that $\sigma^{}_{M}$ and $\Sigma^{}_{M}$ converge vaguely to some pure-point measures $\sigma', \Sigma'$ with locally finite support. Moreover, for each $\varphi \in \Cc(\RR)$, we have
\[
\sigma'(\varphi)\,= \, \sum_{m=1}^{\infty} \bigl(\varOmega^{}_{m} \ast \mu^{}_{m}\bigr)(\varphi) \qquad \mbox{and} \qquad
\Sigma'(\varphi)\, =\,  \lambda \sum_{m=1}^{\infty} \bigl( \widehat{\varOmega^{}_{m}} \cdot \mu^{}_{m}\bigr)(\varphi) \ ,
\]
with the sums on the right-hand side being non-zero for only finitely many terms. Thus $\sigma=\sigma'$ and $\Sigma=\Sigma'$, which proves the first claim. We now prove the stated properties of $\sigma$ and $\Sigma$.

\noindent \textbf{Ad (a):} 
Translation boundedness of $\Sigma$ follows from the estimate (analogous to the one in the proof of Proposition~\ref{prop:countreex_simple})
\[
\|\Sigma\|^{}_{(0,1)}\,\leq\, \sum_{m=1}^\infty \|\widehat{\varOmega^{}_{m}} \cdot \mu^{}_{m}  \|^{}_{(0,1)}
\, \leq \, \sum_{m=1}^\infty \|\widehat{\varOmega^{}_{m}}\|^{}_\infty \cdot \|\mu^{}_{m}  \|^{}_{(0,1)} \, \leq \, \sum_{m=1}^\infty 2^{-m} m \,=\,2 \, .
\]
The strong temperedness then directly follows, as discussed prior to Lemma~\ref{lem:tbchar}, proving~(a). And as a by-product, we obtain norm-convergence of the approximants
\begin{equation}
\label{eq2}
\lim_{M \to \infty} \| \Sigma-\Sigma^{}_{M} \|^{}_{(0,1)} \, = \, 0 \,.
\end{equation}

\noindent \textbf{Ad (b) and (c):} We show that the measure $\sigma$ is tempered. Since $\Sigma$ is a translation-bounded measure, it can be identified with a tempered distribution. Therefore, there exists a tempered distribution $T \in \cS(\RR)$ such that $\widehat{T}=\Sigma$.
Further, note that Equation~\eqref{eq2} implies $\Sigma^{}_{M} \to \Sigma$ in the topology of tempered distributions (for the corresponding estimate, see for example \cite[Lemma~4.5]{SS21}). 
Finally, the Fourier transform of the tempered measure $\sigma^{}_{M}$ is given by
\[
\reallywidehat{\sigma^{}_{M}}\, =\,  \sum_{m=1}^{M} \reallywidehat{ \varOmega^{}_{m} \ast \mu^{}_{m}} \,= \, \lambda  \sum_{m=1}^{M} \widehat{\varOmega^{}_{m}} \cdot \mu^{}_{m} \,=\,  \Sigma^{}_{M} \,.
\]
Therefore, we have for all $\varphi \in \Ccinf(\RR)$ that
\[
\sigma(\varphi)\, =\,  \lim_{M \to \infty} \sigma^{}_{M}(\varphi)\, =\,  \lim_{M \to \infty}  \Sigma^{}_{M}(\widecheck{\varphi}) \,=\,   \Sigma(\widecheck{\varphi})\,=\, T(\varphi) \,.
\]
Since $T$ is a tempered distribution, it follows that the measure $\sigma$ is tempered. Moreover, we have $\widehat{\sigma}=\widehat{T}= \Sigma$ in the tempered measure sense, which proves (c).

We now show that the measure $\sigma$ is not strongly tempered. Observe that our choice of $(a^{}_n)^{}_{n\in \NN}$, $(b^{}_n)^{}_{n\in \NN}$ and $(c^{}_n)^{}_{n\in \NN}$ implies for $m \neq n$ that 
\[
\bigl(\supp(\varOmega^{}_m)+ \QQ\bigr) \, \cap \,  \bigl(\supp(\varOmega^{}_n)+\QQ\bigr)\, =\, \varnothing \,.
\]
Thus, the pure-point measures $(\varOmega^{}_{m}\ast \mu^{}_{m})^{}_{m\in \NN}$ have pairwise disjoint supports, that is, we have for all $m \neq n$ that 
\begin{equation}
\label{eq6}
\supp( \varOmega^{}_{m}\ast \mu^{}_{m}) \, \cap \, \supp(\varOmega^{}_{n} \ast \mu^{}_{n} )\, =\,  \varnothing \,.
\end{equation}
Next, take a sequence $(y^{}_m)^{}_{m\in \NN}$ such that $0\leq  y^{}_m \leq m$ and 
$\mu^{}_m\bigl(\{y^{}_m\}\bigr) = 1$ for all $m\in \NN$. Abbreviate $S^{}_m=\supp(\varOmega_m)$ for $m\in \NN$.
By our choice of $(a^{}_n)^{}_{n\in \NN}$, $(b^{}_n)^{}_{n\in \NN}$ and $(c^{}_n)^{}_{n\in \NN}$, we have 
\[
(\varOmega^{}_m \ast \mu^{}_m)\bigl(\{x+y^{}_m\}\bigr)\,=\, \pm 1 
\]
for all $x \in S^{}_m$. Moreover, for all $n \neq m$, we have 
\[
(\varOmega^{}_n \ast \mu^{}_n)\bigl(\{x+y^{}_m\}\bigr)\,=\, 0
\] 
for all $x \in S^{}_m$ by Equation~\eqref{eq6}.
Since $S^{}_m\subset [0,1)$ for all $m$, we thus have 
\[
|\sigma|\bigl(B^{}_{m+1}(0)\bigr)
\, \geq \,
\sum_{x \in S^{}_m}\left|\mu^{}_m*\varOmega^{}_m\right|\bigl(\{x+y^{}_m\}\bigr)  \,= \, |S^{}_m| \, >\,  2^m(m+1)^2 \,. 
\]
This shows that $\mu$ is not slowly increasing, and hence not strongly tempered, which proves~(b).

\noindent \textbf{Ad (d):} The claim is immediate. Indeed, since $\widehat{\sigma}=\Sigma$ is a strongly tempered pure-point measure, we obtain that $\sigma$ is a strongly almost periodic tempered distribution from  Corollary~\ref{theo:Fav24b}.

\noindent \textbf{Ad (e):} We argue as in the proof of Proposition~\ref{prop:countreex_simple}. For each $m\in \NN$, the measure $\mu^{}_m$ is fully periodic with lattice support. Next, $\widehat{\varOmega^{}_{m}}$ is a trigonometric polynomial and hence a~Bohr almost periodic function. Therefore, $\widehat{\varOmega^{}_{m}} \cdot \mu^{}_m$ is a strongly almost periodic measure \cite[Thm.~6.1]{ARMA}. Then also the linear combination $\Sigma^{}_M$ in Equation~\eqref{eq:sNSN}
is a strongly almost periodic measure supported inside the lattice $\frac{1}{N!}\ZZ$ and hence norm almost periodic. Then, by Equation~\eqref{eq2}, the measure $\Sigma$ inherits norm almost periodicity from $(\Sigma^{}_M)^{}_{M\in \NN}$. 
In particular, $\Sigma$ is a~strongly almost periodic measure \cite[Lem.~7]{BM04}.
Furthermore, Lemma~\ref{lem:TBMimpTBTD} implies that $\Sigma$ is also a~strongly almost periodic tempered distribution.
\end{proof}

\section*{Acknowledgements}
The authors greatly acknowledge the hospitality of the Mathematisches Forschungsinstitut Oberwolfach, where the major part of this paper was written during their joint stays in March 2026. J.M. was supported by MFO as the Leibniz Fellow (Project Nr 2603q). J.M. also acknowledges support from the Deutsche Forschungsgemeinschaft (DFG, German Research Foundation), through grant MA~12254/1 (Project Nr 579756950). N.S. was supported by the NSERC with Discovery grant 2024-04853.

\end{document}